\pgfplotsset{compat=newest}
\newcolumntype{C}[1]{>{\centering\let\newline\\\arraybackslash\hspace{0pt}}m{#1}}
\newcommand{\R}{\mathbb R}
\newcommand{\bA}{\mathbf A}
\newcommand{\bB}{\mathbf B}
\newcommand{\bC}{\mathbf C}
\newcommand{\bH}{\mathbf H}
\newcommand{\bI}{\mathbf I}
\newcommand{\bP}{\mathbf P}
\newcommand{\bM}{\mathbf M}
\newcommand{\bS}{\mathbf S}
\newcommand{\bV}{\mathbf V}
\newcommand{\bn}{\mathbf n}
\newcommand{\be}{\mathbf e}
\newcommand{\bu}{\mathbf u}
\newcommand{\bU}{\mathbf U}
\newcommand{\bv}{\mathbf v}
\newcommand{\bw}{\mathbf w}
\newcommand{\bz}{\mathbf z}
\newcommand{\bbf}{\mathbf f}
\newcommand{\T}{\mathcal T}
\newcommand{\tr}{{\rm tr}}
\newcommand{\Div}{\mathop{\rm div}}
\newcommand{\divG}{{\mathop{\,\rm div}}_{\Gamma}}
\newcommand{\divGh}{{\mathop{\,\rm div}}_{\Gamma_h}}
\newcommand{\gradG}{\nabla_{\Gamma}}
\newcommand{\gradGh}{\nabla_{\Gamma_h}}
\newcommand{\OGamma}{\Omega^\Gamma_h}
\newcommand{\Gammalin}{\Gamma^{\rm lin}}
\newcommand{\cT}{\mathcal T}
\newcommand{\vect}[1]{\boldsymbol{\mathbf{#1}}}
\newcommand{\Plin}{\mathbf{P}_{\rm lin}}
\newcommand*\diff{\mathop{}\!\mathrm{d}}
\newcommand{\includegraphicsw}[2][1.]{\includegraphics[width=#1\linewidth]{#2}}
\DeclareMathOperator{\Curl}{curl}
\DeclareMathOperator{\vCurl}{\vect{curl}}
\DeclareMathOperator{\Tr}{tr}
\newcommand{\LTwoSpace}[1][\Gamma]{{L^2\left({#1}\right)}}
\newcommand{\LInfSpace}[1][\Gamma]{{L^\infty\left({#1}\right)}}
\newcommand{\KinEn}{\mathcal{E}}
\numberwithin{equation}{section}
\begin{document}

\title{Error analysis of higher order trace finite element methods for the surface Stokes equations}
\author{Thomas Jankuhn\thanks{Institut f\"ur Geometrie und Praktische  Mathematik, RWTH-Aachen
University, D-52056 Aachen, Germany (jankuhn@igpm.rwth-aachen.de)}
\and Maxim A. Olshanskii\thanks{Department of Mathematics, University of Houston, Houston, Texas 77204 (molshan@math.uh.edu)}
\and Arnold Reusken\thanks{Institut f\"ur Geometrie und Praktische  Mathematik, RWTH-Aachen
University, D-52056 Aachen, Germany (reusken@igpm.rwth-aachen.de).}
\and Alexander Zhiliakov\thanks{Department of Mathematics, University of Houston, Houston, Texas 77204 (alex@math.uh.edu)}
}
\maketitle

\begin{abstract} The paper studies a higher order unfitted finite element method for the Stokes system posed on a surface in $\mathbb{R}^3$. The method employs parametric $\mathbf{P}_{k}$--$P_{k-1}$ finite element pairs on tetrahedral bulk mesh to discretize the Stokes system on embedded surface. Stability and optimal order convergence results are proved. The proofs include a complete quantification of geometric errors stemming from approximate parametric representation of the surface.
Numerical experiments include formal convergence studies and an example of the Kelvin--Helmholtz instability problem on the unit sphere.
\end{abstract}
\begin{keywords}
	Surface Stokes equations; Trace finite element method; Taylor--Hood finite elements
\end{keywords}
\section{Introduction}
Fluid equations posed on manifolds arise in continuum based  models of thin material layers with lateral viscosity such as lipid monolayers and plasma membranes~\cite{GurtinMurdoch75,arroyo2009,rangamani2013interaction,torres2019modelling}. Beyond biological sciences, fluid equations  on  surfaces appear in the literature on modeling of foams,  emulsions and liquid crystals; see, e.g., \cite{scriven1960dynamics,slattery2007interfacial,fuller2012complex,brenner2013interfacial,rahimi2013curved,nitschke2019hydrodynamic}.
Despite the apparent practical and  mathematical relevance, such systems have received little attention  from the scientific computing community until the very recent series of publications \cite{nitschke2012finite,Jankuhn1,reusken2018stream, reuther2018solving,fries2018higher,olshanskii2018finite,olshanskii2019penalty,nitschke2019hydrodynamic,gross2019meshfree,bonito2019divergence,jankuhn2019higher,olshanskii2019inf,brandner2019finite,lederer2019divergence} that evidences a strongly growing interest  in the development and analysis of numerical methods for  fluid equations posed on surfaces.

Discretization of fluid systems on manifolds brings up several difficulties in addition to those well-known for equations posed in Euclidian domains. First, one has to approximate covariant derivatives.
Another difficulty stems from the need to recover a \emph{tangential} velocity field on a surface $\Gamma$.  It is not straightforward to build a finite element method (FEM), which is conformal with respect to this tangentiality condition.
Two natural ways to enforce the condition in the numerical setting are either to use Lagrange multipliers or add a penalty term to the weak variational formulation. Next, one has to deal with geometric errors originating from approximation of $\Gamma$ by a ``discrete''  surface $\Gamma_h$ or, more general, from inexact integration  over $\Gamma$.

Among recent publications, Ref.~\cite{reuther2018solving,fries2018higher} applied surface  FEMs to discretize the incompressible surface Navier--Stokes equations in primitive variables on stationary triangulated  manifolds.
In \cite{reuther2018solving}, the authors considered $\vect P_1$--$P_1$ finite elements without pressure stabilization
and with a  penalty technique to force the flow field to be approximately tangential to the surface. In \cite{fries2018higher}, instead, surface Taylor--Hood elements are used and combined with  a Lagrange multiplier method to satisfy the tangentiality constraint.  Divergence-free DG and H(div)-conforming finite element methods for the surface Stokes problem were recently introduced in \cite{lederer2019divergence,bonito2019divergence}. These methods enforce the tangentiality condition strongly.  In~\cite{gross2019meshfree} the authors suggest meshfree methods for hydrodynamic equations on steady curved surfaces. In  \cite{Sahu2020,Sanchez2019} special surface parametrizations are used, such that penalty and Lagrange multiplier  techniques for treating the tangential constraint can be avoided. Finally, yet another approach was taken in  \cite{nitschke2012finite,reusken2018stream},  where the governing equations were written in vorticity--stream function variables and surface finite element techniques available for scalar equations are applied. None of these references address the numerical analysis of  the discretization method.

First stability and error analyses of finite element formulations for the surface Stokes problem are presented only in very recent papers \cite{bonito2019divergence,olshanskii2018finite,olshanskii2019inf,brandner2019finite}.
The authors of \cite{bonito2019divergence} present an analysis of the lowest-order Brezzi-Douglas-Marini H(div)-conforming finite element. 
The surface Stokes problem  is discretized using unfitted stabilized $\mathbf{P}_1$--$P_1$ elements in \cite{olshanskii2018finite}, and
 the trace FEM with $\mathbf{P}_2$--$P_1$ bulk elements has been considered in \cite{olshanskii2019inf}. Both papers \cite{olshanskii2018finite,olshanskii2019inf} give a full convergence analysis, but assume exact numerical integration over the  surface.  In \cite{brandner2019finite} a convergence analysis of a surface finite element based on the
vorticity--stream function variables is presented.
In none of these papers on an unfitted FEM for  surface Stokes-type systems error bounds  including geometric consistency estimates are derived.

We consider a mixed trace FEM for the surface Stokes in pressure-velocity variables on a given smooth surface $\Gamma$ without boundary. In the trace FEM, polynomial functions defined on an ambient (bulk) mesh are used to set up trial and test spaces \cite{OlshReusken08,olshanskii2016trace}. For these bulk finite element spaces we shall consider the generalized Taylor--Hood elements ($\mathbf{P}_{k}$--$P_{k-1}$, $k\ge2$, elements on tetrahedra), which is known to be inf-sup stable in the bulk. To ensure that the  geometric error is consistent with the polynomial interpolation error, we employ a parametric version~\cite{lehrenfeld2016high,grande2017higher} of the trace finite elements. A penalty method is used to (approximately) satisfy the tangentiality constraint.
To approximate the tangential gradient and handle covariant derivatives, the method exploits the embedding of  $\Gamma$ in $\R^3$   and makes use of tangential differential calculus.
This allows us to avoid the use of intrinsic variables on a surface and makes implementation of the numerical method relatively straightforward.

The paper presents a stability  and convergence analysis, which accounts for both interpolation \emph{and geometric errors}. The analysis is not straightforward, since the uniform (with the respect of the surface position in the background mesh) inf-sup stability of trace spaces does not follow in any direct way from the stability of the bulk mixed elements. By quantifying geometric errors and extending results from~\cite{olshanskii2019inf}, we prove such an inf-sup stability condition for $\mathbf{P}_{k}$--$P_{k-1}$ elements, for \emph{arbitry} $k \geq 2$.  With the help of the stability result and  geometric consistency estimates  derived for a vector-Laplace problem in  \cite{jankuhn2019trace} we further derive FE error estimates  in a surface energy norm. The  error bound that we derive is optimal with respect to $h$ and uniform with respect to the position of the surface approximation $\Gamma_h$ in a background mesh.

Summarizing, the main contributions of this paper are: 1. we extend the analysis from \cite{olshanskii2019inf} (for $k=2$) to \emph{higher order Taylor--Hood elements} ($k \geq 2$); 2. we prove stability and optimal order error estimates \emph{including the effect of geometric errors}.  Results of extensive numerical experiments with the parametric unfitted finite element that we analyze in this paper are given in~\cite{jankuhn2019higher}. These results confirm the optimal convergence orders of the trace generalized Taylor--Hood elements.
We give a further numerical assessment of the entire approach in terms of eigenvalue computations and an application with a surface Navier--Stokes equations with a high Reynolds number.

The remainder of the paper is organized as follows. In section~\ref{sectioncont} we recall some basics of tangential differential calculus and formulate the surface Stokes system, our problem of interest.
In section~\ref{sectparametric} parametric trace finite element spaces are explained together with there properties necessary for further analysis. The finite element discretization of the surface Stokes system  is given in section~\ref{sectFEmethods}. Its well-posedness is analyzed in section~\ref{sectwellposed}. In the subsection~\ref{sectstab} we prove one of our key results concerning inf-sup stability of the velocity--pressure FE spaces. We proceed with the error analysis in section~\ref{sectAnalysis}. It includes a complete quantification of the geometric error, which makes it rather technical. Section~\ref{sectNumer} contains resuts of
numerical experiments illustrating certain properties of the method.

\section{Surface Stokes problem} \label{sectioncont}
Consider  a   smooth  hypersurface $\Gamma\subset \mathbb{R}^3$, which is connected, closed and compact. 
We further assume the implicit representation of $\Gamma$ as the zero level of a smooth level set function $\phi \colon U_\delta \to \mathbb{R}$, i.e.
\[
\Gamma = \{ x \in \Omega \mid \phi(x) =0 \}\quad\text{and}\quad |\nabla \phi(x)|\ge c_0>0
\]
for all  $x$ in $U_\delta$, a tubular $\delta$-neighborhood of $\Gamma$.
 We assume $\delta>0$ to be sufficiently small such that  for any $x \in U_{\delta}$ the following quantities are well defined: $d(x)$ the smooth signed distance function to $\Gamma$, negative in the interior of $\Gamma$; $\bn(x) = \nabla d(x)$, the extension of the outward normal vector on $\Gamma$;  $\bH(x) = \nabla^2d(x)$, the Weingarten map;  $\bP(x):= \bI - \bn(x)\bn(x)^T$, the orthogonal projection onto the tangential plane; and  $p(x) = x - d(x)\bn(x)$, the closest point mapping from $U_\delta$ on $\Gamma$.

 We associate any scalar or vector function $g$ on $\Gamma$  with its normal extension in $U_{\delta}$ defined as $g^e(x) := g(p(x))$, $x \in U_{\delta}$.
The Sobolev norms of the normal extension $g^e$ on any $\epsilon$-neighborhood,   $\mathcal{O}_\epsilon= \left\lbrace x \in \mathbb{R}^3 \mid \vert d(x) \vert < \epsilon\right\rbrace$, $0<\epsilon\le\delta$   are estimated by the corresponding norms on $\Gamma$~\cite{reusken2015analysis} as
\begin{equation} \label{lemmasobolevnormsneighborhood}
\Vert D^\mu g^e \Vert_{L^2(\mathcal{O}_\epsilon)} \lesssim \epsilon^{\frac{1}{2}} \Vert g \Vert_{H^m(\Gamma)} \qquad \text{for all} ~ g \in H^m(\Gamma), \, \vert \mu \vert \leq m.
\end{equation}
We shall skip the superscript and use the same notation for a function and its extension, if no confusion arises.
For a scalar field $\psi$, a vector field $\bu$ on $\Gamma$ and tensor field $\bA: \Gamma \to \Bbb{R}^{n\times n}$, one then can define the surface gradient,  divergence, covariant gradient, the surface rate-of-strain tensor (see \cite{GurtinMurdoch75}):
\[
\gradG \psi=\bP\nabla \psi,\quad \divG\bu=\mbox{tr}(\bP\nabla\bu), \quad
\gradG\bu=\bP(\nabla \bu)\bP,\quad E(\bu):= \frac{1}{2} \left( \gradG \bu + \gradG^T \bu \right),
\]
and the surface  divergence operator $\divG \bA   := \left( \divG (\be_1^T \bA),\,
               \dots, \,
               \divG (\be_n^T \bA)\right)^T$.

The \emph{surface Stokes problem} reads: For a given force vector $\bbf \in L^2(\Gamma)^3$, with $\bbf \cdot \bn =0$, and a source term $g \in L^2(\Gamma)$, with $\int_\Gamma g\, ds =0$, solve
\begin{equation} \label{eqstrong} \begin{split}
- \bP \divG (E(\bu)) + \bu + \gradG p &= \bbf \qquad \text{on } \Gamma,  \\
\divG \bu &= g \qquad \text{on } \Gamma,
\end{split}
\end{equation}
for a tangential velocity field $\bu \colon \Gamma \to \R^3$, $\bu \cdot \bn = 0$, and surface pressure $p \colon \Gamma \to \R$ with $\int_\Gamma p\, ds =0$.
We added the zero order term  to avoid technical details related to the kernel of the strain tensor $E$ (the so-called Killing vector fields).

For the weak formulation of \eqref{eqstrong}, we need the surface Sobolev space
\begin{equation} \label{eqdefH1}
\begin{gathered}
\bV:= H^1(\Gamma)^3, \quad \text{with} ~ \Vert \bu \Vert_{H^1(\Gamma)}^2 := \int_{\Gamma} \Vert \bu(s) \Vert_2^2 + \Vert \nabla \bu(s) \Vert_2^2 \, ds,
\end{gathered}
\end{equation}
and the  subspace of \emph{tangential} vector fields,
$ 
\bV_T := \left\lbrace \bu \in \bV \mid \bu \cdot \bn =0 \right\rbrace.
$ 
For the orthogonal decomposition of $\bv\in\bV$  into a tangential and a normal part, we use the notation:
$
\bv =  \bv_T + v_N\bn,
$
with $\bv_T=\bP \bv$ and  $v_N=\bv \cdot \bn$.
For $\textbf{u}, \textbf{v} \in \bV$ and $q \in L^2(\Gamma$) consider the bilinear forms
\[
a(\textbf{u}, \textbf{v}) := \int_\Gamma E(\bu) : E(\bv) \, ds + \int_\Gamma \textbf{u} \cdot \textbf{v} \, ds,\quad
b(\textbf{u}, q) := - \int_\Gamma q \divG \bu \, ds,
\]
and the following weak formulation of \eqref{eqstrong}: Find $(\bu, p)  \in \bV_T \times L^2_0(\Gamma)$ such that
\begin{equation} \label{contform} \tag{C} \begin{split}
a(\bu, \bv) + b(\bv, p) &= (\bbf, \bv)_{L^2(\Gamma)} ~~~ \text{for all}~ \bv \in \bV_T, \\
b(\bu,q) &= (-g,q)_{L^2(\Gamma)} ~~~ \text{for all}~ q \in L^2(\Gamma).
\end{split}
\end{equation}
The bilinear form $a(\cdot,\cdot)$ is continuous on $\bV$, and hence on $\bV_T$. The ellipticity of $a(\cdot,\cdot)$ on $\bV_T$ follows from the following surface Korn inequality that holds if $\Gamma$ is $C^2$ smooth (cf., (4.8) in \cite{Jankuhn1}):
 There exists a constant $c_K > 0$ such that
\begin{equation} \label{Korn}
\Vert \bu \Vert_{L^2(\Gamma)} + \Vert E(\bu) \Vert_{L^2(\Gamma)} \geq c_K \Vert \bu \Vert_{H^1(\Gamma)} \qquad \text{for all } \bu \in \bV_T.
\end{equation}
The bilinear form $b(\cdot,\cdot)$ is continuous on $\bV_T \times L_0^2(\Gamma)$ and satisfies the following inf-sup condition (Lemma 4.2 in \cite{Jankuhn1}):
There exists a constant $c_0>0$ such that estimate
\begin{equation}\label{LBB}
\inf_{p \in L^2_0(\Gamma)} \sup_{\bv \in \bV_T} \frac{b(\bv, p)}{\Vert \bv\Vert_{H^1(\Gamma)} \Vert p \Vert_{L^2(\Gamma)}} \geq c_0,
\end{equation}
holds.
Hence, the weak formulation \eqref{contform} is \emph{a well-posed problem}. Its unique solution is denoted by $(\bu^*,p^*)$.

\section{Parametric finite element spaces for high order surface approximation} \label{sectparametric}
Let $\{ \mathcal{T}_h \}_{h>0}$ be a family of shape regular tetrahedral triangulations of a polygonal domain $\Omega \subset \mathbb{R}^3$ that contains the surface $\Gamma$.
By $V_h^k$ we denote the standard finite element space of continuous piecewise polynomials of degree $k$. Denote by $I^k$ the nodal interpolation operator from $C(\overline{\Omega})$ to $V_h^k$.
In the original trace FEM introduced in~\cite{OlshReusken08} and analyzed for higher order elements in \cite{reusken2015analysis}, one uses the traces of functions from $V_h^k$ on $\Gamma_h \approx \Gamma$ to define trial and test FE spaces.
For a higher order finite element method, geometrical consistency order dictates that $\Gamma_h$ should be a sufficiently accurate approximation of  $\Gamma$. The latter poses the challenge of efficient numerical integration over the surface $\Gamma_h$, which is often defined implicitly, e.g.  as the zero level set of a higher order polynomial. We avoid this difficulty by using the \emph{parametric} trace FE approach as in \cite{grande2017higher, jankuhn2019higher}, which we outline below. 

Consider a FE level set function  $\phi_h \in V_h^{k}$ approximating  $\phi$ in the following sense:
\begin{equation} \label{phibound}
\max_{T \in \mathcal{T}_h} \vert \phi_h - \phi \vert_{W^{l,\infty}(T\cap U_\delta)} \leq c h^{k+1-l}, \quad 0\leq l \leq k+1,
\end{equation}
is satisfied.
Here, $\vert \cdot \vert_{W^{l,\infty}(T\cap U_\delta)}$ denotes the usual semi-norm on the Sobolev space $W^{l,\infty}(T\cap U_\delta)$ and the constant $c$ depends on $\phi$ but is independent of $h$. The zero level set of  $\phi_h$ \emph{implicitly} characterizes an approximation of the interface, i.e. for $k \geq 2$ no parametrization of this set is available  for  integration purposes. An \emph{easy to compute} piecewise-planar approximation of $\Gamma$ is provided by $\hat{\phi}_h = I^1\phi_h$:
\begin{equation*}
\Gamma^{\text{lin}} := \{ x \in \Omega \mid \hat{\phi}_h (x) = 0\}.
\end{equation*}
 Using $\Gamma^{\text{lin}}$ alone, however, limits the accuracy to second order. Hence one constructs a transformation of the bulk  mesh in $\Omega_h^\Gamma={\rm int}(\cup_{T\in\T^\Gamma_h}\overline{T})$, $\T^\Gamma_h=\{T\in \T_h\,|\,T\cap \Gamma^{\rm lin}\neq\emptyset\}$, with the help of an explicit mapping $\Theta_h$ parameterized by a finite element function, i.e., $\Theta_h\in  \big({V_h^{k}}_{|\OGamma}\big)^3$. The mapping $\Theta_h$ is such that $\Gamma^{\text{lin}}$ is mapped approximately to $\Gamma$; see \cite{grande2017higher,lehrenfeld2016high} for how $\Theta_h$ is constructed.
Hence, the parametric mapping $\Theta_h$ indeed yields a higher order, yet computable, surface approximation
\begin{equation*}
\Gamma_h := \Theta_h(\Gamma^{\text{lin}}) = \left\lbrace x \mid \hat{\phi}_h(\Theta_h^{-1}(x)) = 0 \right\rbrace.
\end{equation*}
In \cite{lehrenfeld2017analysis} it is shown that under reasonable smoothness assumptions the estimate
\begin{equation} \label{distres}
  {\rm dist}(\Gamma_{h}, \Gamma) \lesssim h^{k+1}
\end{equation}
holds. Here and further in the paper we write $A \lesssim B$ to state that there exists a constant $c>0$, which is independent of the mesh parameter $h$ and the position of $\Gamma$ in the background mesh, such that the inequality $A \leq c B$ holds.
We denote the transformed cut mesh domain by $\Omega^{\Gamma}_\Theta := \Theta_h(\Omega^\Gamma_h)$ and apply to $V_h^{k}$ the transformation $\Theta_h$ resulting in the \emph{parametric spaces} (defined on $\Omega^{\Gamma}_\Theta$)
\begin{equation*}
V_{h,\Theta}^{k} := \left\lbrace v_h \circ (\Theta_h)^{-1} \mid v_h \in {V_h^{k}}_{|\Omega^{\Gamma}_h} \right\rbrace, \quad
\bV_{h,\Theta}^{k} := (V_{h,\Theta}^{k})^3.
\end{equation*}

We recall some well-known approximation results from the literature \cite{grande2017higher}.  The parametric interpolation $I_{\Theta}^{k} \colon C(\Omega_{\Theta}^{\Gamma}) \to V_{h,\Theta}^{k}$ is defined by $(I_{\Theta}^{k} v)\circ \Theta_h = I^{k}(v\circ \Theta_h)$, with $I^k$ the standard nodal interpolation in $V_h^k$. We have the following optimal interpolation error bound for $0\leq l \leq {k}+1$:
\begin{equation} \label{eqinterpolationerror}
\Vert v - I_{\Theta}^{k} v \Vert_{H^l(\Theta_h(T))} \lesssim h^{k+1-l} \Vert v  \Vert_{H^{k+1}(\Theta_h(T))} ~~~\text{for all} ~ v \in H^{k+1}(\Theta_h(T)), T \in \mathcal{T}_h.
\end{equation}
For $\Gamma_T := \Gamma_h \cap \Theta_h(T)$,  we also need the following trace inequality~\cite{Hansbo02}:
\begin{equation} \label{eqtraceestimate}
\Vert v \Vert_{L^2(\Gamma_T)}^2 \lesssim h^{-1} \Vert  v \Vert_{L^2(\Theta_h(T))}^2 + h \Vert  \nabla v \Vert_{L^2(\Theta_h(T))}^2 ~~~ \text{for} ~ v \in H^1(\Theta_h(T)),
\end{equation}
The inequality remains true with $\Gamma^{\rm lin}$ and $T$ in place of $\Gamma_h$ and $\Theta_h(T)$.

The following approximation result for trace spaces is proved by standard arguments (cf. \cite{grande2017higher}), based on \eqref{eqinterpolationerror}, \eqref{eqtraceestimate} and \eqref{lemmasobolevnormsneighborhood} with $\epsilon=h$.
\begin{lemma} \label{lemmascalarapproximationerror}
For the space $V_{h,\Theta}^{k}$ we have the approximation property
\begin{multline*} 
\min_{v_h \in V_{h,\Theta}^{k}} \left( \Vert v^e - v_h \Vert_{L^2(\Gamma_h)} + h \Vert \nabla( v^e - v_h) \Vert_{L^2(\Gamma_h)} \right)
\leq \Vert v^e - I_{\Theta}^{k} v^e \Vert_{L^2(\Gamma_h)}\\ + h \Vert \nabla( v^e - I_{\Theta}^{k} v^e) \Vert_{L^2(\Gamma_h)} \lesssim h^{k+1} \Vert v \Vert_{H^{k+1}(\Gamma)}\quad\text{for all}~v \in H^{k+1}(\Gamma).
\end{multline*}

\end{lemma}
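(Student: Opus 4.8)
The plan is to reduce everything to the already-stated ingredients. The middle inequality in the multi-line display is trivial: since $I_\Theta^k v^e \in V_{h,\Theta}^k$ (restricted to $\Gamma_h$), taking $v_h = I_\Theta^k v^e$ bounds the minimum by the interpolation expression. So the real work is the final bound
\[
\|v^e - I_\Theta^k v^e\|_{L^2(\Gamma_h)} + h\|\nabla(v^e - I_\Theta^k v^e)\|_{L^2(\Gamma_h)} \lesssim h^{k+1}\|v\|_{H^{k+1}(\Gamma)}.
\]

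First I would localize: write $\|\cdot\|_{L^2(\Gamma_h)}^2 = \sum_{T \in \mathcal{T}_h^\Gamma} \|\cdot\|_{L^2(\Gamma_T)}^2$ with $\Gamma_T = \Gamma_h \cap \Theta_h(T)$, and similarly for the gradient term. On each piece $\Theta_h(T)$, set $w := v^e - I_\Theta^k v^e \in H^{k+1}(\Theta_h(T))$ (using that $v^e$ is smooth enough on the neighborhood, since $\Theta_h(T) \subset \mathcal{O}_{ch}$ for some constant $c$, so the normal extension is well-defined and $H^{k+1}$ there). Apply the trace inequality \eqref{eqtraceestimate} to $w$ and to each component of $\nabla w$:
\[
\|w\|_{L^2(\Gamma_T)}^2 \lesssim h^{-1}\|w\|_{L^2(\Theta_h(T))}^2 + h\|\nabla w\|_{L^2(\Theta_h(T))}^2,
\]
\[
\|\nabla w\|_{L^2(\Gamma_T)}^2 \lesssim h^{-1}\|\nabla w\|_{L^2(\Theta_h(T))}^2 + h\|\nabla^2 w\|_{L^2(\Theta_h(T))}^2.
\]
Thus the left-hand side (squared) is controlled by $h^{-1}\sum_T \big(\|w\|_{L^2(\Theta_h(T))}^2 + h^2\|\nabla w\|_{L^2}^2 + h^4\|\nabla^2 w\|_{L^2}^2\big)$ up to the $h^{-1}$-vs-$h$ bookkeeping. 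Now invoke the interpolation error bound \eqref{eqinterpolationerror} with $l = 0, 1, 2$: $\|w\|_{H^l(\Theta_h(T))} \lesssim h^{k+1-l}\|v^e\|_{H^{k+1}(\Theta_h(T))}$. Each of the three terms then contributes $\lesssim h^{2(k+1)}\|v^e\|_{H^{k+1}(\Theta_h(T))}^2$ after the $h$-powers are collected (the $h^{-1}$ from the trace estimate combines with $h^{2(k+1)}$, and the extra $h$ in the $L^2(\Gamma_h)$ term for the gradient matches up; one checks the leading power is $h^{2(k+1)-1}$ before summing, which is fine). Summing over $T$ and using finite overlap of the patches $\{\Theta_h(T)\}$ gives $\lesssim h^{2(k+1)-1}\sum_T \|v^e\|_{H^{k+1}(\Theta_h(T))}^2 \lesssim h^{2(k+1)-1}\|v^e\|_{H^{k+1}(\mathcal{O}_{ch})}^2$. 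Finally, \eqref{lemmasobolevnormsneighborhood} with $\epsilon = ch$ and $m = k+1$ yields $\|v^e\|_{H^{k+1}(\mathcal{O}_{ch})}^2 \lesssim h\|v\|_{H^{k+1}(\Gamma)}^2$, which cancels the stray $h^{-1}$ and produces $h^{2(k+1)}\|v\|_{H^{k+1}(\Gamma)}^2$. Taking square roots finishes the proof.

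The main technical point to watch — the step I expect to be fiddliest rather than truly hard — is the interplay of $h$-powers between the trace inequality (which loses $h^{-1}$), the definition of the norm on $\Gamma_h$ (which weights the gradient with an extra $h$), and the neighborhood estimate \eqref{lemmasobolevnormsneighborhood} (which gains $h^{1/2}$ per application, i.e.\ $h$ in squared form); getting these to balance to exactly $h^{k+1}$ requires care but no new ideas. One should also note that \eqref{eqinterpolationerror} is stated for $v \in H^{k+1}(\Theta_h(T))$, so strictly one applies it to $v^e$, and the relevant regularity of $v^e$ on $\Theta_h(T)$ follows because $\Theta_h(T)$ lies in a tubular neighborhood of width $\mathcal{O}(h)$ around $\Gamma$ (for $h$ small, by \eqref{distres} and shape regularity), on which $v^e(x) = v(p(x))$ is as smooth as $v$. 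All of these are standard arguments of exactly the type the excerpt attributes to \cite{grande2017higher}, so I would present the proof at the level of sketch above rather than tracking every constant.
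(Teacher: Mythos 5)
Your proof is correct and follows exactly the route the paper indicates: the paper does not write out the argument but states that the lemma "is proved by standard arguments... based on \eqref{eqinterpolationerror}, \eqref{eqtraceestimate} and \eqref{lemmasobolevnormsneighborhood} with $\epsilon=h$," which is precisely the elementwise trace-inequality-plus-interpolation-plus-neighborhood-estimate argument you carry out, and your bookkeeping of the $h$-powers (arriving at $h^{2(k+1)-1}$ before the final factor $h$ from \eqref{lemmasobolevnormsneighborhood}) is right.
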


The next lemma, taken from \cite{grande2017higher}, gives an approximation error  for the normal approximation $\bn_h$, which is easy to compute and used in our FE formulation below.
\begin{lemma} \label{lemmanormals}
For $T \in \mathcal{T}^\Gamma_h$ and any $x \in T $ define
\begin{equation*}
\bn_{\rm lin}(T) := \frac{\nabla \hat{\phi}_{h|T}}{\Vert \nabla \hat{\phi}_{h|T}\Vert_2}, \quad \bn_h (\Theta(x)) := \frac{D\Theta_h(x)^{-T} \bn_{\rm lin}(T)}{\Vert D\Theta_h(x)^{-T} \bn_{\rm lin}(T) \Vert_2}.
\end{equation*}
Restricted to surface approximations the vector fields $\bn_{\rm lin}$ and $\bn_h$ are normals on $\Gamma^{\rm lin}$ and $\Gamma_h$, respectively. Moreover,
$\Vert \bn_h - \bn \Vert_{L^\infty(\Omega_\Theta^\Gamma)} \lesssim h^{k}$ holds.
\end{lemma}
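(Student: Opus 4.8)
\textbf{Proof plan for Lemma \ref{lemmanormals}.}

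The plan is to reduce the $L^\infty$ bound on $\bn_h - \bn$ to two known estimates: the interpolation bound \eqref{phibound} on $\phi_h$ and the distance estimate \eqref{distres}, together with control on the parametric map $D\Theta_h$. First I would verify the two ``restriction'' claims, which are immediate from the construction: $\nabla\hat\phi_{h|T}$ is constant on each $T$ and is normal to the affine piece $\Gamma^{\rm lin}\cap T$ since $\hat\phi_h=I^1\phi_h$ is affine there, so $\bn_{\rm lin}(T)$ is a unit normal to $\Gamma^{\rm lin}$; and since $\Gamma_h=\Theta_h(\Gamma^{\rm lin})$, a standard transformation-of-normals identity shows that $D\Theta_h^{-T}\bn_{\rm lin}$ is (up to normalization) normal to $\Gamma_h$, so the normalized vector $\bn_h$ is the unit normal to $\Gamma_h$. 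This part is purely algebraic once one recalls that if $y=\Theta_h(x)$ and $v$ is tangent to $\Gamma_h$ at $y$, then $D\Theta_h(x)^{-1}v$ is tangent to $\Gamma^{\rm lin}$ at $x$, hence orthogonal to $\bn_{\rm lin}$, i.e.\ $v^T D\Theta_h(x)^{-T}\bn_{\rm lin}=0$.

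For the quantitative bound I would proceed in three steps. Step one: estimate $\bn_{\rm lin} - \bn^e$ in $L^\infty$ on $\Gamma^{\rm lin}$. Using $\bn = \nabla\phi/|\nabla\phi|$ near $\Gamma$ (valid since $|\nabla\phi|\ge c_0>0$) and $\bn_{\rm lin}=\nabla\hat\phi_h/|\nabla\hat\phi_h|$, the difference is controlled by $|\nabla\hat\phi_h - \nabla\phi|_{L^\infty}$; writing $\nabla\hat\phi_h-\nabla\phi = \nabla(I^1\phi_h - \phi_h) + \nabla(\phi_h-\phi)$ and invoking \eqref{phibound} for the second term ($l=1$, giving $O(h^{k})$) plus the standard $P_1$-interpolation estimate $|I^1 w - w|_{W^{1,\infty}(T)}\lesssim h\,|w|_{W^{2,\infty}(T)}$ applied to $w=\phi_h$ (whose second derivatives are bounded via \eqref{phibound} with $l=2$) gives $\nabla(I^1\phi_h-\phi_h)=O(h)$ on each $T$ — which is only first order, so one must instead compare $\bn_{\rm lin}$ directly to the exact normal of $\Gamma^{\rm lin}$ and use that $\Gamma^{\rm lin}$ itself is only an $O(h^2)$ approximation. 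This is the delicate bookkeeping point: the $O(h^{k})$ accuracy is \emph{not} visible on $\Gamma^{\rm lin}$, it only emerges after applying $\Theta_h$. Step two: control $D\Theta_h$. By construction $\Theta_h = I_\Theta^k(\mathrm{id}) + (\text{correction})$ with $\Theta_h$ close to the identity; the relevant facts, quoted from \cite{grande2017higher,lehrenfeld2017analysis}, are $\|D\Theta_h - \bI\|_{L^\infty}\lesssim h$ (in fact $\lesssim h^2$ on the relevant region) and, crucially, that the map $\Theta_h$ is designed precisely so that the composition $\bn_h\circ\Theta_h$ agrees with $\bn^e$ up to $O(h^{k})$. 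Step three: combine. Writing $\bn_h(\Theta_h(x)) - \bn(\Theta_h(x))$ and inserting $\bn^e(x)$ (noting $\bn$ is the normal extension, constant along normals, and $|\Theta_h(x)-x|\lesssim h^{k+1}$ by \eqref{distres} so $|\bn(\Theta_h(x))-\bn^e(x)|\lesssim h^{k+1}$), the problem reduces to bounding $\|F(D\Theta_h(x)^{-T}\bn_{\rm lin}) - \bn^e(x)\|$ where $F$ is the normalization map, which is Lipschitz away from $0$; a Taylor expansion of this in the ``small'' quantities $D\Theta_h - \bI$ and $\bn_{\rm lin}-\bn^e$, using the first-order cancellation built into the construction of $\Theta_h$, yields the $O(h^{k})$ bound.

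The main obstacle is Step one combined with the cancellation in Step three: naively, $\bn_{\rm lin}$ is only an $O(h)$ approximation to $\bn$ on $\Gamma^{\rm lin}$, and it is only after composing with $D\Theta_h^{-T}$ — whose deviation from the identity is itself tuned to the geometry — that the errors telescope to $O(h^{k})$. Making this telescoping precise requires the explicit structure of $\Theta_h$ from \cite{lehrenfeld2016high,grande2017higher}, in particular the relation between $D\Theta_h$ and the mapping of $\Gamma^{\rm lin}$ onto $\Gamma$; rather than re-deriving that structure here I would cite the corresponding estimate in \cite{grande2017higher} (where this very lemma is proved) and simply assemble the pieces. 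Everything else — the algebraic normal-transformation identities, the use of \eqref{phibound}, \eqref{distres}, and the Lipschitz continuity of the normalization $v\mapsto v/\|v\|_2$ on a neighborhood of the unit sphere — is routine.
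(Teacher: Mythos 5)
The paper does not actually prove this lemma: it is quoted verbatim from \cite{grande2017higher} (``The next lemma, taken from \cite{grande2017higher}\dots''), so there is no in-paper argument to match, and your decision to defer to that reference for the structural properties of $\Theta_h$ is exactly what the authors do. Your qualitative part (that $\bn_{\rm lin}$ is normal to the affine pieces of $\Gamma^{\rm lin}$ and that tangent vectors push forward under $D\Theta_h$, so $D\Theta_h^{-T}\bn_{\rm lin}$ is normal to $\Gamma_h=\Theta_h(\Gamma^{\rm lin})$) is correct, and your diagnosis that the $O(h^k)$ accuracy is invisible on $\Gamma^{\rm lin}$ and only emerges through $D\Theta_h$ is the right one.

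There is, however, a concrete error in your Step three. You claim $|\Theta_h(x)-x|\lesssim h^{k+1}$ ``by \eqref{distres}''. Estimate \eqref{distres} bounds the Hausdorff distance between the \emph{surfaces} $\Gamma_h$ and $\Gamma$, not the displacement of the \emph{map} $\Theta_h$; the correct bounds are $\|\Theta_h-{\rm id}\|_{L^\infty(\Omega_h^\Gamma)}\lesssim h^2$ and $\|D\Theta_h-\bI\|_{L^\infty(\Omega_h^\Gamma)}\lesssim h$ (your parenthetical ``in fact $\lesssim h^2$'' for the derivative is also wrong). Consequently the insertion of $\bn^e(x)$ produces an error term $|\bn(\Theta_h(x))-\bn(x)|\lesssim h^2$, which destroys the claimed $O(h^k)$ bound for every $k\ge 3$ --- precisely the range this paper needs. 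The repair, which is how \cite{grande2017higher,lehrenfeld2017analysis} actually argue, is to compare $\Theta_h$ not with the identity but with the \emph{ideal} mapping $\Psi$ satisfying $\Psi(\Gamma^{\rm lin})=\Gamma$ exactly; for $\Psi$ the transformed normal $D\Psi^{-T}\bn_{\rm lin}/\|D\Psi^{-T}\bn_{\rm lin}\|_2$ equals $\bn\circ\Psi$ with no error, and the construction of $\Theta_h$ guarantees $\|\Theta_h-\Psi\|_{L^\infty}\lesssim h^{k+1}$ and $\|D\Theta_h-D\Psi\|_{L^\infty}\lesssim h^{k}$. The Lipschitz continuity of the normalization map then gives $|\bn_h(\Theta_h(x))-\bn(\Psi(x))|\lesssim h^k$, and $|\bn(\Psi(x))-\bn(\Theta_h(x))|\lesssim\|\Theta_h-\Psi\|_{L^\infty}\lesssim h^{k+1}$ closes the argument. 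With $\Psi$ in place of ${\rm id}$ as the intermediate object your telescoping goes through; without it, Step three fails for $k\ge 3$.
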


We also define the lifting $u^l$ of a function $u$ defined   on $\Gamma_h$ by
$
u^l(p(x)) = u(x) \text{ for } x \in \Gamma_h,$  {and}
$u^l(x) = u^l(p(x)) \text{ for } x \in U_\delta.
$
The following equivalences are well known (see \cite{Dziuk88,jankuhn2019higher}) for $w \in H^1(\Gamma_h)$ and $\bv \in H^1(\Gamma_h)^3$ and we shall frequently use these
\begin{align*}
\Vert w \Vert_{L^2(\Gamma_h)} &\simeq \Vert w^l \Vert_{L^2(\Gamma)}, \qquad \Vert \nabla_{\Gamma_h} w \Vert_{L^2(\Gamma_h)} \simeq \Vert \nabla_\Gamma w^l \Vert_{L^2(\Gamma)}, \\
\Vert \bv \Vert_{L^2(\Gamma_h)} &\simeq \Vert \bv^l \Vert_{L^2(\Gamma)}, \qquad \Vert \nabla \bv^l \bP_h \Vert_{L^2(\Gamma_h)} \simeq \Vert \nabla \bv^l \bP \Vert_{L^2(\Gamma)}. 
\end{align*}
A norm on $H^1(\Gamma_h)^3$ is defined using the component-wise lifting by
\begin{equation*}
\Vert \bu \Vert_{H^1(\Gamma_h)}^2 := \int_{\Gamma_h} \Vert \bu(s) \Vert_2^2 + \Vert \nabla \bu^l(s) \bP_h(s) \Vert_2^2 \, ds,
\end{equation*}
with $\bP_h = \bI - \bn_h \bn_h^T$. Finally, we need the  following  spaces
\begin{equation*} \begin{split}
V_{reg,h} := \left\lbrace v \in H^1(\Omega_{\Theta}^{\Gamma}) \mid \text{tr} |_{\Gamma_h} v \in H^1(\Gamma_h) \right\rbrace \supset V_{h,\Theta}^{k}, \quad
\bV_{reg,h} := \big(V_{reg,h}\big)^3
\end{split}
\end{equation*}
and the ``discrete'' covariant gradient for $\bu\in\bV_{reg,h}$,
$
 \gradGh \bu := \bP_h \nabla \bu \bP_h.
$

\section{Higher order  trace finite element methods} \label{sectFEmethods}
 Based on the parametric finite element spaces $\bV_{h,\Theta}^{k}$ and $V_{h,\Theta}^{k}$ we consider for $k \geq 2$ the $\vect P_k$--$P_{k-1}$ pair of \emph{parametric trace Taylor--Hood elements}:
\begin{equation*}
\bU_h := \bV_{h,\Theta}^{k}, \qquad Q_h := V_{h,\Theta}^{k-1} \cap L^2_0(\Gamma_h).
\end{equation*}
Note that  the polynomial degrees, $k$ and $k-1$, for the velocity and pressure approximation are different, but both spaces $\bU_h$ and $Q_h$ use the same parametric mapping based on polynomials of degree $k$. Since the pressure approximation uses $H^1$ finite element functions we can use the integration by parts
 $
b(\bu_T,p) = \int_{\Gamma}  \bu \cdot \gradG p \, ds,
$ and replace $\Gamma$ by $\Gamma_h$ in the definition of the FE bilinear form.
Furthermore, recalling the identity $E(\bu_T)= E(\bu)-u_N \bH$ for $\bu=\bu_T+u_N\bn$ on $\Gamma$, we define the discrete rate-of-strain tensor by
\[
  E_{h}(\bu):=\frac12 \big(\gradGh \bu + \gradGh^T \bu\big) - (\bu \cdot \bn_h) \bH_h.
\]
We introduce   the following FE variants of the bilinear forms $a(\cdot,\cdot)$,  $b(\bP\cdot,\cdot)$ and the penalty form $k(\cdot,\cdot)$:
\begin{align*}
 a_{h}(\bu,\bv) &:= \int_{\Gamma_h} E_{h}(\bu):E_{h}(\bv)\, ds_h + \int_{\Gamma_h} \bP_h \bu \cdot \bP_h \bv \, ds_h,\\
  b_h(\bu,q)& := \int_{\Gamma_h} \bu \cdot \gradGh q \, ds_h,\quad
 k_h(\bu,\bv):= \eta \int_{\Gamma_h} (\bu \cdot \tilde{\bn}_h) (\bv \cdot \tilde{\bn}_h)  \, ds_h.
\end{align*}
The bilinear form $k_h(\cdot,\cdot)$ is used to enforce (approximately) the condition $\bu \cdot \bn=0$.
The normal vector used in this bilinear form, and the curvature tensor $\bH_h$ are approximations of the exact normal and the exact Weingarten mapping, respectively.
The reason that we introduce yet another normal approximation $\tilde{\bn}_h$ is the following. From an error  analysis of the vector-Laplace problem in \cite{hansbo2016analysis,jankuhn2019higher} it follows that for obtaining optimal order estimates  the normal approximation $\tilde{\bn}_h$ used in the penalty term has to be more accurate than the normal approximation $\bn_h$. We assume
\begin{align*}
\Vert \bn - \tilde{\bn}_h \Vert_{L^{\infty}(\Gamma_h)} \lesssim h^{k+1}~~\text{and}~~
\Vert \bH - \bH_h \Vert_{L^{\infty}(\Gamma_h)} \lesssim h^{k-1}. 
\end{align*}
Since the trace FEM is a geometrically unfitted FE method, we need a stabilization that eliminates instabilities caused by small cuts. For this we use the so-called ``normal derivative volume stabilization'', known from the literature \cite{burmanembedded,grande2017higher}:
  \[
  s_h(\bu,\bv)  := \rho_u\int_{\Omega_{\Theta}^{\Gamma}} (\nabla \bu \bn_h) \cdot (\nabla \bv \bn_h)  \, dx, \quad
  \tilde{s}_h(p,q) := \rho_p  \int_{\Omega_{\Theta}^{\Gamma}} (\bn_h\cdot \nabla p) (\bn_h \cdot \nabla q) \, dx.
\]
The choice of the stabilization parameters $\rho_u,\, \rho_p$ will be discussed below; see \eqref{choicerho}.
\\[1ex]
For a suitable (sufficiently accurate) extension of the data $\bbf$ and $g$ to $\Gamma_h$, denoted by $\bbf_h$ and $g_h$, the finite element method reads:
Find $(\bu_h, p_h) \in \bU_h \times Q_h$ such that
\begin{equation} \tag{FEM} \label{discreteform1}
\begin{aligned}
A_h(\bu_h,\bv_h) + b_h(\bv_h,p_h) & =(\mathbf{f}_h,\bv_h)_{L^2(\Gamma_h)} &\quad &\text{for all } \bv_h \in \bU_h \\
b_h(\bu_h,q_h) - \tilde{s}_h(p_h,q_h) & = (-g_h,q_h)_{L^2(\Gamma_h)} &\quad &\text{for all }q_h \in Q_h,
\end{aligned}
\end{equation}
where $A_h(\bu,\bv) := a_{h}(\bu,\bv) + s_h(\bu,\bv)+k_h(\bu,\bv).$

In the error analysis below we use the following natural norms
\begin{equation} \label{defnorms} \begin{split}
\Vert \bu \Vert_{A}^2 := A_h(\bu, \bu), \qquad \Vert p \Vert_{M}^2 := \Vert p \Vert_{L^2(\Gamma_h)}^2 + \rho_p \Vert \bn_h \cdot \nabla p \Vert_{L^2(\Omega_{\Theta}^{\Gamma})}^2 .
\end{split}
\end{equation}
We address the choice of the stabilization parameters  $\rho_u$,
$\rho_p$ and the penalty parameter $\eta$. The analysis of optimal order error
bounds for vector-Laplace problem in \cite{jankuhn2019trace}  is restricted
to $\rho_u \simeq h^{-1}$, $\eta\simeq h^{-2}$.
Moreover, experiments discussed in \cite{jankuhn2019higher}
indicate that the choice $\rho_u \simeq h$ does not allow optimal
order error bounds. The stability analysis of trace $\mathbf{P}_2$--$P_1$ Taylor--Hood
elements in \cite{olshanskii2019inf}
suggests that $\rho_p \simeq h$ is the optimal choice.
 Therefore, \emph{in the remainder we restrict the stabilization parameters to}
\begin{equation} \label{choicerho}
\rho_u \simeq  h^{-1}, \quad \rho_p \simeq h, \quad \eta\simeq h^{-2}.
\end{equation}

\section{Well-posedness of discretizations} \label{sectwellposed}
Before we analyze the properties of the finite element bilinear forms we recall a lemma (\cite[Lemma 7.8]{grande2017higher} ) which shows that for finite element functions the $L^2$-norm in the neighborhood $\Omega_{\Theta}^{\Gamma}$ can be controlled by the $L^2$-norm on $\Gamma_h$ and the $L^2$-norm of the normal derivative  on $\Omega_{\Theta}^{\Gamma}$.
\begin{lemma} \label{lemmastabilizationinequality}
	For all $k \in \mathbb{N}$, $k\geq 1$, the following inequality holds:
	\begin{equation}\label{L2control}
	\Vert v_h \Vert_{L^2(\Omega_{\Theta}^{\Gamma})}^2 \lesssim h\Vert v_h \Vert_{L^2(\Gamma_h)}^2 + h^2\Vert \bn_h \cdot \nabla v_h \Vert_{L^2(\Omega_{\Theta}^{\Gamma})}^2   ~~~~\text{for all} ~v_h \in V_{h,\Theta}^{k}.
	\end{equation}
The result remains true if $\Omega_{\Theta}^{\Gamma},$ $\Gamma_h$, and $V_{h,\Theta}^{k}$ are replaced by $\Omega_{h}^{\Gamma},$ $\Gamma^{\rm lin}$, and $V_{h}^{k}$, respectively.
\end{lemma}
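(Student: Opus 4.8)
The plan is to prove the inequality \eqref{L2control} by a scaling/transformation argument reducing to the reference setting, and then to combine it with a one-dimensional integration-along-normals estimate. First I would transform the claim via $\Theta_h$: since $\Theta_h$ has uniformly bounded derivatives and inverse derivatives (with constants independent of $h$ and of the position of $\Gamma$ in the mesh), the weighted $L^2$-norms over $\Omega_\Theta^\Gamma$ and $\Gamma_h$ are equivalent, up to such constants, to the corresponding norms over $\Omega_h^\Gamma$ and $\Gamma^{\rm lin}$ for the untransformed function $v_h \in V_h^k$; and the normal derivative $\bn_h\cdot\nabla v_h$ on $\Omega_\Theta^\Gamma$ is comparable to $\bn_{\rm lin}\cdot\nabla v_h$ on $\Omega_h^\Gamma$ (this is exactly the relationship between $\bn_h$ and $\bn_{\rm lin}$ built into Lemma \ref{lemmanormals}). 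Hence it suffices to prove the ``$\Gamma^{\rm lin}$, $\Omega_h^\Gamma$, $V_h^k$'' version of the statement, which is the genuinely geometric one.

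For that version I would argue cellwise. Fix $T \in \mathcal{T}_h^\Gamma$ with $T\cap\Gamma^{\rm lin}\neq\emptyset$. On $T$ the function $v_h$ is a polynomial of degree $\le k$, and $\Gamma^{\rm lin}\cap T$ is a planar piece with normal $\bn_{\rm lin}(T)$ constant on $T$. Introduce local coordinates $(y',y_n)$ with $y_n$ the coordinate in the direction $\bn_{\rm lin}(T)$. For each fixed $y'$ in the (bounded) cross-section, the function $y_n \mapsto v_h(y',y_n)$ is a polynomial of degree $\le k$ on an interval of length $\lesssim h$, while $\Gamma^{\rm lin}\cap T$ meets this line in (at most) one point $y_n^*(y')$. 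The key elementary fact is a one-dimensional polynomial inequality: for a polynomial $q$ of degree $\le k$ on an interval $J$ of length $\ell$ and any point $t^*\in J$,
\begin{equation*}
\int_J |q(t)|^2 \, dt \lesssim \ell\, |q(t^*)|^2 + \ell^2 \int_J |q'(t)|^2 \, dt,
\end{equation*}
with a constant depending only on $k$. This is proved by writing $q(t) = q(t^*) + \int_{t^*}^t q'(s)\,ds$, squaring, applying Cauchy--Schwarz to the second term, and integrating over $J$; only the polynomial degree enters through nothing at all, in fact the estimate holds for all $H^1(J)$ functions. Integrating this over the cross-section $y'$, and then summing over the cells $T$, yields the cellwise-summed bound: the first term sums to $h\Vert v_h\Vert_{L^2(\Gamma^{\rm lin})}^2$ (using that on each cell the surface measure on $\Gamma^{\rm lin}\cap T$ is comparable to the $(n{-}1)$-dimensional Lebesgue measure of the cross-section, the Jacobian being bounded since $|\nabla\hat\phi_h|$ is bounded above and below), and the second term sums to $h^2\Vert \bn_{\rm lin}\cdot\nabla v_h\Vert_{L^2(\Omega_h^\Gamma)}^2$, which is controlled by $h^2\Vert\nabla v_h\Vert_{L^2(\Omega_h^\Gamma)}^2$ but more precisely matches the normal-derivative term in the statement exactly.

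**Main obstacle.** The routine parts are the 1D polynomial inequality and the cellwise sum; the part that needs care is the geometric comparison of measures and of normal directions. Specifically, one must ensure that (i) the slicing direction $\bn_{\rm lin}(T)$ and the geometry of $\Gamma^{\rm lin}\cap T$ inside $T$ are such that every line in direction $\bn_{\rm lin}(T)$ through $T$ meets $\Gamma^{\rm lin}$ at most once and the cross-section is bounded, with a length control $\lesssim h$ that is \emph{uniform} in the cut position — this uses shape regularity of $\mathcal{T}_h$ together with \eqref{phibound}, which guarantees $\nabla\hat\phi_h$ is close to $\nabla\phi$ and hence well-separated from degenerate configurations for $h$ small; and (ii) the surface Jacobian relating $d s_h$ on $\Gamma^{\rm lin}\cap T$ to $dy'$ is bounded above and below independently of $h$ and the cut. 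Both points are standard in the trace-FEM literature (cf. the references \cite{Hansbo02,grande2017higher} for the trace inequality \eqref{eqtraceestimate}, whose proof shares exactly this structure), and in fact \eqref{L2control} can be seen as a ``reverse companion'' of \eqref{eqtraceestimate}: \eqref{eqtraceestimate} bounds the surface norm by the volume norm plus a gradient term with weights $h^{-1},h$, whereas \eqref{L2control} bounds the volume norm by the surface norm plus a \emph{normal}-derivative term with weights $h,h^2$ — the extra power of $h$ and the restriction to the normal derivative being precisely what the polynomial slicing argument delivers. Since all the above is available with $h$-uniform constants that are also uniform in the surface position (the hallmark ``$\lesssim$'' of the paper), transferring back through $\Theta_h$ completes the proof.
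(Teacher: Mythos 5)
The paper itself does not prove this lemma; it is quoted from \cite{grande2017higher} (Lemma~7.8), so your argument has to stand on its own. It does not: the cellwise slicing step fails on elements with small cuts, which is precisely the configuration the lemma is designed to control uniformly.

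Concretely, your plan amounts to proving, for each $T\in\T_h^\Gamma$ separately, the estimate $\|v_h\|_{L^2(T)}^2 \lesssim h\|v_h\|_{L^2(\Gammalin\cap T)}^2 + h^2\|\bn_{\rm lin}\cdot\nabla v_h\|_{L^2(T)}^2$ by anchoring the one-dimensional inequality at the point $y_n^*(y')$ where the line in direction $\bn_{\rm lin}(T)$ through $y'$ meets $\Gammalin\cap T$. But a line through $T$ in that direction need not meet $\Gammalin\cap T$ at all --- your own parenthetical ``(at most) one point'' concedes this --- and the part of $T$ swept by such lines is left uncontrolled. Worse, the cellwise inequality is simply false: if $\Gammalin$ clips only a corner of $T$ so that $|\Gammalin\cap T|=\epsilon\ll h_T^2$, then for $v_h\equiv c$ the left-hand side is $\simeq c^2h_T^3$ while the right-hand side is $c^2h_T\epsilon$, and $\epsilon$ can be arbitrarily small; uniformity in the cut position is exactly what the ``$\lesssim$'' of the paper demands. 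So no completion of the per-element argument can work.

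The missing idea is that the estimate is genuinely global over the strip $\OGamma$: one integrates along lines that are allowed to cross element faces and continue until they hit $\Gammalin$ somewhere in the strip (every point of $\OGamma$ lies within $O(h)$ of $\Gammalin$ along a suitable transversal direction). This forces two changes. First, the slicing direction cannot be the elementwise constant $\bn_{\rm lin}(T)$, since it jumps across faces; one uses a single transversal field (e.g.\ $\bn=\nabla d$ or $\nabla\phi/|\nabla\phi|$) for the line integration in tubular/co-area coordinates, and only at the end replaces $\bn\cdot\nabla v_h$ by $\bn_{\rm lin}\cdot\nabla v_h$, the error $h^2\|(\bn-\bn_{\rm lin})\cdot\nabla v_h\|_{L^2(\OGamma)}^2\lesssim h^4\|\nabla v_h\|_{L^2(\OGamma)}^2\lesssim h^2\|v_h\|_{L^2(\OGamma)}^2$ being absorbed into the left-hand side for small $h$ via a finite element inverse inequality. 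Second, along such a line $v_h$ is only piecewise polynomial (merely Lipschitz), so the one-dimensional inequality must be used in the $H^1$ form you mention, with a global Fubini bookkeeping rather than a per-cell cross-section count. Your reduction from $\Gamma_h,\Omega_\Theta^\Gamma$ to $\Gammalin,\OGamma$ is fine (noting that $\bn_h\cdot\nabla$ pulls back to $\bn_{\rm lin}^TD\Theta_h^{-1}D\Theta_h^{-T}\nabla$, another $O(h)$ perturbation absorbed the same way), but the core of the lemma is the global line-integration argument that your cellwise reduction discards.
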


We formulate a few corollaries that are   useful in the remainder.
The following results are obtained by  application of \eqref{L2control}, \eqref{eqtraceestimate} and standard FE inverse inequalities:
\begin{align}
\Vert q_h \Vert_{L^2(\Omega_{\Theta}^{\Gamma})} & \simeq h^\frac12 \|q_h\|_M~~~\text{for all} ~q_h \in  V_{h,\Theta}^{k}, \label{HH3} \\
\Vert \bv_h \Vert_{L^2(\Omega_{\Theta}^{\Gamma})} &\simeq h^\frac12 \Vert \bv_h \Vert_{L^2(\Gamma_h)} + h\Vert \nabla \bv_h \bn_h \Vert_{L^2(\Omega_{\Theta}^{\Gamma})}   ~~~\text{for all} ~\bv_h \in \bV_{h,\Theta}^{k}. \label{HH2}
\end{align}
Using \eqref{eqtraceestimate} and \eqref{L2control} we also obtain the surface inverse inequality
\begin{equation} \label{HH4}
\|\nabla q_h \|_{L^2(\Gamma_h)} \lesssim h^{-1} \|q_h\|_{L^2(\Gamma_h)} + h^{-\frac12} \|\bn_h \cdot \nabla q_h\|_{L^2(\Omega_{\Theta}^{\Gamma})} = h^{-1}\|q_h\|_M,~ ~q_h \in V_{h,\Theta}^{k},
\end{equation}
and the vector analog
\begin{equation} \label{HH4a}
\|\nabla \bv_h \|_{L^2(\Gamma_h)} \lesssim h^{-1} \|\bv_h\|_{L^2(\Gamma_h)} + h^{-\frac12} \| \nabla \bv_h\bn_h\|_{L^2(\Omega_{\Theta}^{\Gamma})},~~
\bv_h \in \bV_{h,\Theta}^{k}.
\end{equation}
\begin{lemma} \label{Ahscalarproduct}
	The following continuity and coercivity estimates hold:
	\begin{align}
	 & A_h(\bu,\bv)  \leq \Vert \bu \Vert_{A} \Vert \bv \Vert_{A},\quad
	b_h(\bu,q)  \lesssim \Vert \bu \Vert_{A} \Vert q \Vert_{M},\quad \forall~\bu, \bv \in \bV_{reg,h},~q \in V_{reg,h}, \label{EstCont}\\
	 & h^{-1} \Vert \bu_h \Vert_{L^2(\Omega_{\Theta}^{\Gamma})}^2  \lesssim A_h(\bu_h, \bu_h), \qquad
	\|\bu_h\|_{H^1(\Gamma_h)}^2 \lesssim A_h(\bu_h,\bu_h) \quad \forall~ \bu_h \in \bV_{h,\Theta}^{k}. \label{EstEll}
\end{align}
\end{lemma}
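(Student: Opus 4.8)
The plan is to establish the four inequalities in turn. Continuity of $A_h$ is immediate; continuity of $b_h$ reduces, after an integration by parts on the closed surface $\Gamma_h$, to an $L^2(\Gamma_h)$-bound on the normal component of $\bu$; the $L^2(\Omega_\Theta^\Gamma)$-coercivity is a one-line consequence of \eqref{HH2}; and the $H^1(\Gamma_h)$-coercivity — the substantial point — rests on the surface Korn inequality \eqref{Korn} together with the fact that the penalty weight $\eta\simeq h^{-2}$ and the stabilization weight $\rho_u\simeq h^{-1}$ are strong enough to absorb an $h^{-1}$ loss incurred by an inverse inequality on the (small) nearly-normal part of $\bu_h$.

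Since $a_h$, $s_h$ and $k_h$ are symmetric and positive semidefinite, so is $A_h$, and $A_h(\bu,\bv)\le\|\bu\|_A\|\bv\|_A$ is the Cauchy--Schwarz inequality for this semi-inner product. For $b_h$: as $\gradGh q$ is tangential to $\Gamma_h$ we have $b_h(\bu,q)=\int_{\Gamma_h}\bP_h\bu\cdot\gradGh q\,ds_h$, and applying the surface divergence theorem to the tangential field $q\,\bP_h\bu$ on the closed surface $\Gamma_h$ gives $b_h(\bu,q)=-\int_{\Gamma_h}q\,\divGh(\bP_h\bu)\,ds_h$; using $E_h(\bu)=\tfrac12(\gradGh\bu+\gradGh^T\bu)-(\bu\cdot\bn_h)\bH_h$ one checks that $\divGh(\bP_h\bu)=\tr E_h(\bu)+c_h\,(\bu\cdot\bn_h)$ with a bounded scalar function $c_h$, so that $|b_h(\bu,q)|\lesssim\|q\|_{L^2(\Gamma_h)}\big(\|E_h(\bu)\|_{L^2(\Gamma_h)}+\|\bu\cdot\bn_h\|_{L^2(\Gamma_h)}\big)$. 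Since $\|q\|_{L^2(\Gamma_h)}\le\|q\|_M$ and $\|E_h(\bu)\|_{L^2(\Gamma_h)}\le\|\bu\|_A$, the only nontrivial ingredient is $\|\bu\cdot\bn_h\|_{L^2(\Gamma_h)}\lesssim h\|\bu\|_A$. I would prove this by splitting $\bu\cdot\bn_h=\bu\cdot\tilde{\bn}_h+\bu\cdot(\bn_h-\tilde{\bn}_h)$: the first term equals $\eta^{-1/2}k_h(\bu,\bu)^{1/2}\lesssim h\|\bu\|_A$, while the second is $\lesssim\|\bn_h-\tilde{\bn}_h\|_{L^\infty(\Gamma_h)}\|\bu\|_{L^2(\Gamma_h)}\lesssim h^k\|\bu\|_{L^2(\Gamma_h)}$; inserting this into the identity $\|\bu\|_{L^2(\Gamma_h)}^2=\|\bP_h\bu\|_{L^2(\Gamma_h)}^2+\|\bu\cdot\bn_h\|_{L^2(\Gamma_h)}^2\le\|\bu\|_A^2+\|\bu\cdot\bn_h\|_{L^2(\Gamma_h)}^2$ and absorbing the remainder (possible since $k\ge2$) yields $\|\bu\cdot\bn_h\|_{L^2(\Gamma_h)}\lesssim h\|\bu\|_A$ and, as a by-product, $\|\bu\|_{L^2(\Gamma_h)}\lesssim\|\bu\|_A$, for every $\bu\in\bV_{reg,h}$.

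The bound $h^{-1}\|\bu_h\|_{L^2(\Omega_\Theta^\Gamma)}^2\lesssim A_h(\bu_h,\bu_h)$ follows from \eqref{HH2}: $\|\bu_h\|_{L^2(\Omega_\Theta^\Gamma)}^2\lesssim h\|\bu_h\|_{L^2(\Gamma_h)}^2+h^2\|\nabla\bu_h\bn_h\|_{L^2(\Omega_\Theta^\Gamma)}^2\lesssim h\|\bu_h\|_A^2+h^3s_h(\bu_h,\bu_h)\lesssim h\,A_h(\bu_h,\bu_h)$, where I used the bound just proved and $\|\nabla\bu_h\bn_h\|_{L^2(\Omega_\Theta^\Gamma)}^2=\rho_u^{-1}s_h(\bu_h,\bu_h)\simeq h\,s_h(\bu_h,\bu_h)$. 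For the $H^1(\Gamma_h)$-coercivity, observe that for a finite element function $\bu_h^l=\bu_h$ on $\Gamma_h$, so (splitting $\nabla\bu_h\bP_h=\bP_h\nabla\bu_h\bP_h+\bn_h\bn_h^T\nabla\bu_h\bP_h$) $\|\bu_h\|_{H^1(\Gamma_h)}^2=\|\bu_h\|_{L^2(\Gamma_h)}^2+\|\gradGh\bu_h\|_{L^2(\Gamma_h)}^2+\|\bn_h^T\nabla\bu_h\bP_h\|_{L^2(\Gamma_h)}^2$; since $\|\bu_h\|_{L^2(\Gamma_h)}\lesssim\|\bu_h\|_A$, it remains to bound the last two terms by $\|\bu_h\|_A$. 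Decompose $\bu_h=\bw_h+z_h\bn_h$ with $\bw_h=\bP_h\bu_h$ tangential to $\Gamma_h$ and $z_h=\bu_h\cdot\bn_h$, so that $\|\bw_h\|_{L^2(\Gamma_h)}\le\|\bu_h\|_A$ and $\|z_h\|_{L^2(\Gamma_h)}\lesssim h\|\bu_h\|_A$. From $\gradGh\bu_h=\gradGh\bw_h+z_h\,\bP_h\nabla\bn_h\bP_h$ one gets $E_h(\bu_h)=E_{\Gamma_h}(\bw_h)+z_h\bK_h$, where $E_{\Gamma_h}(\bw_h):=\tfrac12(\gradGh\bw_h+\gradGh^T\bw_h)$ and $\bK_h$ is a bounded tensor field, hence $\|E_{\Gamma_h}(\bw_h)\|_{L^2(\Gamma_h)}\lesssim\|\bu_h\|_A$. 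Applying the surface Korn inequality on $\Gamma_h$ — valid with an $h$-uniform constant for small $h$, as it transfers from \eqref{Korn} to $\Gamma_h$ by the geometric perturbation arguments of \cite{jankuhn2019trace,grande2017higher} — to the tangential field $\bw_h$ then gives $\|\gradGh\bw_h\|_{L^2(\Gamma_h)}\lesssim\|\bw_h\|_{L^2(\Gamma_h)}+\|E_{\Gamma_h}(\bw_h)\|_{L^2(\Gamma_h)}\lesssim\|\bu_h\|_A$, and hence $\|\gradGh\bu_h\|_{L^2(\Gamma_h)}\lesssim\|\bu_h\|_A$ as well. For the remaining mixed block, a short computation shows that $\bn_h^T\nabla\bu_h\bP_h$ equals $(\gradGh z_h)^T$ up to a term of $L^2(\Gamma_h)$-norm $\lesssim\|\bu_h\|_{L^2(\Gamma_h)}$, and a surface inverse inequality of the type \eqref{HH4} applied to $z_h$ gives $\|\gradGh z_h\|_{L^2(\Gamma_h)}\lesssim h^{-1}\|z_h\|_{L^2(\Gamma_h)}+h^{-1/2}\|\bn_h\cdot\nabla z_h\|_{L^2(\Omega_\Theta^\Gamma)}$; here $h^{-1}\|z_h\|_{L^2(\Gamma_h)}\lesssim\|\bu_h\|_A$ by the penalty bound, and $\bn_h\cdot\nabla z_h=\bn_h\cdot(\nabla\bu_h\bn_h)+\bu_h\cdot(\nabla\bn_h\bn_h)$ is bounded in $L^2(\Omega_\Theta^\Gamma)$ by $\rho_u^{-1/2}s_h(\bu_h,\bu_h)^{1/2}+\|\bu_h\|_{L^2(\Omega_\Theta^\Gamma)}\lesssim h^{1/2}\|\bu_h\|_A$, so that $h^{-1/2}\|\bn_h\cdot\nabla z_h\|_{L^2(\Omega_\Theta^\Gamma)}\lesssim\|\bu_h\|_A$. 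Collecting the estimates gives $\|\bu_h\|_{H^1(\Gamma_h)}^2\lesssim A_h(\bu_h,\bu_h)$.

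The main obstacle is this last estimate, for two linked reasons: the surface Korn inequality applies only to genuinely tangential fields, so the normal component $z_h\bn_h$ has to be split off, and the tangential gradient of $z_h\bn_h$ is not controlled by $\|z_h\|_{L^2(\Gamma_h)}$ but only via an inverse inequality that costs a factor $h^{-1}$ — which is affordable exactly because the strong penalty $\eta\simeq h^{-2}$ forces $\|z_h\|_{L^2(\Gamma_h)}\lesssim h\|\bu_h\|_A$ and the stabilization $\rho_u\simeq h^{-1}$ controls the normal derivative of $\bu_h$. Securing the surface Korn inequality on the merely Lipschitz, $h$-dependent surface $\Gamma_h$ with a constant independent of $h$ is the remaining geometric point, imported from the vector-Laplace analysis \cite{jankuhn2019trace}. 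The other three estimates reduce to Lemma~\ref{lemmastabilizationinequality}, the inequalities \eqref{HH2} and \eqref{HH4}, the integration by parts above, and Cauchy--Schwarz.
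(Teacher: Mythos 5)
Your treatment of the $A_h$-continuity and of the first estimate in \eqref{EstEll} matches the paper (Cauchy--Schwarz, respectively \eqref{HH2} combined with the penalty term to recover the full $L^2(\Gamma_h)$-norm), but the other two parts have genuine gaps. For the $b_h$-continuity in \eqref{EstCont} you integrate by parts over $\Gamma_h$ as if it were a closed smooth surface. It is only piecewise smooth, and the divergence theorem applied patchwise to $q\,\bP_h\bu$ produces the conormal jump terms $\sum_{E\in\mathcal{E}_h}\int_E [\nu_h]\cdot\bu\, q\,dl$ of \eqref{aux1467}, which you drop. These terms do not vanish, and for general $(\bu,q)\in\bV_{reg,h}\times V_{reg,h}$ they cannot be bounded by $\Vert\bu\Vert_A\Vert q\Vert_M$: the paper's own control of such edge terms (Lemma~\ref{LemmapartInt}, via Lemma~\ref{lemmaedgeu}) relies on elementwise trace estimates combined with finite element inverse inequalities, hence is restricted to $\bU_h\times Q_h$, and no analogue is available in $\bV_{reg,h}\times V_{reg,h}$. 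The paper does not take this route at all; it disposes of \eqref{EstCont} directly by Cauchy--Schwarz in the weighted norms, so your detour introduces uncontrolled terms rather than simplifying anything.

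The more serious gap is in the $H^1(\Gamma_h)$-coercivity, which the paper does not prove but imports as the discrete Korn inequality of \cite{jankuhn2019trace} (Lemma 5.16 there). You decompose $\bu_h=\bw_h+z_h\bn_h$ with $\bw_h=\bP_h\bu_h$ and invoke ``the surface Korn inequality on $\Gamma_h$'' for $\bw_h$ with an $h$-uniform constant, claiming it transfers from \eqref{Korn} by perturbation. But $\bn_h$, hence $\bP_h$ and $\bw_h$, are discontinuous across the interfaces of the cut elements, so $\bw_h\notin H^1(\Gamma_h)^3$, and a Korn inequality for such piecewise-$H^1$ tangential fields on the Lipschitz surface $\Gamma_h$, uniform in $h$ and in the cut position and without any inter-element jump control, does not follow from \eqref{Korn} by a geometric perturbation argument; it is precisely the nontrivial content of the cited discrete Korn lemma, whose actual proof lifts to $\Gamma$, decomposes with the exact smooth projection $\bP$ (so that the tangential part stays in $H^1(\Gamma)^3$ and \eqref{Korn} applies), and absorbs the normal component using the penalty term $k_h$ and the volume stabilization $s_h$. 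As written, your key step either assumes essentially what is to be proven or silently defers to the same reference the paper cites, in which case the surrounding computation adds nothing. A secondary, repairable point in the same part: \eqref{HH4} is stated for $q_h\in V_{h,\Theta}^{k}$, and $z_h=\bu_h\cdot\bn_h$ is not a finite element function, so its use there needs an extra (elementwise, product-rule) justification.
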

\begin{proof}
	The estimates in \eqref{EstCont} follow from the Cauchy-Schwarz inequality. The first result in \eqref{EstEll}  follows from \eqref{HH2}:
	\begin{equation*} \begin{split}
	A_h(\bu_h, \bu_h)  \geq    \Vert \bu_h \Vert_{L^2(\Gamma_h)}^2 + \rho_u \Vert \nabla \bu_h \bn_h \Vert_{L^2(\Omega_\Theta^\Gamma)}^2
	\gtrsim h^{-1} \Vert \bu_h \Vert_{L^2(\Omega_\Theta^{\Gamma})}^2.
	\end{split}
	\end{equation*}
	The second result in \eqref{EstEll} is proven in Lemma 5.16 in \cite{jankuhn2019trace}.
\end{proof}

The following \emph{inf-sup condition} is crucial for the well-posedness and error analysis of our FE formulations:
	There exists $c_0>0$ independent of $h$ and the position of $\Gamma_h$ in the mesh such that
	\begin{equation} \label{infsup}
	c_0 \Vert q_h \Vert_M \leq \sup_{\bv_h \in \bU_h} \frac{|b_h(\bv_h,q_h)|}{\Vert \bv_h \Vert_A} + \tilde{s}_h(q_h,q_h)^\frac12 \qquad \forall q_h \in Q_h.
	\end{equation}
Below we denote this condition by ``\emph{inf-sup condtion for} $\Gamma_h$''.

From the fact that $A(\cdot,\cdot)$ defines a scalar product on $\bU_h$, cf. Lemma~\ref{Ahscalarproduct}, and the inf-sup condition~\eqref{infsup} for $b_h(\cdot,\cdot)$ on $\bU_h \times Q_h$  it follows that problem \eqref{discreteform1} has a unique solution.

\subsection{Analysis of inf-sup condition for $\Gamma_h$} \label{sectstab}
In \cite{olshanskii2019inf}, an inf-sup  condition as in \eqref{infsup} was shown to hold (only) for $k=2$ and \emph{assuming exact integration of traces over} $\Gamma$, i.e. $\Gamma_h=\Gamma$. Below we show that the arguments can be extended to include the effect of geometric errors and to  $k\geq 2$.   The analysis of the effect of geometric errors on the stability properties of the trace Taylor--Hood pair, which has not been addressed in the literature so far, although rather technical, has a clear structure.  This strucure  is as follows. In the next section we derive an integration by part perturbation result for the bilinear form $b_h(\cdot,\cdot)$.  Using this result we then show, in section~\ref{sectRes2} that the inf-sup condition for $\Gamma_h$ follows from the analogous inf-sup condition for $\Gamma^{\rm lin}$. In section~\ref{sectRes3} we derive, using the inf-sup property of the bilinear form $b(\cdot,\cdot)$ for the pair $\bV_T \times L_0^2(\Gamma)$, an equivalent formulation of the inf-sup
condition for $\Gamma^{\rm lin}$ (``Verf\"urth's trick''). Finally, using results from \cite{olshanskii2019inf} a proof for $k\geq 2$ of this equivalent formulation of the inf-sup condition for $\Gamma^{\rm lin}$ is presented (section~\ref{sectRes4}).

\subsubsection{Integration by parts over $\Gamma_h$} \label{sectintpart}
On the smooth closed surface $\Gamma$  the partial integration rule $b(\bv,q)=- \int_{\Gamma} q \divG \bv \, ds= \int_{\Gamma} \bv \cdot \nabla_\Gamma q  \, ds$, for $\bv \in \bV_T$, $q \in H^1(\Gamma)$, holds. If
$\Gamma$ is replaced by $\Gamma_h$ or $\Gamma^{\rm lin}$ and we consider velocity fields that are not necessarily tangential,  extra terms arise due to jumps of co-normal vectors over edges.
Denote by $\mathcal{E}_h$ the collection of all edges in $\Gamma_h$. Let $E\in\mathcal{E}_h$ be the common edge of two surface segments $\Gamma_{T^+}, \Gamma_{T^-}\subset\Gamma_h$, and $\nu_h^+, \nu_h^-$ are the corresponding unit co-normals, i.e. $\nu_h^+$ is normal to $E$ and tangential for $\Gamma_{T^+}$.
Integration by parts over each smooth surface patch $\Gamma_T=\Gamma_h\cap \Theta(T)$,  leads to
\begin{multline}\label{aux1467}
\int_{\Gamma_h}\bv \cdot\nabla_{\Gamma_h}q\,ds_h\\ =-\int_{\Gamma_h}q\operatorname{div}_{\Gamma_h}\bv\,ds_h+\sum_{T\in\T_h^\Gamma}\int_{\Gamma_T}(\bv\cdot\bn_h)q\divGh\bn_h\,ds+
\sum_{E\in\mathcal{E}_h}\int_{E}[\nu_h\cdot\bv]q \,dl,
\end{multline}
for functions $\bv$, $q$ that are sufficiently smooth on each of the patches. An analogous formula holds with $\Gamma_h$ replaced by $\Gammalin$. Below, in Lemma~\ref{LemmapartInt} we derive a bound for the perturbation terms. As a preliminary result we derive trace results for $L^2$-norms on the set of edges $\mathcal{E}_h$.
\begin{lemma} \label{lemmaedgeu}
	The following trace inequalities hold:
	\begin{align}
	\|q_h\|_{L^2(\mathcal{E}_h)} & \lesssim h^{-\frac12} \Vert q_h \Vert_M \quad \text{for all}~q_h \in V_{h,\Theta}^{k},\label{eqedgexi} \\
	\|\bv_h\|_{L^2(\mathcal{E}_h)} & \lesssim h^{-\frac12} \Vert \bv_h \Vert_A \quad \text{for all}~\bv_h \in \bV_{h,\Theta}^{k}.\label{eqedgexiV}
	\end{align}
\end{lemma}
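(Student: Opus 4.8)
The plan is to reduce both trace inequalities to a local (element-wise) estimate and then sum over the cut elements, exploiting the stabilization norms to control the volumetric contributions. First I would recall that every edge $E \in \mathcal{E}_h$ lies in some $\Gamma_T = \Gamma_h \cap \Theta_h(T)$ with $T \in \mathcal{T}_h^\Gamma$, and that $E$ itself is (the $\Theta_h$-image of) a straight segment obtained by intersecting $\Gamma^{\rm lin}$ with a face of the tetrahedron $T$. Thus $E$ is a codimension-two object inside the bulk tetrahedron $\Theta_h(T)$. The key local ingredient is a codimension-two trace inequality of the form
\begin{equation*}
\|w\|_{L^2(E)}^2 \lesssim h^{-2} \|w\|_{L^2(\Theta_h(T))}^2 + \|w\|_{L^2(\Theta_h(T))}\,\|\nabla w\|_{L^2(\Theta_h(T))} + h^2 \|\nabla w\|_{L^2(\Theta_h(T))}^2
\end{equation*}
for $w \in H^1(\Theta_h(T))$, valid uniformly in the position of $\Gamma^{\rm lin}$ in the mesh (this follows by a scaling argument on the reference tetrahedron together with the fact that $D\Theta_h = \bI + O(h)$; alternatively one applies the surface trace inequality \eqref{eqtraceestimate} twice — once to pass from $\Theta_h(T)$ to $\Gamma_T$, once from $\Gamma_T$ to its boundary edges). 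For a finite element function $w_h \in V_{h,\Theta}^k$ one can simplify using inverse estimates: $\|\nabla w_h\|_{L^2(\Theta_h(T))} \lesssim h^{-1}\|w_h\|_{L^2(\Theta_h(T))}$, which collapses the right-hand side to $h^{-2}\|w_h\|_{L^2(\Theta_h(T))}^2$, giving $\|w_h\|_{L^2(E)} \lesssim h^{-1}\|w_h\|_{L^2(\Theta_h(T))}$.

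Next I would sum this local bound over all edges, noting that each tetrahedron $\Theta_h(T)$ has a bounded number of faces (hence of edges $E$), so $\sum_{E} \|q_h\|_{L^2(E)}^2 \lesssim h^{-2} \sum_{T \in \mathcal{T}_h^\Gamma} \|q_h\|_{L^2(\Theta_h(T))}^2 = h^{-2}\|q_h\|_{L^2(\Omega_\Theta^\Gamma)}^2$. Now I invoke the already-proved estimate \eqref{HH3}, namely $\|q_h\|_{L^2(\Omega_\Theta^\Gamma)} \simeq h^{1/2}\|q_h\|_M$, to obtain $\|q_h\|_{L^2(\mathcal{E}_h)}^2 \lesssim h^{-2}\cdot h\,\|q_h\|_M^2 = h^{-1}\|q_h\|_M^2$, which is \eqref{eqedgexi}. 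For the vector inequality \eqref{eqedgexiV} the same computation gives $\|\bv_h\|_{L^2(\mathcal{E}_h)}^2 \lesssim h^{-2}\|\bv_h\|_{L^2(\Omega_\Theta^\Gamma)}^2$, and then I use \eqref{HH2} to see that $h^{-2}\|\bv_h\|_{L^2(\Omega_\Theta^\Gamma)}^2 \lesssim h^{-1}\|\bv_h\|_{L^2(\Gamma_h)}^2 + \|\nabla\bv_h\bn_h\|_{L^2(\Omega_\Theta^\Gamma)}^2$; since $\|\bv_h\|_{L^2(\Gamma_h)}^2 \le A_h(\bv_h,\bv_h)$ and $\rho_u \simeq h^{-1}$ makes $\|\nabla\bv_h\bn_h\|_{L^2(\Omega_\Theta^\Gamma)}^2 \lesssim h\,\rho_u\|\nabla\bv_h\bn_h\|_{L^2(\Omega_\Theta^\Gamma)}^2 \lesssim h\,A_h(\bv_h,\bv_h) \le A_h(\bv_h,\bv_h)$, everything is bounded by $h^{-1}\|\bv_h\|_A^2$, giving \eqref{eqedgexiV}.

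The main obstacle, and the step deserving the most care, is establishing the local codimension-two trace inequality \emph{uniformly} with respect to how $\Gamma^{\rm lin}$ (equivalently, the edge $E$) cuts through the tetrahedron $T$ — a badly positioned cut could in principle produce an edge arbitrarily close to a vertex or nearly aligned with a face, and one must verify the constant does not degenerate. The cleanest route is to avoid an explicit reference-configuration argument for the cut and instead chain the two surface trace inequalities already available as \eqref{eqtraceestimate}: apply it first to bound $\|w\|_{L^2(\Gamma_T)}$ by bulk norms on $\Theta_h(T)$, and then apply the planar analogue of \eqref{eqtraceestimate} (for the curve $E \subset \Gamma_T$, which is itself a Lipschitz surface patch) together with a tangential inverse inequality on $\Gamma_T$ for the finite element function; both of these are stated in the excerpt to hold with constants independent of the cut position. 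This sidesteps the delicate small-cut geometry by inheriting the uniformity from the trace inequalities we are permitted to assume.
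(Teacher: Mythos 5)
Your overall strategy is the same as the paper's: establish the local bound $\|q_h\|_{L^2(E)} \lesssim h^{-1}\|q_h\|_{L^2(\Theta_h(T))}$ for finite element functions, sum over the (boundedly many) edges per element, and then convert $h^{-2}\|q_h\|_{L^2(\Omega_\Theta^\Gamma)}^2$ into $h^{-1}\|q_h\|_M^2$ (resp.\ $h^{-1}\|\bv_h\|_A^2$) via \eqref{L2control}/\eqref{HH3} (resp.\ \eqref{HH2} and $\rho_u\simeq h^{-1}$). The global part of your argument is correct and matches the paper's proof of \eqref{ERT} and the sentence following it.

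The difference lies in how you justify the local edge estimate, and here both of your proposed variants need a correction. First, the codimension-two trace inequality you state for general $w\in H^1(\Theta_h(T))$ is false: $H^1$ functions in three dimensions do not have $L^2$ traces on curves. This is harmless for the lemma itself, since for polynomials the bound $\|w_h\|_{L^2(E)}\lesssim h^{-1}\|w_h\|_{L^2(\Theta_h(T))}$ follows from an $L^\infty$ inverse estimate ($|E|\lesssim h$ times $\|w_h\|_{L^\infty}^2\lesssim h^{-3}\|w_h\|_{L^2(\Theta_h(T))}^2$), but you should not present it as an $H^1$ trace result. Second, your ``cleanest route'' --- chaining through the cut patch $\Gamma_T$ --- does \emph{not} resolve the uniformity issue you correctly identify: a trace inequality from $\Gamma_T$ to its boundary edge $E$ cannot be uniform in the cut position, because $\Gamma_T$ may be an arbitrarily small sliver, whereas \eqref{eqtraceestimate} is uniform precisely because the bulk element $\Theta_h(T)$ has fixed size $\sim h$ independent of the cut. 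The paper's device is to route through a \emph{planar face} $W$ of the tetrahedron with $E\subset W$: since $W$ has fixed diameter $\sim h$, the two-dimensional analogue of \eqref{eqtraceestimate} (a curve cutting a fixed-size triangle) applies uniformly to give $\|q_h\|_{L^2(E)}^2\lesssim h^{-1}\|q_h\|_{L^2(W)}^2+h\|q_h\|_{H^1(W)}^2$, after which an inverse inequality and a standard trace from $\Theta_h(T)$ to its face $W$ finish the local step. Replacing $\Gamma_T$ by $W$ in your chaining argument (or simply using the polynomial $L^\infty$ argument) makes your proof complete.
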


\begin{proof}
	Take $E\in \mathcal{E}_h$ and let $\Gamma_T\in\Gamma_h$ be a corresponding segment of which $E$ is an edge. Let $W$ be a side of the transformed tetrahedron $\Theta_h(T)$ such that $E \subset W$.
	We apply \eqref{eqtraceestimate}  and  a standard FE inverse inequality to obtain
	\begin{equation}\label{ERT} \begin{split}
	\int_E |q_h|^2 \, dl &\lesssim h^{-1} \Vert q_h \Vert^2_{L^2(W)} + h \Vert q_h \Vert^2_{H^1(W)} \lesssim h^{-1} \Vert q_h \Vert^2_{L^2(W)} \\
	& \lesssim h^{-2} \Vert q_h \Vert^2_{L^2(\Theta_h(T))} + \Vert q_h \Vert^2_{H^1(\Theta_h(T))} \lesssim h^{-2} \Vert q_h \Vert^2_{L^2(\Theta_h(T))}.
	\end{split}
       \end{equation}
	Summing over all edges and applying \eqref{L2control} completes the proof for \eqref{eqedgexi}. With very similar arguments, using \eqref{HH2}, one obtains the result \eqref{eqedgexiV}.
\end{proof}
\ \\

For $\bv_h\in\bV_k^k, q_h \in V_h^{k-1}$, we introduce the analogous $A$-norm and $M$-norm corresponding to the $\Gamma^{\rm lin}$  mesh:
\begin{align*} \|\bv_h \|_{A}^2 &:= \Vert \nabla_{\Gamma^{\rm lin}}\bv_h \Vert_{L^2(\Gamma^{\rm lin})}^2 + \Vert \bv_h \Vert_{L^2(\Gamma^{\rm lin})}^2+ \eta \Vert \bn_{\rm lin}\cdot\bv_h \Vert_{L^2(\Gamma^{\rm lin})}^2+\rho_u \Vert \nabla \bv_h \bn_{\rm lin} \Vert_{L^2(\Omega_h^\Gamma)}^2, \\
 \|q_h\|_M^2 & := \|q_h\|_{L^2(\Gammalin)}^2+\rho_p \|\bn_{\rm lin}\cdot \nabla q_h\|_{L^2(\OGamma)}^2.
\end{align*}

\begin{lemma} \label{LemmapartInt}The following estimates hold:
\begin{equation}
  \left|\int_{\Gamma_h}\bv_h \cdot\nabla_{\Gamma_h}q_h\,ds_h+\int_{\Gamma_h}q\operatorname{div}_{\Gamma_h}\bv\,ds_h\right|  \lesssim h\|\bv_h\|_{A}\|q_h\|_M, \label{b_diff}
\end{equation}
for all $\bv_h \in \bU_h,\, q\in Q_h$,
\begin{equation}
  \left|\int_{\Gammalin}\bv_h \cdot\nabla_{\Gammalin}q_h\,ds_h+\int_{\Gammalin}q\operatorname{div}_{\Gammalin}\bv\,ds_h\right| \lesssim h^\frac12 \|\bv_h\|_{A}\|q_h\|_M, \label{b_diffLin}
\end{equation}
for all $\bv_h \in \bV^k_h, \, q_h \in V^{k-1}_h$.
\end{lemma}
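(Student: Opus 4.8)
The plan is to control the two perturbation terms appearing in the integration-by-parts formula \eqref{aux1467} — the curvature term $\sum_T \int_{\Gamma_T}(\bv_h\cdot\bn_h)q_h\divGh\bn_h\,ds$ and the co-normal jump term $\sum_E \int_E[\nu_h\cdot\bv_h]q_h\,dl$ — and show each is bounded by $h\|\bv_h\|_A\|q_h\|_M$ (respectively $h^{1/2}$ on $\Gammalin$). First I would handle the curvature term: since $\divGh\bn_h$ is the (discrete) mean curvature of $\Gamma_h$, it is uniformly bounded in $L^\infty(\Gamma_h)$ because $\Gamma_h$ is an $O(h^{k+1})$-perturbation of the smooth $\Gamma$ (use Lemma~\ref{lemmanormals} and \eqref{distres}); more importantly, the factor $\bv_h\cdot\bn_h$ is small — since $\tilde\bn_h$ approximates $\bn$ to order $h^{k+1}$ and $\bn_h$ to order $h^k$, we have $|\bv_h\cdot\bn_h|\le |\bv_h\cdot\tilde\bn_h| + \|\bn_h-\tilde\bn_h\|_\infty|\bv_h|$, and the penalty contribution $\eta\|\bv_h\cdot\tilde\bn_h\|^2_{L^2(\Gamma_h)}\le\|\bv_h\|_A^2$ with $\eta\simeq h^{-2}$ gives $\|\bv_h\cdot\tilde\bn_h\|_{L^2(\Gamma_h)}\le h\|\bv_h\|_A$. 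Combined with $\|\bn_h-\tilde\bn_h\|_\infty\lesssim h^k$ and $\|q_h\|_{L^2(\Gamma_h)}\le\|q_h\|_M$, Cauchy–Schwarz yields the curvature term $\lesssim h\|\bv_h\|_A\|q_h\|_M$.

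The co-normal jump term is the main obstacle. The key observation is that $[\nu_h\cdot\bv_h]$ does not vanish only because of two effects: the co-normals $\nu_h^\pm$ of adjacent curved patches $\Gamma_{T^\pm}$ differ from the co-normals one would get on a common flat reference configuration, and more fundamentally, for a genuinely tangential field on a smooth surface the jump would vanish, so $[\nu_h\cdot\bv_h]$ measures the failure of tangentiality together with the geometric mismatch. I would exploit that $\bv_h$ is continuous across $E$ (it is a single-valued $H^1$ function on $\Omega^\Gamma_\Theta$), so $[\nu_h\cdot\bv_h]=(\nu_h^++\nu_h^-)\cdot\bv_h$ — and $\nu_h^++\nu_h^-$ is the obstruction: on a smooth surface the two co-normals would be exactly opposite, but on the piecewise-smooth $\Gamma_h$ they differ from antipodal by an angle of order $h^k$ (each co-normal differs from the corresponding co-normal of $\Gamma$ by $O(h^k)$, using that $\Gamma_h$ is an $O(h^{k+1})$ graph over $\Gamma$ in the sense of \eqref{distres} and its gradient analog from Lemma~\ref{lemmanormals}). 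Hence one can write $\nu_h^++\nu_h^-=\alpha\bn_h + O(h^k)\,\tau$ where $\alpha=O(1)$ picks out the normal component and the remainder is tangential of size $h^k$. This splits the edge integrand: the $\bn_h$-part reproduces the small-normal-component estimate $|\bv_h\cdot\bn_h|\lesssim h\|\bv_h\|_A + h^k|\bv_h|$ as above, and the $O(h^k)$-tangential part is absorbed trivially.

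With the integrand pointwise-estimated, I would finish by summing over edges using the edge trace inequalities of Lemma~\ref{lemmaedgeu}: $\|q_h\|_{L^2(\mathcal E_h)}\lesssim h^{-1/2}\|q_h\|_M$ and $\|\bv_h\|_{L^2(\mathcal E_h)}\lesssim h^{-1/2}\|\bv_h\|_A$. Writing $\sum_E\int_E[\nu_h\cdot\bv_h]q_h\,dl$ and inserting $|[\nu_h\cdot\bv_h]|\lesssim h^k|\bv_h| + |$(normal-component contribution)$|$, the crude $h^k$-term contributes $h^k\cdot h^{-1/2}\|\bv_h\|_A\cdot h^{-1/2}\|q_h\|_M = h^{k-1}\|\bv_h\|_A\|q_h\|_M$, which is $\lesssim h\|\bv_h\|_A\|q_h\|_M$ since $k\ge2$. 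For the normal-component contribution one cannot use the crude pointwise bound $h|\bv_h|$ on edges (that would lose a half power from the edge trace); instead I would keep $\bv_h\cdot\tilde\bn_h$ intact, apply the edge trace inequality \eqref{eqtraceestimate} to the scalar function $(\bv_h\cdot\tilde\bn_h)q_h$ (or to the two factors separately with the surface trace estimate) and bound the resulting volume/surface norms by $\eta^{-1/2}\|\bv_h\|_A=h\|\bv_h\|_A$ and $\|q_h\|_M$. Care with powers of $h$ here is exactly what distinguishes the $\Gamma_h$ bound \eqref{b_diff} (order $h$) from the $\Gammalin$ bound \eqref{b_diffLin} (order $h^{1/2}$): on $\Gammalin$ the co-normal jump is only $O(h)$ rather than $O(h^k)$ and $\bn_{\rm lin}$ approximates $\bn$ only to order $h$, so every estimate above degrades by the appropriate power, but the structure of the argument — curvature term plus split co-normal jump, then edge traces — is identical, and one simply repeats it with $\Gamma_h,\bn_h,\tilde\bn_h$ replaced by $\Gammalin,\bn_{\rm lin},\bn_{\rm lin}$ and tracks the weaker geometric bounds.
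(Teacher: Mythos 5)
Your treatment of the curvature term and of the jump term on $\Gamma_h$ matches the paper's proof: bound $\bv_h\cdot\bn_h$ through the penalty term ($\|\bv_h\cdot\tilde\bn_h\|_{L^2(\Gamma_h)}\lesssim\eta^{-1/2}\|\bv_h\|_A\simeq h\|\bv_h\|_A$ plus the $\bn_h-\tilde\bn_h$ correction), and for the edges use the crude bound $|[\nu_h]|\lesssim h^k$ together with the edge trace inequalities of Lemma~\ref{lemmaedgeu}, which gives $h^{k-1}\lesssim h$ for $k\ge 2$. (A small internal inconsistency: if the co-normals differ from antipodal by an angle $O(h^k)$ then the normal coefficient $\alpha$ in your decomposition is $O(h^k)$, not $O(1)$; this does not hurt the $\Gamma_h$ case because the crude route already suffices there.)

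The genuine gap is in \eqref{b_diffLin}. Your mechanism for bounding the tangential part of the co-normal jump is ``each co-normal differs from the corresponding exact co-normal of $\Gamma$ by the order of the normal error.'' On $\Gammalin$ that order is $h$, so this reasoning yields only $|\bP[\nu_h]|\lesssim h$; pushed through the edge traces $\|\bv_h\|_{L^2(\mathcal E_h)}\lesssim h^{-1/2}\|\bv_h\|_A$ and $\|q_h\|_{L^2(\mathcal E_h)}\lesssim h^{-1/2}\|q_h\|_M$ it gives an $O(1)$ contribution, not $O(h^{1/2})$. The proof needs the superconvergence estimate $\|\bP[\nu_h]\|_{L^\infty(\mathcal E_h)}\lesssim h^2$ (the paper's \eqref{improvedest}, quoted from Lemma~3.5 of \cite{ORXimanum}), which rests on a cancellation \emph{between the two facets} meeting at the edge and does not follow from a one-sided ``graph over $\Gamma$'' argument; your phrase ``every estimate degrades by the appropriate power'' hides exactly the step that would break. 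A secondary omission in the same part: to get $\|\bv_h\cdot\bn_{\rm lin}\|_{L^2(\OGamma)}\lesssim h\|\bv_h\|_A$ one cannot apply \eqref{L2control} to $\bv_h\cdot\bn_{\rm lin}$ directly, since $\bn_{\rm lin}$ is piecewise constant and discontinuous across faces, so $\bv_h\cdot\bn_{\rm lin}$ is not a finite element function; the paper first replaces $\bn_{\rm lin}$ by a continuous $\hat\bn_h\in\bV_h^1$ with $\|\bn_{\rm lin}-\hat\bn_h\|_{L^\infty}\lesssim h$ and $\|\nabla\hat\bn_h\|_{L^\infty}\lesssim 1$, and your sketch does not address this.
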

\begin{proof}
We use the identity \eqref{aux1467}. For the second term on the right-hand side in \eqref{aux1467} we use $\max\limits_{T\in\T_h^\Gamma}\|\divGh\bn_h\|_{L^\infty(\Gamma_T)}\lesssim1$, $\|\bn_h - \tilde \bn_h\|_{L^\infty(\Gamma_h)} \lesssim h$,  $\eta \sim h^{-2}$ and the definition of the $A$ and $M$ norms to get
\begin{align*}
& \left|\sum_{T\in\T_h^\Gamma}\int_{\Gamma_T}(\bv_h\cdot\bn_h)q_h\,\divGh\bn_h\,ds\right|\lesssim
\|\bv_h\cdot\bn_h\|_{L^2(\Gamma_h)} \|q_h\|_{L^2(\Gamma_h)} \\
& \lesssim \left(\|\bv_h\cdot \tilde \bn_h\|_{L^2(\Gamma_h)}+ h \|\bv_h\|_{L^2(\Gamma_h)}\right) \|q_h\|_M \lesssim h \|\bv_h\|_{A}\|q_h\|_M.
\end{align*}
The same holds for  $\Gammalin$ instead of $\Gamma_h$.
We now consider the third term on the right-hand side in \eqref{aux1467}.
For the surface approximation $\Gamma_h$ we have  $|[\nu_h]|\lesssim h^{k}$ with $k \geq 2$, and thus with $[\nu_h\cdot\bv_h]=\bv_h \cdot [\nu_h]$ we get, using Lemma~\ref{lemmaedgeu},
\[
 \left|\sum_{E\in\mathcal{E}_h}\int_{E}[\nu_h\cdot\bv_h] q_h\,dl\right| \lesssim h^2 \|\bv_h\|_{L^2(\mathcal{E}_h)}\|q_h\|_{L^2(\mathcal{E}_h)} \lesssim h \|\bv_h\|_A \|q_h\|_M.
\]
Combining these results, we obtain the estimate \eqref{b_diff}.
Finally we consider the third term on the right-hand side in \eqref{aux1467} for the case $\Gammalin$, which requires a more subtle treatment because we only have $|[\nu_h]|\lesssim h$. From Lemma~3.5 in~\cite{ORXimanum} we have (with $\mathcal{E}_h$ the set of edges in $\Gammalin$):
\begin{equation} \label{improvedest}
 \|\bP [\nu_h]\|_{L^\infty(\mathcal{E}_h)} \lesssim h^2.
\end{equation}
Given $E \in \mathcal{E}_h$, we split
\[
[\nu_h\cdot\bv_h]=[\nu_h] \bv_h= [\nu_h]\cdot\bP^{+}_{\rm lin}\bv_h+ [\nu_h]\cdot\bn_{\rm lin}^{+}(\bv_h\cdot\bn_{\rm lin}^{+}),
\] where $\bP^{+}_{\rm lin}$ and $\bn_{\rm lin}^{+}$ is the projector and normal to $\Gammalin$ from one (arbitrary chosen) side of the edge $E$.
Using  $|\bP_{\rm lin}-\bP|\lesssim h$, $|[\nu_h]|\lesssim h$  and \eqref{improvedest}, we get
\begin{equation}\label{aux660}
|[\nu_h]\cdot\bP^{+}_{\rm lin}\bv_h|=|\big(\bP^{+}_{\rm lin}-\bP)[\nu_h] + \bP[\nu_h]\big)\cdot\bv_h|\lesssim h^{2}|\bv_h|\quad\text{on}~E,
\end{equation}
and also
\begin{equation}\label{aux664}
| [\nu_h]\cdot\bn_{\rm lin}^{+}(\bv_h\cdot\bn_{\rm lin}^{+})|\lesssim h|\bv_h\cdot\bn_{\rm lin}^{+}|.
\end{equation}
Using \eqref{aux660}, \eqref{aux664} and the same arguments as in \eqref{ERT} we get
\begin{equation}\label{aux1466}
\left|\sum_{E\in\mathcal{E}_h}\int_{E}[\nu_h\cdot\bv_h] q_h\,dl \right| \lesssim \left( h^2 \|\bv_h\|_{L^2(\mathcal{E}_h)} + \|\bv_h \cdot \bn_{\rm lin}\|_{L^2(\OGamma)}\right) h^{-\frac12}\|q_h \|_M.
\end{equation}
We can approximate  the piecewise constant vector $\bn_{\rm lin}$ by $\hat \bn_h \in \bV_h^1$ such that $\|\bn_{\rm lin} - \hat \bn_h\|_{L^\infty(\Omega_h^\Gamma)} \lesssim h$ and $\|\nabla \hat \bn_{h}\|_{L^\infty(\Omega_h^\Gamma)} \lesssim 1$. Using this, triangle inequalities, \eqref{L2control} and \eqref{HH4a} we get
\begin{align*}
& \|\bv_h \cdot \bn_{\rm lin}\|_{L^2(\OGamma)}  \lesssim \|\bv_h \cdot \hat \bn_h\|_{L^2(\Omega_h^\Gamma)} +h \|\bv_h\|_{L^2(\Omega_h^\Gamma)} \\
 & \lesssim h^\frac12 \|\bv_h \cdot \bn_{\rm lin}\|_{L^2(\Gammalin)} + h^{\frac32} \|\bv_h\|_{L^2(\Gammalin)} + h \|\nabla \bv_h \bn_{\rm lin}\|_{L^2(\Omega_h^\Gamma)} \lesssim h \|\bv_h\|_A.
\end{align*}
Using this bound and the estimate \eqref{eqedgexiV} in \eqref{aux1466} completes the proof of \eqref{b_diffLin}.
\end{proof}

\subsubsection{Inf-sup condition for $\Gamma_h$ follows from inf-sup condition for $\Gamma^{\rm lin}$} \label{sectRes2}
\begin{lemma} Take  $k\ge2$.  For $h >0$ sufficiently small,  \eqref{infsup} follows from
\begin{equation} \label{infsuplin}
	\|q_h\|_{L^2(\Gamma^{\rm lin})} \lesssim \sup_{\bv_h \in \bV_h^k} \frac{ \int_{\Gammalin} \bv_h \cdot \nabla_{\Gammalin} q_h \, ds_h}{\|\bv_h \|_{A}} + h^{\frac12}\left\|\bn_{\rm lin}\cdot\nabla  q_h\right\|_{L^2(\Omega_h^{\Gamma})}~\forall q_h \in V^{k-1}_h.
	\end{equation}
\end{lemma}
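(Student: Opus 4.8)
The plan is to transfer everything from $\Gamma_h$ back to $\Gamma^{\rm lin}$ through the parametric map $\Theta_h$, which is a diffeomorphism of $\OGamma$ onto $\OTheta$ with $D\Theta_h=\bI+\mathcal{O}(h)$ and Jacobians equal to $1+\mathcal{O}(h)$ (see \cite{grande2017higher}). Given $q_h\in Q_h$ I would set $\hat q_h:=q_h\circ\Theta_h\in V_h^{k-1}$ (no mean-value constraint is needed on $\hat q_h$, which is fortunate), and for the test functions use the bijection $\bU_h\ni\bv_h\leftrightarrow\hat\bv_h:=\bv_h\circ\Theta_h\in\bV_h^k$. Applying the assumed inf-sup bound \eqref{infsuplin} to $\hat q_h$ then reduces the claim to three comparisons: (i) the pressure norms, $\|q_h\|_{L^2(\Gamma_h)}\simeq\|\hat q_h\|_{L^2(\Gammalin)}$; (ii) the pressure stabilization terms, $h^{\frac12}\|\bn_{\rm lin}\cdot\nabla\hat q_h\|_{L^2(\OGamma)}$ versus $\tilde s_h(q_h,q_h)^{\frac12}$; and (iii) the bilinear-form term $\int_{\Gammalin}\hat\bv_h\cdot\nabla_{\Gammalin}\hat q_h\,ds$ versus $b_h(\bv_h,q_h)$, together with the velocity-norm equivalence $\|\bv_h\|_A\simeq\|\hat\bv_h\|_A$ needed to match the suprema.

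For (i) and the $A$-norm equivalence in (iii) I would perform a change of variables under $\Theta_h$ and use Lemma~\ref{lemmanormals} (which gives $\bn_h\circ\Theta_h=\bn_{\rm lin}+\mathcal{O}(h)$, hence $\|\bP_h-\bP_{\rm lin}\|_\infty\lesssim h$ and, with the assumptions on $\tilde{\bn}_h$, $\|\bn_h-\tilde{\bn}_h\|_\infty\lesssim h$), the volume control \eqref{L2control}, the surface inverse inequalities \eqref{HH4}, \eqref{HH4a}, and the discrete Korn estimate \eqref{EstEll} (plus its obvious analogue on $\Gammalin$). The point is that every geometric mismatch — distinct projectors, the distortion $D\Theta_h-\bI$, the Jacobian, the use of $\tilde{\bn}_h$ rather than $\bn_{\rm lin}$ in the penalty term — produces only an $\mathcal{O}(h)$ relative perturbation; with the scaling \eqref{choicerho} ($\rho_u\simeq h^{-1}$, $\rho_p\simeq h$, $\eta\simeq h^{-2}$) these are dominated by the $L^2(\Gammalin)$-parts of the norms and absorbed for $h$ small. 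For (ii) the same computation gives $\bn_h\cdot\nabla q_h\big|_{\Theta_h(x)}=\bn_{\rm lin}\cdot\nabla\hat q_h(x)+\mathcal{O}(h)|\nabla\hat q_h(x)|$; bounding $\|\nabla\hat q_h\|_{L^2(\OGamma)}\lesssim h^{-\frac12}\|\hat q_h\|_{L^2(\Gammalin)}+\|\bn_{\rm lin}\cdot\nabla\hat q_h\|_{L^2(\OGamma)}$ via an inverse inequality and \eqref{L2control}, and using $\rho_p\simeq h$, yields $h^{\frac12}\|\bn_{\rm lin}\cdot\nabla\hat q_h\|_{L^2(\OGamma)}\lesssim\tilde s_h(q_h,q_h)^{\frac12}+h\,\|q_h\|_{L^2(\Gamma_h)}$ (and the reverse inequality with roles swapped).

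The crux is (iii). A direct change of variables in $\int_{\Gamma_h}\bv_h\cdot\gradGh q_h\,ds_h$ is too crude: it leaves a surface gradient on the pressure, for which only $\|\gradGh q_h\|_{L^2(\Gamma_h)}\lesssim h^{-1}\|q_h\|_M$ is available, so the $\mathcal{O}(h)$ geometric consistency degrades to $\mathcal{O}(1)$. Instead I would first move the derivative off the pressure on both surfaces using the integration-by-parts perturbation estimates of section~\ref{sectintpart}: \eqref{b_diffLin} gives $\int_{\Gammalin}\hat\bv_h\cdot\nabla_{\Gammalin}\hat q_h\,ds=-\int_{\Gammalin}\hat q_h\operatorname{div}_{\Gammalin}\hat\bv_h\,ds+\mathcal{O}(h^{\frac12})\|\hat\bv_h\|_A\|\hat q_h\|_M$, and \eqref{b_diff} gives $b_h(\bv_h,q_h)=-\int_{\Gamma_h}q_h\,\divGh\bv_h\,ds_h+\mathcal{O}(h)\|\bv_h\|_A\|q_h\|_M$. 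Now both remaining integrals are derivative-free in the pressure, so the change of variables $\Theta_h$ for $\int q\,\divG\bv$ loses nothing in the critical factor: using $\nabla\bv_h\circ\Theta_h=\nabla\hat\bv_h\,D\Theta_h^{-1}$, Lemma~\ref{lemmanormals}, $\|\bP_h-\bP_{\rm lin}\|_\infty\lesssim h$ and $\|\operatorname{div}_{\Gammalin}\hat\bv_h\|_{L^2(\Gammalin)}\lesssim\|\hat\bv_h\|_{H^1(\Gammalin)}\lesssim\|\hat\bv_h\|_A$, one gets $\int_{\Gamma_h}q_h\,\divGh\bv_h\,ds_h=\int_{\Gammalin}\hat q_h\,\operatorname{div}_{\Gammalin}\hat\bv_h\,ds+\mathcal{O}(h)\|\hat\bv_h\|_A\|\hat q_h\|_M$. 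Chaining the three identities and dividing by $\|\bv_h\|_A\simeq\|\hat\bv_h\|_A$, then taking the supremum over $\bv_h\in\bU_h$, gives $\sup_{\hat\bv_h\in\bV_h^k}\frac{|\int_{\Gammalin}\hat\bv_h\cdot\nabla_{\Gammalin}\hat q_h|}{\|\hat\bv_h\|_A}\lesssim\sup_{\bv_h\in\bU_h}\frac{|b_h(\bv_h,q_h)|}{\|\bv_h\|_A}+h^{\frac12}\|q_h\|_M$.

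Finally I would assemble: \eqref{infsuplin} for $\hat q_h$, combined with (i), the bound just obtained and (ii), yields $\|q_h\|_{L^2(\Gamma_h)}\lesssim\sup_{\bv_h\in\bU_h}\frac{|b_h(\bv_h,q_h)|}{\|\bv_h\|_A}+h^{\frac12}\|q_h\|_M+\tilde s_h(q_h,q_h)^{\frac12}+h\,\|q_h\|_{L^2(\Gamma_h)}$. Since $\|q_h\|_M^2=\|q_h\|_{L^2(\Gamma_h)}^2+\tilde s_h(q_h,q_h)$ by \eqref{defnorms}, the terms $h^{\frac12}\|q_h\|_{L^2(\Gamma_h)}$ and $h\,\|q_h\|_{L^2(\Gamma_h)}$ are absorbed into the left-hand side for $h$ small, leaving $\|q_h\|_{L^2(\Gamma_h)}\lesssim\sup_{\bv_h\in\bU_h}\frac{|b_h(\bv_h,q_h)|}{\|\bv_h\|_A}+\tilde s_h(q_h,q_h)^{\frac12}$; adding $\tilde s_h(q_h,q_h)^{\frac12}$ on both sides and invoking the identity for $\|q_h\|_M$ once more gives exactly \eqref{infsup}. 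The main obstacle is step (iii): it is precisely why the integration-by-parts machinery of section~\ref{sectintpart} is needed, since the $\mathcal{O}(h)$ geometric consistency of a plain change of variables is destroyed by the inverse inequality required to handle $\gradGh q_h$; a secondary, purely technical, difficulty is verifying the $A$-norm equivalence $\|\bv_h\|_A\simeq\|\hat\bv_h\|_A$, which requires combining the discrete Korn inequality with the geometric and stabilization estimates.
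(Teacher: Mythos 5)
Your proposal is correct and follows essentially the same route as the paper: pull back to $\Gamma^{\rm lin}$ via $\Theta_h$, establish the $L^2$-, $M$- and $A$-norm equivalences using Lemma~\ref{lemmanormals}, \eqref{L2control} and the discrete Korn inequality, and — crucially — use the integration-by-parts perturbation bounds \eqref{b_diff} and \eqref{b_diffLin} to move the derivative off the pressure on both surfaces before changing variables, exactly as the paper does to avoid the $h^{-1}$ loss from \eqref{HH4}. The only (harmless) difference is that you assert the two-sided equivalence $\|\bv_h\|_A\simeq\|\hat\bv_h\|_A$, whereas the paper only needs and proves the one direction $\|\bu_h\|_A\lesssim\|\bv_h\|_A$ in \eqref{aux723}.
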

\begin{proof}
 For $(\bu_h,r_h) \in \bU_h \times Q_h$,  we transform back to the piecewise polynomial functions: $\bu_h=\bv_h \circ (\Theta_h)^{-1}$, $\bv_h \in \bV_h^k$, $r_h= q_h \circ \Theta_h^{-1}$, $q_h \in V_h^{k-1}$. Using $|1-{\rm det}(D\Theta_h)| \lesssim h^2$ (change in surface measure) it follows that $\|q_h\|_{L^2(\Gammalin)} \sim \|r_h\|_{L^2(\Gamma_h)}$ holds. Using the change of variables, $\|\bn_{\rm lin}- \bn_h\|_{L^\infty(\Omega_h^{\Gamma})} \lesssim h$, a finite element inverse inequality and \eqref{HH3}, we estimate
\begin{equation}\label{aux725}
\begin{split}
  \|\bn_{\rm lin} \cdot \nabla q_h\|_{L^2(\Omega_h^{\Gamma})}&\lesssim \|\bn_h \cdot \nabla r_h\|_{L^2(\Omega_\Theta^{\Gamma})}+ ch\|\nabla r_h\|_{L^2(\Omega_\Theta^{\Gamma})}\\ &\lesssim  \|\bn_h \cdot \nabla r_h\|_{L^2(\Omega_\Theta^{\Gamma})}+ c h^{\frac12}\|r_h\|_{M}.
\end{split}
\end{equation}
Hence we get $\|q_h\|_M \lesssim \|r_h\|_M$ and with the same arguments $\|r_h\|_M \lesssim \|q_h\|_M$. Using $\|I- D\Theta_h\|_{\infty} \lesssim h$, $\|\bP_{\rm lin}- \bP_h\| \lesssim h$ and a discrete Korn inequality \cite[Lemma 5.16]{jankuhn2019trace} $\|\bv\|_{H^1(\Gamma^{\rm lin})} \lesssim \|\bv\|_A$ we obtain
\begin{equation}\label{aux723}
 \|\bu_h\|_A \lesssim \|\bv_h\|_A.
\end{equation}
 Thanks to \eqref{b_diff} we get
\[
b_h(\bu_h,r_h)= \int_{\Gamma_h} \bu_h\cdot\gradGh r_h \, ds_h \geq -\int_{\Gamma_h} r_h \divGh \bu_h \, ds_h - c h \|\bu_h\|_A \|r_h\|_{M}.
\]
Note that
\[
 {\Div}_{\Gammalin} \bv_h = {\rm tr}(\bP_{\rm lin} \nabla \bv_h \bP_{\rm lin})={\rm tr}(\bP_{\rm lin} D\Theta_h^T \nabla \bu_h \circ \Theta_h \bP_{\rm lin})
= \divGh \bu_h \circ \Theta_h  +E,\]
with $|E| \lesssim h \|\nabla \bu\|$. Using this and the discrete Korn's inequality yields
\[
\int_{\Gamma_h} r_h \divGh \bu_h \, ds_h = \int_{\Gamma^{\rm lin}} q_h {\rm div}_{\Gamma^{\rm lin}} \bv_h \, ds_h + \tilde E_h, \quad |\tilde E_h| \lesssim h \|\bu_h\|_A \|r_h\|_M.
\]
Using \eqref{b_diffLin} and \eqref{aux723} we thus obtain
\begin{align*}
& b(\bu_h,r_h)  \geq -\int_{\Gamma^{\rm lin}} q_h {\rm div}_{\Gamma^{\rm lin}} \bv_h \, ds_h - c h\|\bu_h\|_A \|r_h\|_{M}  \\
 & \geq  \int_{\Gamma^{\rm lin}} \bv_h \cdot \nabla_{\Gamma^{\rm lin}} q_h  \, ds_h- c h^\frac12\|\bv_h\|_A \|q_h\|_{M} - c h\|\bu_h\|_A \|r_h\|_{M}\\
 &  \geq  \int_{\Gamma^{\rm lin}} \bv_h \cdot  \nabla_{\Gamma^{\rm lin}} q_h  \, ds_h- c h^\frac12\|\bv_h\|_A \|r_h\|_{M}.
\end{align*}
Using this, \eqref{infsuplin} and \eqref{aux725} yields
\begin{align*}
 \|r_h\|_M & \simeq \|q_h\|_{M} \lesssim  \sup_{\bu_h \in \bU_h} \frac{ b(\bu_h,r_h)}{\|\bu_h \|_{A}} + h^{\frac12}\left(\int_{\Omega_h^{\Gamma}}|\bn_{\rm lin}\cdot\nabla  q_h|^2 \right)^\frac12 + h^\frac12 \|r_h\|_M  \\
 & \lesssim  \sup_{\bu_h \in \bU_h} \frac{ b(\bu_h,r_h)}{\|\bu_h \|_{A}} + h^{\frac12}\left(\int_{\Omega_\Theta^{\Gamma}}|\bn_{h}\cdot\nabla  r_h|^2 \right)^\frac12 + h^\frac12 \|r_h\|_M.
\end{align*}
Hence, for $h>0$ sufficiently small
 \eqref{infsup} holds.
\end{proof}

\subsubsection{Reformulation of inf-sup condition for $\Gamma^{\rm lin}$} \label{sectRes3}
We use a standard technique (Verf\"urth's trick) to derive a more convenient formulation of \eqref{infsuplin}. In this derivation the inf-sup property \eqref{LBB} of the continuous problem is used. There are some technical issues to deal with, because  \eqref{LBB} holds for $\Gamma$ and \eqref{infsuplin} is formulated with the approximation $\Gammalin$ of $\Gamma$.
We introduce $\|q_h\|_{1,h}^2:= \sum_{T \in \T_h^{\Gamma}} h_T \|\nabla q_h\|_{L^2(T)}^2$.
\begin{lemma}\label{lemmaequiv}
Take $k \geq 2$. The inf-sup condition for $\Gamma^{\rm lin}$ \eqref{infsuplin} is equivalent to
\begin{align}
  \|q_h\|_{1,\,h} & \lesssim \sup_{\bv_h \in \bV_h^k} \frac{\int_{\Gammalin} \bv_h \cdot \nabla_{\Gamma^{\rm lin}} q_h \, ds_h}{\|\bv_h \|_{A}} + h^\frac12\left\|\bn_{\rm lin}\cdot\nabla  q_h\right\|_{L^2(\Omega_h^{\Gamma})}~\forall~ q_h \in V^{k-1}_h.\label{LBB2A}
\end{align}
\end{lemma}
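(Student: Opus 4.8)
The plan is to prove the equivalence of \eqref{infsuplin} and \eqref{LBB2A} by establishing the two-sided estimate
\[
c_1 \|q_h\|_{1,h} \;\le\; \|q_h\|_{L^2(\Gammalin)} + h^{\frac12}\|\bn_{\rm lin}\cdot\nabla q_h\|_{L^2(\OGamma)} \;\le\; c_2\bigl(\|q_h\|_{1,h} + \text{(same stabilization term)}\bigr)
\]
modulo the right-hand side supremum in each case, so that the two inf-sup statements coincide up to harmless perturbations. The key observation driving this is that $\|q_h\|_{1,h}$ is the tangential-plus-stabilized object one naturally gets from Verf\"urth's trick, while $\|q_h\|_{L^2(\Gammalin)}$ is the $L^2$ part of the $M$-norm; the splitting of the full ambient gradient on a cut element into tangential and normal directions, combined with \eqref{eqtraceestimate} and Lemma~\ref{lemmastabilizationinequality}, lets one pass between the two at the cost of $h^{\frac12}$ factors that are exactly absorbed by the stabilization term.

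Concretely, first I would show that \eqref{LBB2A} implies \eqref{infsuplin}: this is the easy direction, since on each cut element $\Theta_h(T)$ one has $\|q_h\|_{L^2(\Gammalin)}^2 \lesssim h_T^{-1}\|q_h\|_{L^2(T)}^2 + h_T\|\nabla q_h\|_{L^2(T)}^2$ by the trace inequality, and combining the second term with $\|q_h\|_{1,h}$ and the first term via \eqref{L2control} (to re-express $\|q_h\|_{L^2(\OGamma)}$ through $\|q_h\|_{L^2(\Gammalin)}$ and the normal derivative) gives $\|q_h\|_{L^2(\Gammalin)} \lesssim \|q_h\|_{1,h} + h^{\frac12}\|\bn_{\rm lin}\cdot\nabla q_h\|_{L^2(\OGamma)}$ after absorbing a small term for $h$ small; the right-hand sides of \eqref{infsuplin} and \eqref{LBB2A} are literally identical, so the implication is immediate once $\|q_h\|_{L^2(\Gammalin)}$ is bounded by $\|q_h\|_{1,h}$ plus the stabilization.

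The reverse implication, that \eqref{infsuplin} implies \eqref{LBB2A}, is the substantive part and is where Verf\"urth's trick enters: I would split $q_h = \bar q_h + q_h^0$ where $\bar q_h$ is the mean value of $q_h$ over $\Gammalin$ (so $q_h^0 \in L^2_0(\Gammalin)$ — or rather the lift $(q_h^0)^l$ lies in $L^2_0(\Gamma)$ up to an $O(h^{k+1})$ correction). Applying the continuous inf-sup condition \eqref{LBB} to $(q_h^0)^l$ produces a tangential field $\bw \in \bV_T$ on $\Gamma$ with $b(\bw,(q_h^0)^l) \gtrsim \|\bw\|_{H^1(\Gamma)}\|(q_h^0)^l\|_{L^2(\Gamma)}$; I would then take its interpolant (or a suitable extension/projection) $\bw_h \in \bV_h^k$, use Lemma~\ref{lemmascalarapproximationerror} and the norm equivalences between $\Gamma$ and $\Gammalin$ to transfer $b(\bw,\cdot)$ to $\int_{\Gammalin}\bw_h\cdot\nabla_{\Gammalin}q_h\,ds_h$ with controlled error, and estimate $\|\bw_h\|_A \lesssim \|\bw\|_{H^1(\Gamma)}$. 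This controls the $L^2(\Gammalin)$ part of $q_h$ (hence the constant $\bar q_h$ too, using $\int_\Gamma g = 0$-type normalization of the test functions) by the supremum in \eqref{LBB2A}. Separately, the $\|q_h\|_{1,h}$ norm is bounded by $\|q_h\|_{L^2(\Gammalin)}$ using a finite-element inverse inequality on each cut element together with \eqref{eqtraceestimate} and \eqref{L2control}, again up to the $h^{\frac12}$-weighted normal-derivative term. Combining, $\|q_h\|_{1,h} \lesssim \|q_h\|_{L^2(\Gammalin)} + h^{\frac12}\|\bn_{\rm lin}\cdot\nabla q_h\|_{L^2(\OGamma)}$, and then \eqref{infsuplin} bounds the right-hand side by the supremum plus stabilization, yielding \eqref{LBB2A}.

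The main obstacle I expect is the bookkeeping of geometric perturbation terms when transferring the continuous inf-sup function $\bw$ from $\Gamma$ to $\Gammalin$: one must verify that the errors in the surface measure ($|1 - \mathrm{det}| \lesssim h^2$), in the projectors ($|\bP - \bP_{\rm lin}| \lesssim h$), in the normals, and in the mean-value correction (controlled by \eqref{distres} at order $h^{k+1}$) are all of strictly smaller order than the terms they perturb, so that they can be absorbed for $h$ sufficiently small — and in particular checking that the normal-derivative quantity $h^{\frac12}\|\bn_{\rm lin}\cdot\nabla q_h\|_{L^2(\OGamma)}$ survives unchanged on both sides rather than acquiring spurious $h$-powers. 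A secondary technical point is ensuring the interpolant $\bw_h$ of $\bw$, which is only $H^1$ in general, can be handled — here one uses that $\bw^e$ is $H^{k+1}$-regular when $\Gamma$ and the data are smooth, or alternatively a Scott--Zhang type quasi-interpolation bound combined with the trace inequality \eqref{eqtraceestimate}, exactly as in the $k=2$ argument of \cite{olshanskii2019inf} that this lemma is extending.
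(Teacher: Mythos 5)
You have the two implications the wrong way around, and the one you call ``easy'' rests on a false inequality. The estimate you propose, $\|q_h\|_{L^2(\Gammalin)} \lesssim \|q_h\|_{1,h} + h^{\frac12}\|\bn_{\rm lin}\cdot\nabla q_h\|_{L^2(\OGamma)}$, cannot hold: for $q_h$ (nearly) constant the right-hand side (nearly) vanishes while the left-hand side does not, and more structurally $\|q_h\|_{1,h}$ is an $h$-weighted gradient seminorm, which is \emph{weaker} than $\|q_h\|_{L^2(\Gammalin)}$, not stronger. Your derivation of it is circular: the trace inequality \eqref{eqtraceestimate} gives $\|q_h\|_{L^2(\Gammalin)}^2 \lesssim h^{-1}\|q_h\|_{L^2(\OGamma)}^2 + \|q_h\|_{1,h}^2$, and feeding \eqref{L2control} into the first term reproduces $C\|q_h\|_{L^2(\Gammalin)}^2$ on the right with an $O(1)$ constant $C$ coming from two successive generic inequalities --- there is no factor of $h$ in front of it, so nothing can be absorbed. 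Consequently your claimed proof of \eqref{LBB2A} $\Rightarrow$ \eqref{infsuplin} collapses; this is in fact the \emph{hard} direction, and it is exactly here (not in the converse) that Verf\"urth's trick is needed: one uses the continuous inf-sup property \eqref{LBB} to produce $\bv\in\bV_T$ with $\int_\Gamma\bv\cdot\gradG q_h^\ell\,ds=\|q_h^\ell\|_{L^2(\Gamma)}^2$, interpolates its normal extension (Cl\'ement) into $\bV_h^k$ with $\|\bv_h\|_A\lesssim\|q_h\|_{L^2(\Gammalin)}$, and after the geometric perturbation bookkeeping arrives at $\|q_h\|_{L^2(\Gammalin)} - c\|q_h\|_{1,h} \lesssim \sup_{\bv_h}\int_{\Gammalin}\bv_h\cdot\nabla_{\Gammalin}q_h\,ds_h/\|\bv_h\|_A$; only then is \eqref{LBB2A} invoked to control the $\|q_h\|_{1,h}$ term and yield \eqref{infsuplin}.

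The direction \eqref{infsuplin} $\Rightarrow$ \eqref{LBB2A} is the easy one, and the correct one-line argument is the reverse of what you attempted: a finite element inverse inequality gives $\|q_h\|_{1,h}\lesssim h^{-\frac12}\|q_h\|_{L^2(\OGamma)}$, and \eqref{L2control} then gives $\|q_h\|_{1,h}\lesssim \|q_h\|_{L^2(\Gammalin)}+h^{\frac12}\|\bn_{\rm lin}\cdot\nabla q_h\|_{L^2(\OGamma)}$, so \eqref{infsuplin} immediately upgrades to \eqref{LBB2A}. You do in fact write this correct inequality at the very end of your third paragraph, but you deploy it inside the wrong implication and surround it with a redundant Verf\"urth argument (if \eqref{infsuplin} is the hypothesis, the $L^2(\Gammalin)$ norm is already controlled by the supremum --- there is nothing left for the continuous inf-sup to do). The ingredients you list (continuous inf-sup, interpolation of the extended test field, the $|1-\det|$, $|\bP-\bP_{\rm lin}|$ perturbations) are the right ones, but they must be reassigned to the implication \eqref{LBB2A} $\Rightarrow$ \eqref{infsuplin}; as written the proposal does not prove either implication of the equivalence correctly.
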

\begin{proof}
From a finite element inverse inequality and~\eqref{L2control} we get
 \[ \begin{split} \Big(\sum_{T \in \T_h^{\Gamma}} h_T \|\nabla q_h\|_{L^2(T)}^2\Big)^{\frac12}  & \lesssim h^{-\frac12} \|q_h\|_{L^2(\Omega_h^{\Gamma})} \\ & \lesssim \|q_h\|_{L^2(\Gammalin)} + h^\frac12\|\bn_{\rm lin}\cdot \nabla q_h\|_{L^2(\Omega_h^{\Gamma})} \quad \text{for all}~q_h \in V_h^{k-1}.
\end{split}\]
Hence, \eqref{infsuplin} implies \eqref{LBB2A}.

We now derive \eqref{LBB2A} $\Rightarrow$ \eqref{infsuplin}.
Consider $ q_h \in V_h^{k-1}$ and $q_h^\ell \in H^1(\Gamma)$, the  lifting of $q_h$ from $\Gamma^{\rm lin}$ to $\Gamma$. Thanks to the inf-sup property for the continuous problem, there exists
$\bv\in\bV_T$ such that
\begin{equation} \label{aux10A}
 \int_\Gamma \bv \cdot \gradG q_h^\ell \, ds  = \| q_h^\ell\|_{L^2(\Gamma)}^2 \quad\text{and}\quad  \|\bv\|_{H^1(\Gamma)} \lesssim \| q_h^\ell\|_{L^2(\Gamma)}\lesssim \| q_h\|_{L^2(\Gammalin)}.
\end{equation}
We consider  $\bv^e\in H^1(\mathcal{O}_h(\Gamma))$, a normal  extension of $\bv$ off the surface to a neighborhood $\mathcal{O}_h(\Gamma)$ of width $O(h)$ such that  $\Omega_h^{\Gamma}\subset\mathcal{O}_h(\Gamma)$.
Take $\bv_h\coloneqq  I_h ( \bv^e) \in \bV_h^2$, where $I_h : H^1(\mathcal{O}_h(\Gamma))^3\to \bV_h^2$ is the Cl\'{e}ment interpolation operator.
By standard arguments (see, e.g., \cite{reusken2015analysis}) based  on stability and approximation properties of $I_h ( \bv^e)$, one gets
\begin{equation} \label{aux1AA}
\begin{split}
 \|\bv_h\|_A^2&=\|I_h ( \bv^e)\|_A^2\\
~ & \lesssim \|I_h ( \bv^e)\|_{H^1(\Gamma^{\rm lin})}^2+h^{-2}\|I_h ( \bv^e)\cdot\bn^{\rm lin}\|_{L^2(\Gamma^{\rm lin})}^2+ h^{-1} \|\nabla (I_h ( \bv^e))\bn^{\rm lin}\|_{L^2(\OGamma)}^2\\
{\footnotesize  \eqref{eqtraceestimate}}~  &\lesssim \sum_{T\in \cT_h^\Gamma} h_T^{-1}\|I_h ( \bv^e)\|_{H^1(T)}^2+h^{-2}\|\big(I_h ( \bv^e)-\bv^e\big)\cdot \bn^{\rm lin}\|_{L^2(\Gamma^{\rm lin})}^2 \\
&\qquad+h^{-2}\|\bv^e\cdot(\bn^{\rm lin}-\bn)\|_{L^2(\Gamma^{\rm lin})}^2\\
{\footnotesize \eqref{eqtraceestimate}}~  &\lesssim \sum_{T\in \cT_h^\Gamma} h_T^{-1}\|\bv^e\|_{H^1(\omega(T))}^2+   h^{-2}  \sum_{T\in \cT_h^\Gamma} h_T^{-1}\|I_h ( \bv^e)-\bv^e\|_{L^2(T)}^2\\ &\qquad
 +\|\bv^e\|_{L^2(\Gamma^{\rm lin})}^2 +h^{-2}  \sum_{T\in \cT_h^\Gamma} h_T \|I_h ( \bv^e)-\bv^e\|_{H^1(T)}^2\\
 &\lesssim    \sum_{T\in \cT_h^\Gamma} h_T^{-1}\|\bv^e\|_{H^1(\omega(T))}^2+\|\bv^e\|_{L^2(\Gammalin)}^2\lesssim    h^{-1}\|\bv^e\|_{H^1(\OGamma)}^2+\|\bv\|_{L^2(\Gamma)}^2\\
{\footnotesize \eqref{lemmasobolevnormsneighborhood}}~  & \lesssim \|\bv\|_{H^1(\Gamma)}^2 \lesssim \|q_h\|_{L^2(\Gammalin)}^2.
\end{split}
\end{equation}

Using the trace inequality \eqref{eqtraceestimate} and approximation properties of $\bv_h=I_h ( \bv^e)$  one gets
\begin{equation}\label{aux1dA}
 \|\bv^e - \bv_h \|_{L^2(\Gammalin)}\lesssim h\|\bv\|_{H^1(\Gamma)}.
\end{equation}
We now consider the splitting
\begin{equation}\label{aux774}
 \int_{\Gammalin} \bv_h \cdot \nabla_{\Gammalin} q_h \, ds_h = \int_{\Gammalin} \bv^e \cdot\nabla_{\Gammalin} q_h \, ds_h +\int_{\Gammalin} (\bv_h -\bv^e) \cdot\nabla_{\Gammalin} q_h \, ds_h.
\end{equation}
The second term can be estimated using \eqref{aux1dA} and \eqref{aux10A}:
\[
  \left|\int_{\Gammalin} (\bv_h -\bv^e) \cdot\nabla_{\Gammalin} q_h \, ds_h \right| \lesssim \|q_h\|_{L^2(\Gammalin)} \|q_h\|_{1,h}.
\]
For lifting the first term from $\Gammalin$ to $\Gamma$ we use transformation rules, cf., e.g., \cite{Demlow06}:
\begin{align}
 \nabla_{\Gammalin} q_h(x) &= \bP_h(I-d\bH)\gradG q_h^\ell(p(x)),\quad  x \in \Gammalin, \label{trans1} \\
 \gradG q_h^\ell(p(x)) & = (I-d\bH)^{-1}(I- \frac{\bn \bn_{\rm lin}^T}{\bn_{\rm lin}^T \bn}) \gradGh q_h(x),  \quad x \in \Gammalin .\label{trans2}
\end{align}
The result \eqref{trans2} implies
\[
  \|\gradG q_h^\ell(p(\cdot))\|_{L^2(\Gammalin)} \lesssim \|\gradGh q_h\|_{L^2(\Gammalin)} \lesssim h^{-1} \|q_h\|_{1,h}.
\]
We treat the first term in \eqref{aux774} using perturbation arguments:
\[
\begin{split}
 & \int_{\Gammalin} \bv^e \cdot\nabla_{\Gammalin} q_h \, ds_h =
\int_{\Gammalin} \bv^e \cdot \bP_h(I-d\bH) \gradG q_h^\ell(p(\cdot)) \, ds_h \\
 & = \int_{\Gammalin} \bP\bP_h\bP \bv^e \cdot \gradG q_h^\ell(p(\cdot)) \, ds_h - \int_{\Gammalin} d \bH  \bP_h\bv^e \cdot \gradG q_h^\ell(p(\cdot)) \, ds_h \\
 & \geq \int_{\Gammalin} \bP\bP_h\bP \bv^e \cdot \gradG q_h^\ell(p(\cdot)) \, ds_h - c \|d\|_{L^\infty(\Gammalin)} \|\bv\|_{H^1(\Gamma)} h^{-1} \|q_h\|_{1,h} \\
& \geq \int_{\Gammalin} \bP\bP_h\bP \bv^e \cdot \gradG q_h^\ell(p(\cdot)) \, ds_h - c h \|\bv\|_{H^1(\Gamma)}  \|q_h\|_{1,h} \\
& = \int_{\Gammalin}  \bv^e \cdot \gradG q_h^\ell(p(\cdot)) \, ds_h +\int_{\Gammalin} (\bP\bP_h\bP -\bP)\bv^e \cdot \gradG q_h^\ell(p(\cdot)) \, ds_h - c h \|\bv\|_{H^1(\Gamma)}  \|q_h\|_{1,h} \\
 & \geq \int_{\Gammalin}  \bv^e \cdot \gradG q_h^\ell(p(\cdot)) \, ds_h - c h \|\bv\|_{H^1(\Gamma)}  \|q_h\|_{1,h} \\
 &= \int_\Gamma\bv \cdot\gradG q_h^\ell \, ds +\int_{\Gamma} (\mu_h^{-1}-1)\bv \cdot\gradG q_h^\ell \, ds- c h \|\bv\|_{H^1(\Gamma)}  \|q_h\|_{1,h} \\
 & \geq \int_\Gamma\bv \cdot\gradG q_h^\ell \, ds- c h \|\bv\|_{H^1(\Gamma)}  \|q_h\|_{1,h} = \|q_h^\ell\|^2_{L^2(\Gamma)}- c h \|\bv\|_{H^1(\Gamma)}  \|q_h\|_{1,h}\\
 & \gtrsim  (1-\tilde c h) \|q_h\|_{L^2(\Gammalin)}^2- c h \|\bv\|_{H^1(\Gamma)}  \|q_h\|_{1,h} \\ & \gtrsim  (1-\tilde c h) \|q_h\|_{L^2(\Gammalin)} \big(\|q_h\|_{L^2(\Gammalin)}- c h \|q_h\|_{1,h}\big).
\end{split}
\]
Take $h>0$ sufficiently small such that $1-\tilde c h >0$.
Dividing both sides of the above chain by $\|\bv_h\|_A$ and using \eqref{aux1AA} yields
\begin{equation}\label{aux552A}
 \| q_h\|_{L^2(\Gammalin)} - c  \|q_h\|_{1,\,h} \lesssim \sup_{\bv_h\in\bV_h^k}\frac{\int_{\Gammalin} \bv_h \cdot \nabla_{\Gammalin} q_h\, ds_h}{\|\bv_h\|_A}.
\end{equation}
From \eqref{LBB2A} and \eqref{aux552A} we have
\begin{align*}
  \|q_h\|_{L^2(\Gammalin)} & \lesssim \sup_{\bv_h\in\bV_h^k}\frac{\int_{\Gammalin} \bv_h \cdot \nabla_{\Gammalin} q_h\, ds_h}{\|\bv_h\|_A} + h^\frac12
  \left\|\bn_{\rm lin}\cdot\nabla  q_h\right\|_{L^2(\Omega_h^{\Gamma})},
\end{align*}
hence, \eqref{infsuplin} holds.
\end{proof}

\subsubsection{The inf-sup condition for $\Gamma^{\rm lin}$ holds for $k\geq 2$} \label{sectRes4}
In \cite{olshanskii2019inf} the alternative inf-sup condition~\eqref{LBB2A} was proved for $\mathbf{P}_2$--$P_1$ elements with the original smooth surface  $\Gamma$ instead of its approximation $\Gamma^{\rm lin}$. In this section we use arguments from that paper and analyze the inf-sup condition for $\Gamma^{\rm lin}$. We extend the analysis presented in \cite{olshanskii2019inf} in the sense that we show that the inf-sup condition~\eqref{LBB2A} (hence \eqref{infsuplin}) holds for \emph{all} $k\geq 2$.

For this analysis, as in \cite{olshanskii2019inf}, we derive a further condition that is equivalent to \eqref{LBB2A}, in which the norm on the left-hand side in \eqref{LBB2A} is replaced by a weaker one where $\sum_{T \in \cT_h^\Gamma}$ is replaced by $\sum_{T \in \cT^{\Gamma}_{\textnormal{reg}}}$ with
$\cT^{\Gamma}_{\textnormal{reg}} \subset  \cT_h^\Gamma$ a subset of ``regular elements.''
We define the set of \emph{regular elements} as  those $T \in \T_h^\Gamma$ for which the area of the intersection $\Gamma_T = \Gammalin \cap T$ is not less than
$\hat c_\cT h_T^2$ with some suitably chosen (cf.~\cite{olshanskii2019inf}) threshold  parameter $ \hat c_\cT > 0$:
\begin{equation} \label{defregGglobal}
  \cT^{\Gamma}_{\textnormal{reg}} \coloneqq \{\, T \in \cT_h^\Gamma\::\:|\Gamma_T| \geq \hat c_\cT h_T^2\,\}.
\end{equation}
We define a corresponding \emph{semi}norm on $V_h^{k-1}$:
\[
\|q\|_{1,\textnormal{reg}}\coloneqq \Big(\sum_{T \in \T_{\textnormal{reg}}^\Gamma} h_T \|\nabla q\|_{L^2(T)}^2\Big)^{\frac12}.
\]
The result in the following lemma is derived in \cite[Corollary 4.3]{olshanskii2019inf} for the case of the exact surface $\Gamma$. With very small modifications all arguments also apply if $\Gamma$ is replaced by $\Gammalin$.
\begin{lemma} \label{lemPO}
For $h>0$ sufficiently small the following holds:
\[
\|q_h\|_{1,h}^2 \lesssim \|q_h\|_{1,\textnormal{reg}}^2+ h \left\|\bn_{\rm lin}\cdot\nabla  q_h\right\|_{L^2(\Omega_h^{\Gamma})}^2  \quad \text{for all}~ q_h \in V^{k-1}_h.
\]
\end{lemma}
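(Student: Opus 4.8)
The plan is to compare the two seminorms element by element. Split $\|q_h\|_{1,h}^2=\sum_{T\in\T_h^\Gamma}h_T\|\nabla q_h\|_{L^2(T)}^2$ into the contribution of the regular elements $\cT^\Gamma_{\textnormal{reg}}$, which by definition equals $\|q_h\|_{1,\textnormal{reg}}^2$, and the contribution of the remaining, small-cut elements. Thus it remains to bound $\sum_{T\in\T_h^\Gamma\setminus\cT^\Gamma_{\textnormal{reg}}}h_T\|\nabla q_h\|_{L^2(T)}^2$ by the right-hand side. On each such $T$ I would decompose $\nabla q_h=\bP_{\rm lin}(T)\nabla q_h+(\bn_{\rm lin}(T)\cdot\nabla q_h)\,\bn_{\rm lin}(T)$ into its components tangential and normal to the planar piece $\Gammalin\cap T$. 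Since $h_T\simeq h$ and the elements of $\T_h^\Gamma$ have finite overlap, the normal parts are controlled directly, $\sum_{T}h_T\|(\bn_{\rm lin}\cdot\nabla q_h)\bn_{\rm lin}\|_{L^2(T)}^2\lesssim h\|\bn_{\rm lin}\cdot\nabla q_h\|_{L^2(\OGamma)}^2$, which is the second term on the right-hand side. It therefore suffices to estimate the tangential contributions $\sum_{T}h_T\|\bP_{\rm lin}\nabla q_h\|_{L^2(T)}^2$ over the small-cut elements.

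For the tangential part I would follow the argument of \cite[Corollary~4.3]{olshanskii2019inf}. Its core is a geometric covering property of the cut layer: because $\Gamma$ is smooth, $\Gammalin$ is piecewise planar with ${\rm dist}(\Gammalin,\Gamma)\lesssim h^2$ and small co-normal jumps, and $\{\T_h\}_{h>0}$ is shape regular, there is an $L\in\mathbb N$ independent of $h$ such that, for $h$ small enough, every small-cut element $T$ is connected to some regular element $T_{\rm reg}\in\cT^\Gamma_{\textnormal{reg}}$ by a chain $T=T_0,T_1,\dots,T_m=T_{\rm reg}$, $m\le L$, of cut elements with $T_i\cap T_{i+1}$ a common face. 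Along such a chain the tangential part of $\nabla q_h$ is transported from element to element by exploiting the $C^0$-continuity of $q_h$ across the shared faces and polynomial norm equivalences on the reference-scaled macro-patch, the discrepancy at each step being a normal-derivative term; on the final, regular element a reverse trace inequality — valid precisely because $|\Gamma_{T_{\rm reg}}|\ge\hat c_\cT h_{T_{\rm reg}}^2$ for a suitably chosen threshold $\hat c_\cT$ — closes the estimate. Summing over all small-cut $T$ and using that each element belongs to boundedly many chains yields $\sum_{T}h_T\|\bP_{\rm lin}\nabla q_h\|_{L^2(T)}^2\lesssim\|q_h\|_{1,\textnormal{reg}}^2+h\|\bn_{\rm lin}\cdot\nabla q_h\|_{L^2(\OGamma)}^2$, which together with the first paragraph proves the lemma. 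I expect this geometric covering/chain step, together with the accompanying transport estimate along a chain, to be the \textbf{main obstacle}: it is the only genuinely non-routine ingredient, everything else being inverse inequalities, the conformity of $q_h$, and absorption of lower-order perturbations.

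Two minor points adapt \cite{olshanskii2019inf}, where $\Gamma$ plays the role of $\Gammalin$ and the exact normal $\bn$ the role of $\bn_{\rm lin}$. First, replacing $\Gamma$ by $\Gammalin$ changes only geometric constants, since $\Gammalin$ is $O(h^2)$-close to $\Gamma$; any residual $\bn$-versus-$\bn_{\rm lin}$ mismatch produces, via $\|\bn-\bn_{\rm lin}\|_{L^\infty(\OGamma)}\lesssim h$ and a finite element inverse inequality, a term $\lesssim h^2\|q_h\|_{1,h}^2$ that is absorbed into the left-hand side for $h$ small. Second, the pressure degree $k-1$ enters the argument only through the dimensions of the polynomial spaces on the reference patch, so all constants remain $h$-uniform and the statement holds for every $k\ge2$.
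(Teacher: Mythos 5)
Your proposal matches the paper's own treatment of this lemma: the paper gives no independent proof but simply invokes \cite[Corollary~4.3]{olshanskii2019inf} (proved there for the exact surface $\Gamma$) and remarks that the arguments apply to $\Gammalin$ with very small modifications --- precisely the two adaptations you list, namely the $O(h^2)$-closeness of $\Gammalin$ to $\Gamma$ with the $\bn$ versus $\bn_{\rm lin}$ mismatch absorbed for small $h$, and the polynomial degree $k-1$ entering only through reference-element constants. Your regular/irregular splitting, the normal--tangential decomposition, and the covering/chain reconstruction are a plausible account of the cited corollary's proof, but since the paper does not reproduce that argument, the deferral to the reference is the essential (and shared) step.
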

From this result and Lemma~\ref{lemmaequiv} we immediately obtain the following corollary.
\begin{corollary} \label{CorolLBB2AA}
The  inf-sup condition for $\Gamma^{\rm lin}$ \eqref{infsuplin} is equivalent to the following one:
 \begin{align}
  \|q_h\|_{1,\textnormal{reg}} & \lesssim \sup_{\bv_h \in \bV_h^k} \frac{\int_{\Gammalin} \bv_h \cdot \nabla_{\Gamma^{\rm lin}} q_h \, ds_h}{\|\bv_h \|_{A}} + h^\frac12 \left\|\bn_{\rm lin}\cdot\nabla  q_h\right\|_{L^2(\Omega_h^{\Gamma})} ~\forall~ q_h \in V^{k-1}_h.\label{LBB2AA}
\end{align}
\end{corollary}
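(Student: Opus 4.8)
The plan is to obtain the corollary as an immediate consequence of Lemma~\ref{lemmaequiv} together with Lemma~\ref{lemPO}. By Lemma~\ref{lemmaequiv} the inf-sup condition \eqref{infsuplin} for $\Gamma^{\rm lin}$ is equivalent to \eqref{LBB2A}, so it suffices to prove that \eqref{LBB2A} and \eqref{LBB2AA} are equivalent (for $h>0$ sufficiently small). Both of these conditions carry the same supremum term and the same normal-derivative term on their right-hand sides; they differ only in the norm appearing on the left, namely $\|q_h\|_{1,h}$ versus the weaker seminorm $\|q_h\|_{1,\textnormal{reg}}$ obtained by restricting the sum to regular elements.

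First I would treat the direction \eqref{LBB2A} $\Rightarrow$ \eqref{LBB2AA}. Since $\cT^{\Gamma}_{\textnormal{reg}}\subset\cT_h^\Gamma$ by \eqref{defregGglobal}, summing over fewer elements gives $\|q_h\|_{1,\textnormal{reg}}\le\|q_h\|_{1,h}$ for every $q_h\in V_h^{k-1}$. Hence the left-hand side of \eqref{LBB2AA} is bounded by that of \eqref{LBB2A}, while the right-hand sides coincide, and the implication follows at once.

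For the converse \eqref{LBB2AA} $\Rightarrow$ \eqref{LBB2A}, I would invoke Lemma~\ref{lemPO}, which gives, after taking square roots and using $\sqrt{a+b}\le\sqrt a+\sqrt b$ for $a,b\ge0$,
\[
\|q_h\|_{1,h}\lesssim \|q_h\|_{1,\textnormal{reg}}+h^{\frac12}\left\|\bn_{\rm lin}\cdot\nabla q_h\right\|_{L^2(\Omega_h^{\Gamma})} \qquad \text{for all}~q_h\in V_h^{k-1}.
\]
Bounding $\|q_h\|_{1,\textnormal{reg}}$ by the right-hand side of \eqref{LBB2AA} and observing that the remaining summand $h^{\frac12}\|\bn_{\rm lin}\cdot\nabla q_h\|_{L^2(\Omega_h^{\Gamma})}$ is already one of the two terms on the right-hand side of \eqref{LBB2A}, we arrive at \eqref{LBB2A}. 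Chaining the two equivalences then yields \eqref{infsuplin} $\Leftrightarrow$ \eqref{LBB2AA}.

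There is essentially no obstacle in the corollary itself: the genuine work sits in Lemma~\ref{lemPO} (the regular-element decomposition, carried over with minor modifications from \cite[Corollary~4.3]{olshanskii2019inf}) and in the Verf\"urth-type argument of Lemma~\ref{lemmaequiv}. The only point requiring a line of care is that all three results are stated with constants independent of $h$ and of the position of $\Gamma_h$ in the background mesh, so this uniformity is inherited by the equivalence asserted here; beyond that, the proof is a short bookkeeping exercise.
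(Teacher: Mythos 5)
Your proposal is correct and follows exactly the route the paper intends: the paper states the corollary as an immediate consequence of Lemma~\ref{lemPO} and Lemma~\ref{lemmaequiv}, and your write-up simply makes explicit the trivial inclusion $\|q_h\|_{1,\textnormal{reg}}\le\|q_h\|_{1,h}$ for one direction and the square-root form of Lemma~\ref{lemPO} for the other.
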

We finally state the main stability  result.
\begin{theorem}
 Take $k\geq 2$. For $h>0$ sufficiently small the  inf-sup condition for $\Gamma^{\rm lin}$ \eqref{infsuplin} holds.
\end{theorem}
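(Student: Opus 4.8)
The plan is to establish the equivalent condition \eqref{LBB2AA}; by Corollary~\ref{CorolLBB2AA} this is exactly the inf-sup condition \eqref{infsuplin} asserted by the theorem. The heart of the matter is a Fortin-type construction: for a given $q_h\in V_h^{k-1}$ one must produce a test field $\bv_h\in\bV_h^k$ such that $\int_{\Gammalin}\bv_h\cdot\nabla_{\Gammalin}q_h\,ds_h$, together with the normal-derivative term $h\|\bn_{\rm lin}\cdot\nabla q_h\|^2_{L^2(\OGamma)}$, controls $\|q_h\|^2_{1,\textnormal{reg}}$, while $\|\bv_h\|_A\lesssim\|q_h\|_{1,\textnormal{reg}}$. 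I would build $\bv_h$ as in \cite{olshanskii2019inf}: on each regular element $T\in\cT^{\Gamma}_{\textnormal{reg}}$, take $\bv_h|_T$ proportional (with an appropriate $h_T$-power) to $b_T\,\Plin\nabla q_h|_T$, where $b_T$ is a fixed bubble function of polynomial degree $2$ assembled from barycentric coordinates, and $\bv_h|_T:=0$ elsewhere. The decisive algebraic point is the degree identity $2+(k-2)=k$, so that $\bv_h\in\bV_h^k$ for every $k\ge2$ --- the same recipe that works for $k=2$ in \cite{olshanskii2019inf}, now with a polynomial factor $\Plin\nabla q_h|_T$ in place of a constant. Because $\Plin\nabla q_h$ is tangential to $\Gammalin$, the local normal component $\bn_{\rm lin}\cdot\bv_h$ vanishes on $\Gamma_T$, so the large penalty contribution to $\|\cdot\|_A$ stays inactive; the normal content of $\nabla q_h$ is instead absorbed into the term $h^{1/2}\|\bn_{\rm lin}\cdot\nabla q_h\|_{L^2(\OGamma)}$ using \eqref{L2control} and inverse inequalities.

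The two estimates on $\bv_h$ I would derive by invoking the quantitative local properties of \emph{regular} elements established in \cite{olshanskii2019inf}: the bubble $b_T$ is bounded below on a piece of $\Gamma_T$ of area $\gtrsim h_T^2$, and on a regular cut element the relevant surface- and volume-$L^2$ norms of polynomials of bounded degree are comparable with constants independent of $h$ and of how $\Gammalin$ meets the mesh. Together with the trace inequality \eqref{eqtraceestimate}, the bound \eqref{L2control}, and standard finite element inverse estimates, these give the lower bound for $\int_{\Gammalin}\bv_h\cdot\nabla_{\Gammalin}q_h\,ds_h$ and the upper bound for $\|\bv_h\|_A$; dividing and absorbing the normal remainder yields \eqref{LBB2AA}, hence \eqref{infsuplin}.

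I expect the main obstacle in the two places where the higher-order case genuinely departs from $k=2$. First, no polynomial of degree $\le k$ vanishes on the entire boundary of a tetrahedron, so the local pieces $b_T\,\Plin\nabla q_h|_T$ cannot be supported in a single element; as in \cite{olshanskii2019inf}, their edge degrees of freedom (and, for $k\ge3$, face degrees of freedom) must be organized so that $\bv_h$ is a genuine $C^0$ element of $\bV_h^k$ and the contributions from its nonzero traces on interelement faces are dominated --- carrying the construction of \cite{olshanskii2019inf} through this richer degree-of-freedom structure is the delicate point. Second, the local polynomial estimates that \cite{olshanskii2019inf} established for constant pressure gradients must be re-derived for polynomials of degree up to $k-2$, with constants allowed to depend on $k$ but required to be independent of $h$ and of the cut position; this is a scaling/compactness argument on finite-dimensional spaces. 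Finally, replacing the smooth surface $\Gamma$ used in \cite{olshanskii2019inf} by the piecewise-planar $\Gammalin$ is harmless: $\Gammalin$ is flat on each cut element, so $\bn_{\rm lin}$ and $\Plin$ are piecewise constant, and $\Gammalin$ is $O(h^2)$-close to $\Gamma$, so all borrowed estimates are perturbed only by lower-order terms that are absorbed for $h$ sufficiently small.
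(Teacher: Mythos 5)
Your reduction to condition \eqref{LBB2AA} via Corollary~\ref{CorolLBB2AA} and your overall Verf\"urth/Fortin strategy match the paper, but the concrete test-function construction you describe has a genuine gap, and it sits exactly where the new idea of the proof is needed. A ``bubble'' $b_T$ of polynomial degree $2$ cannot vanish on the whole boundary of a tetrahedron, so $b_T\,\Plin\nabla q_h|_T$ extended by zero is not continuous (note also that both $\Plin$ and $\nabla q_h$ jump across faces); and if you repair this with a true element bubble $\lambda_1\lambda_2\lambda_3\lambda_4$ you land at degree $4+(k-2)=k+2$, outside $\bV_h^k$. You flag this as ``the delicate point'' to be handled by ``organizing edge degrees of freedom,'' but that organization is not a bookkeeping step --- it is a different test function. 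The paper takes
\[
\bv(\vect x) = \sum_{E\in\mathcal{E}_{\textnormal{reg}}} h_E^2\,\phi_E(\vect x)\,[\mathbf{t}_E\cdot\nabla q(\vect x)]\,\mathbf{t}_E,
\]
with $\phi_E$ the \emph{quadratic edge-midpoint} basis function and $\mathbf{t}_E$ the fixed edge direction. Continuity holds because the directional derivative $\mathbf{t}_E\cdot\nabla q$ of the continuous $q$ is continuous across every face containing $E$ (and $\phi_E$ vanishes on the faces not containing $E$), and the degree count $2+(k-2)=k$ is then legitimate. Your construction, by contrast, is not a well-defined element of $\bV_h^k$, so the subsequent estimates cannot be carried out as stated.

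Two further consequences of switching to the edge-based field, which your sketch does not account for: first, $\bv$ is \emph{not} tangential to $\Gammalin$ (the $\mathbf{t}_E$ are edge directions, not tangent vectors of the surface), so the penalty contribution $\eta\|\bn_{\rm lin}\cdot\bv\|^2_{L^2(\Gammalin)}$ does not vanish as you claim; it is instead controlled by $\|q\|_{1,h}^2$ through the explicit $h_E^2$ scaling together with $\eta\simeq h^{-2}$. Second, the lower bound no longer falls out of ``$\bv$ points along $\Plin\nabla q_h$''; one must first split $\mathbf{t}_E\cdot\nabla q$ into $\Plin\mathbf{t}_E\cdot\nabla q$ and a normal remainder absorbed into $h^{1/2}\|\bn_{\rm lin}\cdot\nabla q\|_{L^2(\OGamma)}$, and then use the uniform spanning property of the three edge directions of the base face $F_T$, namely $\sum_{E\subset F_T}|\Plin\mathbf{t}_E\cdot\bz|^2\gtrsim|\Plin\bz|^2$, together with the polynomial norm equivalences \eqref{aux1234} on regular cut elements. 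Your remarks on the regular-element geometry, the $k$-dependent compactness argument for polynomial norm equivalences, and the harmlessness of replacing $\Gamma$ by $\Gammalin$ are all in the right spirit, but without the edge-based construction and the spanning argument the proof does not close.
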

\begin{proof}
We show that condition \eqref{LBB2AA} is satisfied. Denote by  $\mathcal{E}_{\textnormal{reg}}$ the set of all edges of tetrahedra from  $\T^\Gamma_{\textnormal{reg}}$.
Let $\widetilde{\mathbf{t}}_E$ be a vector connecting the two endpoints of $E \in \mathcal{E}_{\textnormal{reg}}$ and $\mathbf{t}_E \coloneqq \widetilde{\mathbf{t}}_E/|\widetilde{\mathbf{t}}_E|$.  For each edge $E$ let  $\phi_E$ be the  quadratic nodal finite element  function corresponding to the midpoint of $E$.
For  $q\in V_h^{k-1}$, we define
\begin{equation}\label{VforQregA}
	\bv(\vect x) \coloneqq \sum_{E\in\mathcal{E}_{\textnormal{reg}}} h_E^2\phi_E(\vect x)\, [\mathbf{t}_E\cdot\nabla q(\vect x)]\mathbf{t}_E.
\end{equation}
This vector function is continuous on $\Omega$ and its components are piecewise polynomials of degree $k$, hence $\bv\in \bV_h^k$ holds.
Using $0 \leq \phi_E \leq 1$ in $T \in \T_h^{\Gamma}$, we obtain with $\Gamma_T= \Gammalin \cap T$,
\begin{align}
	& (\bv,\nabla_{\Gammalin} q)_{L^2(\Gamma_T)}=(\bv,\Plin\nabla q)_{L^2(\Gamma_T)} \nonumber\\
	&= \int_{\Gamma_T}\sum_{E\in\mathcal{E}_{\textnormal{reg}}}h_E^2\phi_E\, |\Plin\mathbf{t}_E\cdot\nabla q|^2 \diff{s}+\int_{\Gamma_T}\sum_{E\in\mathcal{E}_{\textnormal{reg}}}h_E^2\phi_E\, (\Plin^\perp\mathbf{t}_E\cdot\nabla q) (\Plin\mathbf{t}_E\cdot\nabla q) \diff{s} \nonumber \\
	&\ge\frac12\int_{\Gamma_T} \sum_{E\in\mathcal{E}_{\textnormal{reg}}}h_E^2\phi_E\, |\Plin\mathbf{t}_E\cdot\nabla q|^2 \diff{s} -\frac12\int_{\Gamma_T}\sum_{E\in\mathcal{E}_{\textnormal{reg}}}h_E^2\phi_E\, |\Plin^\perp\mathbf{t}_E\cdot\nabla q|^2 \diff{s}\nonumber \\
	&\ge\frac12\int_{\Gamma_T} \sum_{E\in\mathcal{E}_{\textnormal{reg}}}h_E^2\phi_E\, |\Plin\mathbf{t}_E\cdot\nabla q|^2\diff{s} -\frac12\int_{\Gamma_T}\sum_{E\in\mathcal{E}(T)}h_E^2 |\bn_{\rm lin}\cdot\nabla q|^2\diff{s} \nonumber \\
	&\ge\frac12\int_{\Gamma_T} \sum_{E\in\mathcal{E}_{\textnormal{reg}}}h_E^2\phi_E\, |\Plin\mathbf{t}_E\cdot\nabla q|^2\diff{s} -3h_T^2 \|\bn_{\rm lin}\cdot\nabla q\|^2_{L^2(\Gamma_T)} \nonumber \\
	& \geq \frac12\int_{\Gamma_T} \sum_{E\in\mathcal{E}_{\textnormal{reg}}}h_E^2\phi_E\, |\Plin\mathbf{t}_E\cdot\nabla q|^2\diff{s}-c_1h_T\|\bn_{\rm lin}\cdot\nabla q\|_{L^2(T)}^2. \label{est1}
\end{align}
For the last inequality  we used the local trace inequality \eqref{eqtraceestimate} and a standard inverse estimate applied to the  piecewise  polynomial $\bn_{\rm lin}\cdot\nabla q$.
Hence, for every $T \in \T_h^{\Gamma}$ we have
\begin{equation} \label{est2}
	(\bv,\nabla_{\Gammalin} q)_{L^2(\Gamma_T)}+ c_1 h_T\|\bn_{\rm lin}\cdot\nabla q\|_{L^2(T)}^2 \geq 0.
\end{equation}
We now restrict to $T \in \cT_{\textnormal{reg}}^\Gamma$ and estimate the first term in \eqref{est1}.  Corresponding to $\Gamma_T=\Gammalin \cap T$ we define a so-called base face  $F_T$ of $T$ as that  face of $T$ with unit normal closest to the unit normal $\bn_{\rm lin}$ on $\Gamma_T$.
Using shape regularity of $\cT_h$, a transformation to the unit tetrahedron, equivalence of norms and \eqref{defregGglobal} it follows (cf.  \cite{olshanskii2019inf} for precise derivation) that there exists a surface segment  $\tilde \Gamma_T$ with  the following properties:
\begin{equation} \label{e3_}
 \widetilde\Gamma_T\subset\Gamma_T,\quad
|\widetilde\Gamma_T|\gtrsim h^2,\quad \phi_E\ge C>0\quad\text{on}~\widetilde\Gamma_T~~\text{for all}~E \subset F_T.
\end{equation}
 where the constant $C >0$ is independent of $h$ and of how $\Gamma_T$ intersects $T$. Note that for a polynomial $p$ of a fixed degree, we have
\begin{equation}\label{aux1234}
 \|p\|_{L^2(\Gamma_T)}\lesssim\|p\|_{L^2(\tilde \Gamma_T)} ,\quad \text{and}\quad
\|\nabla p\|_{L^2(T)}^2\lesssim \|\bn_{\rm lin}\cdot\nabla p\|_{L^2(T)}^2+h \|\nabla_\Gamma p\|_{L^2(\Gamma_T)}^2.
\end{equation}
To show the first estimate one may use standard arguments by inscribing a 2-ball of radius $\simeq h$ in $\widetilde\Gamma_T$, superscribing a 2-ball of radius $\simeq h$ around $\Gamma_T$, applying a mapping to a reference superscribed unit 2-ball and using  equivalence of norms in this reference domain. By a similar argument one shows the second inequality. Concerning the latter we note that with the unit 3-ball denoted by $\hat B_3$ and the planar segment $\hat P:=\hat B_3 \cap \{x_3=0\}$ the functional $p \to \|\frac{\partial p}{\partial x_3}\|_{L^2(\hat B_3)} + \|\frac{\partial p}{\partial x_1} +\frac{\partial p}{\partial x_2}\|_{L^2(\hat P)}$ defines a norm on the space of non-constant polynomials of a fixed degree.

Using the first estimate from \eqref{aux1234} and \eqref{e3_} we estimate the first term in \eqref{est1} as follows:
\[
\begin{split}
& \int_{\Gamma_T} \sum_{E\in\mathcal{E}_{\textnormal{reg}}}h_E^2\phi_E\, |\bP_{\rm lin}\mathbf{t}_E\cdot\nabla q|^2\diff{s}
  \gtrsim h_T^2 \sum_{E \subset F_T} \int_{\Gamma_T} \phi_E |\Plin\mathbf{t}_E\cdot\nabla q|^2\diff{s} \\
    & \gtrsim h_T^2 \sum_{E \subset F_T} \int_{\widetilde\Gamma_T} \phi_E |\Plin\mathbf{t}_E\cdot\nabla q|^2\diff{s}
     \gtrsim h_T^2 \sum_{E \subset F_T} \int_{\widetilde\Gamma_T} |\Plin\mathbf{t}_E\cdot\nabla q|^2\diff{s}  \\
    & \gtrsim h_T^2 \sum_{E \subset F_T} \int_{\Gamma_T} |\Plin\mathbf{t}_E\cdot\nabla q|^2\diff{s} .
\end{split}
\]
Due to the construction of the base face $F_T$   we have  that  $|\bn_{\rm lin} \cdot \bn_{F_T}| $ is uniformly bounded away from zero. This implies that for any $\bz \in \Bbb{R}^3$ we have $\sum_{E \subset F_T}|\Plin \mathbf{t}_E\cdot\bz |^2 =\sum_{E \subset F_T}| \mathbf{t}_E\cdot\Plin\bz|^2 \gtrsim |\Plin \bz|^2$. Using this
and the second inequality in \eqref{aux1234} 
we get
\[
 \begin{split}
   \int_{\Gamma_T} \sum_{E\in\mathcal{E}_{\textnormal{reg}}}h_E^2\phi_E  |\Plin\mathbf{t}_E\cdot\nabla q|^2\diff{s} &
  \gtrsim  h_T^2 \int_{\Gamma_T} |\Plin\nabla q|^2\diff{s} = h_T^2 \int_{\Gamma_T} |\nabla_{\Gammalin} q|^2\diff{s} \\ &\hskip-2ex \gtrsim  h_T \|\nabla q\|_{L^2(T)}^2- h_T \|\bn_{\rm lin}\cdot \nabla q\|_{L^2(T)}^2\\
 &\hskip-2ex \gtrsim h_T \|\nabla q\|_{L^2(T)}^2- h_T \|\bn\cdot \nabla q\|_{L^2(T)}^2 -c h_T^2 \|\nabla q\|_{L^2(T)}^2.
 \end{split}
\]
Substituting this in \eqref{est1} we obtain for $T \in \cT_{\textnormal{reg}}^\Gamma$:
\begin{equation} \label{E6}
  (\bv,\nabla_{\Gammalin} q)_{L^2(\Gamma_T)}+ c\, h_T\|\bn_{\rm lin}\cdot\nabla q\|_{L^2(T)}^2
\gtrsim   h_T \|\nabla q\|_{L^2(T)}^2.
\end{equation}
Combining this with \eqref{est2} and summing over $T \in \cT_h^\Gamma$ yields
\begin{equation} \label{E7}
 \int_{\Gammalin}\bv \cdot \nabla_{\Gammalin} q \, ds_h+ c\, h \|\bn_{\rm lin}\cdot\nabla q\|_{L^2(\Omega_h^\Gamma)}^2 \gtrsim \|q\|_{1,\textnormal{reg}}^2
\end{equation}
We use the following elementary observation: For positive numbers $\alpha,\beta,\delta$ the inequality $\alpha + \beta^2 \geq c_0 \delta^2$ implies $\alpha + \beta(\beta +\delta) \geq \min\{c_0,1\} \delta(\beta +\delta)$ and thus $\frac{\alpha}{\beta +\delta} + \beta \geq \min\{c_0,1\} \delta$. Therefore, estimate \eqref{E7} implies
\begin{equation}\label{E8}
\frac{\int_{\Gammalin}\bv \cdot \nabla_{\Gammalin} q \, ds_h}{ \|q\|_{1,\textnormal{reg}}+h^\frac12 \|\bn_{\rm lin}\cdot\nabla q\|_{L^2(\Omega_h^\Gamma)}} + h^\frac12\|\bn_{\rm lin}\cdot\nabla q\|_{L^2(\Omega_h^\Gamma)}  \gtrsim \|q\|_{1,\textnormal{reg}}.
\end{equation}
It remains to estimate $\|\bv\|_A$. Straightforward estimates (cf. details in \cite{olshanskii2019inf}) yield
\begin{align*}
 \|\nabla_{\Gammalin}\bv\|^2_{L^2(\Gammalin)} +\|\bv\|^2_{L^2(\Gammalin)} & \lesssim  \|q\|_{1,h}^2, \\
 \eta \|\bn_{\rm lin} \cdot \bv\|_{L^2(\Gammalin)}^2 & \simeq h^{-2} \|\bn_{\rm lin} \cdot \bv\|_{L^2(\Gammalin)}^2 \lesssim \|q\|_{1,h}^2, \\
 \rho_u \|\nabla \bv \, \bn_{\rm lin}\|_{L^2(\Omega_h^\Gamma)}^2 & \simeq h^{-1}\|\nabla \bv\,  \bn_{\rm lin}\|_{L^2(\Omega_h^\Gamma)}^2 \lesssim \|q\|_{1,h}^2.
\end{align*}
This yields $\|\bv\|_A \lesssim \|q\|_{1,h}$, and using Lemma~\ref{lemPO} we  get
\[
 \|\bv\|_A \lesssim   \|q\|_{1,\textnormal{reg}}+h^\frac12 \left\|\bn_{\rm lin}\cdot\nabla  q \right\|_{L^2(\Omega_h^{\Gamma})}.
\]
Combining this with \eqref{E8} completes the proof.
\end{proof}

\section{Error analysis} \label{sectAnalysis}


As usual, the discretization error analysis is based on a Strang type Lemma which bounds the discretization error in terms of an approximation error and a consistency error.
We define the bilinear form
\begin{equation} \label{defcA}
\mathcal{A}_h((\bu, p),(\bv, q)) := A_h(\bu,\bv) + b_h(\bv, p) + b_h(\bu, q) - \tilde{s}_h(p,q),
\end{equation}
for $(\bu, p),(\bv, q) \in \bV_{reg,h} \times V_{reg,h}$. Stability of the discrete problem \eqref{discreteform1}, uniformly in $h$ and the position of $\Gamma$ in the triangulation, in the product norm $\|\cdot\|_A \times \|\cdot\|_M$  follows from the  inf-sup property \eqref{infsup}.  Hence, for $\mathcal{A}_h( \cdot,\cdot)$ it holds,
\begin{equation} \label{eqinfsupAh}
\sup_{(\bv_h,q_h)  \in \bU_h \times Q_h} \frac{\mathcal{A}_h((\bu_h,p_h), (\bv_h,q_h)) }{\left(\Vert \bv_h \Vert_{A}^2 + \Vert q_h \Vert_{M}^2\right)^\frac{1}{2}} \gtrsim \left(\Vert \bu_h \Vert_{A}^2 + \Vert p_h \Vert_{M}^2\right)^\frac{1}{2},
\end{equation}
for all $(\bu_h,p_h) \in \bU_h \times Q_h$. This and the continuity of the $\mathcal{A}_h$ form yield the following Strang's-type Lemma. Here and in the remainder we use that  the solution $(\bu, p)  \in \bV_T \times L^2_0(\Gamma)$ of \eqref{contform} is sufficiently regular, in particular $(\bu, p)\in \bV_{reg,h} \times V_{reg,h}$.

\begin{lemma}[Strang's Lemma] \label{stranglemma}
Let $(\bu, p)  \in \bV_T \times L^2_0(\Gamma)$ be the unique solution of problem \eqref{contform} and $(\bu_h, p_h) \in \bU_h \times Q_h$ the unique solution of the finite element problem \eqref{discreteform1} . The following discretization error bound holds:
\begin{multline}\label{eqStrang}
\Vert \bu^e - \bu_{h} \Vert_{A} + \Vert p^e - p_{h} \Vert_{M}
\lesssim \min_{(\bv_h,q_h)  \in \bU_h \times Q_h} \left( \Vert \bu^e - \bv_h \Vert_{A} + \Vert p^e - q_{h} \Vert_{M}\right) \\
+ \sup_{(\bv_h,q_h)  \in \bU_h \times Q_h} \frac{\vert \mathcal{A}_h((\bu^e,p^e), (\bv_h,q_h)) - (\bbf_h, \bv_h)_{L^2(\Gamma_h)} + (g_h, q_h)_{L^2(\Gamma_h)} \vert}{\left(\Vert \bv_h \Vert_{A}^2 + \Vert q_h \Vert_{M}^2\right)^\frac{1}{2}}.
\end{multline}
\end{lemma}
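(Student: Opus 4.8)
The plan is to run the standard Strang argument built on three ingredients already in hand: the discrete inf-sup stability \eqref{eqinfsupAh} of $\mathcal{A}_h$ on $\bU_h\times Q_h$, the continuity of $\mathcal{A}_h$ (an immediate consequence of the Cauchy--Schwarz estimates \eqref{EstCont} in Lemma~\ref{Ahscalarproduct} for $A_h$, $b_h$ and $\tilde s_h$), and the Galerkin identity coming from \eqref{discreteform1}. First, fix an arbitrary pair $(\bv_h,q_h)\in\bU_h\times Q_h$ and split the error as
\[
(\bu^e-\bu_h,\;p^e-p_h)=(\bu^e-\bv_h,\;p^e-q_h)+(\bw_h,\;r_h),\qquad (\bw_h,r_h):=(\bv_h-\bu_h,\;q_h-p_h)\in\bU_h\times Q_h.
\]
By the triangle inequality it suffices to bound $\Vert\bw_h\Vert_A+\Vert r_h\Vert_M$ by the right-hand side of \eqref{eqStrang}, and since this sum is equivalent (up to the factor $\sqrt2$) to $(\Vert\bw_h\Vert_A^2+\Vert r_h\Vert_M^2)^{1/2}$, I would estimate the latter.

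Applying \eqref{eqinfsupAh} to $(\bw_h,r_h)$ yields a test pair $(\bz_h,\zeta_h)\in\bU_h\times Q_h$ with $(\Vert\bz_h\Vert_A^2+\Vert\zeta_h\Vert_M^2)^{1/2}=1$ and $\mathcal{A}_h((\bw_h,r_h),(\bz_h,\zeta_h))\gtrsim(\Vert\bw_h\Vert_A^2+\Vert r_h\Vert_M^2)^{1/2}$. The key step is to rewrite the left-hand side: since $(\bu_h,p_h)$ solves \eqref{discreteform1}, the definition \eqref{defcA} gives $\mathcal{A}_h((\bu_h,p_h),(\bz_h,\zeta_h))=(\bbf_h,\bz_h)_{L^2(\Gamma_h)}-(g_h,\zeta_h)_{L^2(\Gamma_h)}$, so by linearity and by adding and subtracting $(\bu^e,p^e)$,
\[
\mathcal{A}_h((\bw_h,r_h),(\bz_h,\zeta_h))=\mathcal{A}_h((\bv_h-\bu^e,\,q_h-p^e),(\bz_h,\zeta_h))+R_h,
\]
where $R_h:=\mathcal{A}_h((\bu^e,p^e),(\bz_h,\zeta_h))-(\bbf_h,\bz_h)_{L^2(\Gamma_h)}+(g_h,\zeta_h)_{L^2(\Gamma_h)}$ is precisely the quantity whose modulus appears in the consistency supremum of \eqref{eqStrang} (with $(\bz_h,\zeta_h)$ in place of $(\bv_h,q_h)$). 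The first term on the right is controlled by the continuity of $\mathcal{A}_h$ together with $(\Vert\bz_h\Vert_A^2+\Vert\zeta_h\Vert_M^2)^{1/2}=1$, giving $\lesssim\Vert\bu^e-\bv_h\Vert_A+\Vert p^e-q_h\Vert_M$; and $|R_h|$ is bounded by the consistency supremum since $(\bz_h,\zeta_h)$ ranges over $\bU_h\times Q_h$ with unit norm. Combining and then taking the infimum over $(\bv_h,q_h)\in\bU_h\times Q_h$ proves \eqref{eqStrang}.

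There is no genuine obstacle here; the proof is a routine assembly of ingredients already established. Two small points need a word of care. First, $\mathcal{A}_h((\bu^e,p^e),\cdot)$ must be well defined and amenable to \eqref{EstCont}, which requires $(\bu^e,p^e)\in\bV_{reg,h}\times V_{reg,h}$; this is exactly the regularity assumption on the continuous solution recorded just before the lemma. Second, it is essential that the test pair produced by \eqref{eqinfsupAh} lies in the \emph{discrete} space $\bU_h\times Q_h$ (rather than in the larger $\bV_{reg,h}\times V_{reg,h}$), so that $|R_h|$ is dominated by the supremum over $\bU_h\times Q_h$ that appears in \eqref{eqStrang}; this is built into the statement of \eqref{eqinfsupAh}.
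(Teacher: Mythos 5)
Your proof is correct and follows exactly the route the paper intends: the paper itself gives no detailed proof, merely invoking the discrete stability \eqref{eqinfsupAh} and the continuity of $\mathcal{A}_h$, and your argument is the standard assembly of these ingredients with the Galerkin identity from \eqref{discreteform1}. The two points of care you flag (regularity of $(\bu^e,p^e)$ and discreteness of the test pair) are exactly the right ones, so nothing is missing.
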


The following lemma deals with the approximation error bounds in the norms that occur in the Strang lemma above. A proof can be found in \cite[Lemma 5.10]{jankuhn2019higher}.
\begin{lemma}[Approximation bounds] \label{lemmaapproximationerror}
For $\bu \in H^{k+1}(\Gamma)^3$ and $p \in H^{k}(\Gamma)$  the following approximation error bounds hold:
\begin{multline} \label{Eq2}
\min_{(\bv_h,q_h) \in \bU_h \times Q_h} \left( \Vert \bu^e - \bv_h \Vert_{A} + \Vert p^e - q_h \Vert_{M} \right)
\lesssim   h^{k} \left(\Vert \bu \Vert_{H^{k+1}(\Gamma)} + \Vert p \Vert_{H^{k}(\Gamma)}\right).
\end{multline}
\end{lemma}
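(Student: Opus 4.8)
The plan is to prove the bound by exhibiting one concrete competitor pair $(\bv_h,q_h)\in\bU_h\times Q_h$ and estimating, one by one, the contributions to the norms in \eqref{defnorms}. For the velocity I would take the componentwise parametric interpolant $\bv_h:=I_\Theta^k\bu^e\in\bU_h$ (well defined since $k\geq 2$ makes $\bu^e$ continuous), and for the pressure $q_h:=I_\Theta^{k-1}p^e-m_h$, where the constant $m_h:=|\Gamma_h|^{-1}\int_{\Gamma_h}I_\Theta^{k-1}p^e\,ds_h$ is subtracted so that $q_h\in V_{h,\Theta}^{k-1}\cap L^2_0(\Gamma_h)=Q_h$. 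It then suffices to show $\|\bu^e-\bv_h\|_A\lesssim h^k\|\bu\|_{H^{k+1}(\Gamma)}$ and $\|p^e-q_h\|_M\lesssim h^k\|p\|_{H^k(\Gamma)}$, and for this one peels off the four ingredients of $\|\cdot\|_A$ (the $E_h$ part and the $L^2(\Gamma_h)$ part of $a_h$, the volume stabilization $s_h$ with $\rho_u\simeq h^{-1}$, and the penalty $k_h$ with $\eta\simeq h^{-2}$) and the two ingredients of $\|\cdot\|_M$ (the $L^2(\Gamma_h)$ part and the volume term with $\rho_p\simeq h$), as fixed in \eqref{choicerho}.

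For the \emph{surface} terms I would invoke Lemma~\ref{lemmascalarapproximationerror} componentwise, with polynomial degree $k$ for $\bu$ and $k-1$ for $p$. Writing $\bw:=\bu^e-\bv_h$, the identity $E_h(\bw)=\tfrac12(\gradGh\bw+\gradGh^T\bw)-(\bw\cdot\bn_h)\bH_h$ together with $\|\bH_h\|_{L^\infty(\Gamma_h)}\lesssim 1$ (from the assumed bound on $\|\bH-\bH_h\|_{L^\infty(\Gamma_h)}$) gives $\|E_h(\bw)\|_{L^2(\Gamma_h)}\lesssim\|\nabla\bw\|_{L^2(\Gamma_h)}+\|\bw\|_{L^2(\Gamma_h)}\lesssim h^k\|\bu\|_{H^{k+1}(\Gamma)}$; the $L^2$ part is bounded by $\|\bP_h\bw\|_{L^2(\Gamma_h)}\le\|\bw\|_{L^2(\Gamma_h)}\lesssim h^{k+1}\|\bu\|_{H^{k+1}(\Gamma)}$; and for the penalty the crude estimate $\|\bw\cdot\tilde{\bn}_h\|_{L^2(\Gamma_h)}\le\|\bw\|_{L^2(\Gamma_h)}\lesssim h^{k+1}\|\bu\|_{H^{k+1}(\Gamma)}$ already yields $\eta\|\bw\cdot\tilde{\bn}_h\|_{L^2(\Gamma_h)}^2\simeq h^{-2}h^{2k+2}\|\bu\|^2_{H^{k+1}(\Gamma)}\lesssim h^{2k}\|\bu\|^2_{H^{k+1}(\Gamma)}$, so the extra accuracy of $\tilde{\bn}_h$ is not even needed here. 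For the pressure, Lemma~\ref{lemmascalarapproximationerror} gives $\|p^e-I_\Theta^{k-1}p^e\|_{L^2(\Gamma_h)}\lesssim h^k\|p\|_{H^k(\Gamma)}$, and the mean correction obeys $|m_h|\lesssim|\Gamma_h|^{-1}\big(\|I_\Theta^{k-1}p^e-p^e\|_{L^2(\Gamma_h)}+|\int_{\Gamma_h}p^e\,ds_h|\big)$; since $\int_\Gamma p\,ds=0$, a change of variables between $\Gamma_h$ and $\Gamma$ with surface-measure ratio $\mu_h$ satisfying $|\mu_h-1|\lesssim h^{k+1}$ (a consequence of \eqref{distres} and Lemma~\ref{lemmanormals}) bounds $|\int_{\Gamma_h}p^e\,ds_h|\lesssim h^{k+1}\|p\|_{L^2(\Gamma)}$, so $|m_h|\lesssim h^k\|p\|_{H^k(\Gamma)}$ and hence $\|p^e-q_h\|_{L^2(\Gamma_h)}\lesssim h^k\|p\|_{H^k(\Gamma)}$.

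For the two \emph{volume} terms I would instead combine the bulk interpolation estimate \eqref{eqinterpolationerror} with the extension bound \eqref{lemmasobolevnormsneighborhood} at scale $\epsilon\simeq h$: since $\Omega_{\Theta}^{\Gamma}$ lies inside an $O(h)$-neighborhood of $\Gamma$, one has $\|\bu^e\|_{H^{k+1}(\Omega_{\Theta}^{\Gamma})}\lesssim h^{1/2}\|\bu\|_{H^{k+1}(\Gamma)}$ and $\|p^e\|_{H^{k}(\Omega_{\Theta}^{\Gamma})}\lesssim h^{1/2}\|p\|_{H^{k}(\Gamma)}$, whence, summing \eqref{eqinterpolationerror} over $T\in\T_h^\Gamma$ with $l=1$, $\|\nabla(\bu^e-\bv_h)\|_{L^2(\Omega_{\Theta}^{\Gamma})}\lesssim h^{k+1/2}\|\bu\|_{H^{k+1}(\Gamma)}$ and $\|\nabla(p^e-q_h)\|_{L^2(\Omega_{\Theta}^{\Gamma})}=\|\nabla(p^e-I_\Theta^{k-1}p^e)\|_{L^2(\Omega_{\Theta}^{\Gamma})}\lesssim h^{k-1/2}\|p\|_{H^{k}(\Gamma)}$ (the constant $m_h$ drops out). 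Since $|\bn_h|=1$, multiplying by $\rho_u\simeq h^{-1}$, respectively $\rho_p\simeq h$, produces the desired $h^{2k}$-bounds for $s_h$ and for the volume part of $\|\cdot\|_M$. Collecting the six estimates gives $\|\bu^e-\bv_h\|_A+\|p^e-q_h\|_M\lesssim h^k(\|\bu\|_{H^{k+1}(\Gamma)}+\|p\|_{H^k(\Gamma)})$, which is the assertion. The one delicate point is the bookkeeping of $h$-powers in the volume terms: the required gain must come from matching the half-order loss in \eqref{lemmasobolevnormsneighborhood} (integration over an $O(h)$-thick slab) against the stabilization weights $\rho_u\simeq h^{-1}$, $\rho_p\simeq h$ of \eqref{choicerho}, and with any other scaling of $\rho_u,\rho_p$ one would not land on the optimal order, which is precisely why those scalings were imposed.
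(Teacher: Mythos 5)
Your argument is correct and is essentially the proof the paper relies on: the paper does not reproduce it but defers to the companion reference, where exactly this interpolant-based estimate is carried out --- surface contributions via (the degree-$k$ and degree-$(k-1)$ versions of) Lemma~\ref{lemmascalarapproximationerror}, the mean-value correction controlled through the $O(h^{k+1})$ surface-measure perturbation, and the volume stabilization terms via \eqref{eqinterpolationerror} combined with the $h^{1/2}$ gain in \eqref{lemmasobolevnormsneighborhood} against the weights \eqref{choicerho}. Your closing remark correctly identifies the only delicate point, namely that the scalings $\rho_u\simeq h^{-1}$, $\rho_p\simeq h$ are exactly what make the half-order loss from integrating over the $O(h)$-thick strip harmless.
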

\subsection{Consistency error analysis} The goal of this section is to provide an estimate of the consistency term on the right-hand side of \eqref{eqStrang}. We will use results obtained for a vector-Laplace problem in  \cite{jankuhn2019higher}. The variatonal formulation of that vector-Laplace problem results in a bilinear form that is the same as the $A_h(\cdot,\cdot)$ bilinear form, which is part of $\mathcal{A}_h(\cdot,\cdot)$ in \eqref{defcA}.

\subsubsection{Preliminaries} \label{preliminaries}
We start with results concerning the transformation of the integrals between $\Gamma$ and $\Gamma_h$. Using that the gradient of the closest point projection is given by $\nabla p = \bP- d \bH$, one computes for $u \in H^1(\Gamma)$ and  $x \in \Gamma_h$
\begin{equation} \label{transfo1} \gradGh u^e(x) 
 = \bB^T(x) \gradG u(p(x)),~~  \text{with}~ \bB=\bB(x) := \bP(\bI - d\bH)\bP_h.
\end{equation}

The following properties of $\bB$ are known in the literature~\cite{hansbo2016analysis}:
\begin{lemma} \label{lemmaB}
For $x \in \Gamma_h$ and $\bB=\bB(x)$ as above, the map $\bB$ is invertible on the range of $\bP$ for $h$ small enough, i.e. there is $\bB^{-1} \colon {\rm range} (\bP(x)) \to {\rm range} (\bP_h(x))$ such that
$\bB\bB^{-1} = \bP, \, \bB^{-1} \bB = \bP_h$,
and we have for $u \in H^1(\Gamma)$, $x \in \Gamma_h$,
\begin{equation*}
\gradG u(p(x)) = \bP(x)\bB^{-T}(x)\gradGh u^e(x).
\end{equation*}
Furthermore, the following estimates hold:
\[
\begin{split}
\Vert \bB \Vert_{L^\infty(\Gamma_h)} + \Vert \bP_h\bB^{-1} \bP \Vert_{L^\infty(\Gamma_h)} &\lesssim 1,\\\
\Vert \bP \bP_h - \bB \Vert_{L^\infty(\Gamma_h)}+\Vert \bP_h \bP - \bP_h\bB^{-1}\bP \Vert_{L^\infty(\Gamma_h)} &\lesssim h^{k+1}.
\end{split}
\]
For the surface measures on $\Gamma$ and $\Gamma_h$  the identity
$
d\Gamma = \vert \bB \vert d\Gamma_h
$ holds,
with $\vert \bB \vert = \vert det(\bB) \vert$, and we have the estimates
\begin{equation*}
\Vert 1- \vert \bB \vert \Vert_{L^{\infty}(\Gamma_h)} \lesssim h^{k+1}, \quad \Vert \vert \bB \vert \Vert_{L^{\infty}(\Gamma_h)} \lesssim 1 , \quad \Vert \vert \bB \vert^{-1} \Vert_{L^{\infty}(\Gamma_h)} \lesssim 1.
\end{equation*}
\end{lemma}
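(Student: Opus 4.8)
The plan is to establish the assertions in the order in which they are stated, working pointwise for $x\in\Gamma_h$ with $\bB = \bB(x) = \bP(\bI - d\bH)\bP_h$. Two quantitative inputs drive everything: $\|d\|_{L^\infty(\Gamma_h)}\lesssim h^{k+1}$, which follows from the distance estimate \eqref{distres}, and $\|\bn - \bn_h\|_{L^\infty(\Gamma_h)}\lesssim h^k$ from Lemma~\ref{lemmanormals}. The algebraic point that keeps every rate at order $h^{k+1}$ rather than the crude $h^k$ is that $\bP\bn = \bP_h\bn_h = 0$, so that $\bP\bn_h = \bP(\bn_h-\bn)$ and $\bP_h\bn = \bP_h(\bn-\bn_h)$; hence any term carrying two factors of $\bn_h-\bn$ is $O(h^{2k})$, and since $k\geq 2$ we have $2k\geq k+1$. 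In particular I would note at once that $\bB - \bP\bP_h = -\,d\,\bP\bH\bP_h$, which, using boundedness of $\bH$ on the smooth surface, immediately gives $\|\bB\|_{L^\infty(\Gamma_h)}\lesssim 1$ and $\|\bP\bP_h - \bB\|_{L^\infty(\Gamma_h)}\lesssim h^{k+1}$.

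For invertibility I would regard $\bB$ as a linear map from the two-dimensional space ${\rm range}(\bP_h)$ into ${\rm range}(\bP)$ and prove injectivity: if $\bB\bv=0$ with $\bv\perp\bn_h$, then $(\bI-d\bH)\bv$ is parallel to $\bn$; since $\bI-d\bH$ is invertible for small $h$ with inverse within $O(h^{k+1})$ of $\bI$, $\bv$ is then parallel to $(\bI-d\bH)^{-1}\bn$, and $\bn_h\cdot\bv=0$ forces $\bv=0$ because $\bn_h\cdot(\bI-d\bH)^{-1}\bn$ converges to $1$ as $h\to0$ and hence is bounded away from zero. I would then define $\bB^{-1}$ as the inverse of this bijection extended by zero on ${\rm range}(\bn)$, so that $\bB^{-1}=\bP_h\bB^{-1}\bP$, $\bB\bB^{-1}=\bP$ and $\bB^{-1}\bB=\bP_h$. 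For the norm bounds I would compare with the unperturbed map $\bC := \bP\bP_h$ restricted to ${\rm range}(\bP_h)$: using $\bP_h\bP\bP_h=\bP_h+O(h^{2k})$ and $\bP\bP_h\bP=\bP+O(h^{2k})$ one sees that $\bC$ is invertible with $\bC^{-1}=\bP_h\bP+O(h^{2k})$ between the relevant subspaces, in particular $\|\bC^{-1}\|_{L^\infty}\lesssim1$; since $\bB = \bC + O(h^{k+1})$, a Neumann-series perturbation argument gives $\|\bB^{-1}\|_{L^\infty(\Gamma_h)}\lesssim1$, hence $\|\bP_h\bB^{-1}\bP\|_{L^\infty(\Gamma_h)}\lesssim1$, together with the identity $\bB^{-1}-\bC^{-1}=\bB^{-1}(\bC-\bB)\bC^{-1}=O(h^{k+1})$. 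Combining the last two facts (and $\bP_h\bB^{-1}\bP=\bB^{-1}$) yields $\|\bP_h\bP-\bP_h\bB^{-1}\bP\|_{L^\infty(\Gamma_h)}\lesssim h^{k+1}$, which is the remaining estimate.

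The gradient identity I would obtain by simply inverting the already available relation \eqref{transfo1}, $\gradGh u^e(x)=\bB^T\gradG u(p(x))$: since $\gradG u(p(x))\in{\rm range}(\bP)$, $\gradGh u^e(x)\in{\rm range}(\bP_h)$, and $\bB^{-T}:=(\bB^{-1})^T=\bP\bB^{-T}\bP_h$ satisfies $\bB^{-T}\bB^T=(\bB\bB^{-1})^T=\bP$, applying $\bP\bB^{-T}$ to both sides gives $\bP(x)\bB^{-T}(x)\gradGh u^e(x)=\bP(x)\gradG u(p(x))=\gradG u(p(x))$. For the surface-measure part I would use the standard area-element computation for the closest-point lift: the tangential Jacobian of $p|_{\Gamma_h}$ is exactly $\nabla p\,\bP_h = (\bP-d\bH)\bP_h = \bB$ (which is the reason $\bB$ is defined this way), so $d\Gamma=|\bB|\,d\Gamma_h$ with $|\bB|$ the associated area distortion factor (equal to $|\det\bB|$ in the sense of the paper); and since $\bB=\bP\bP_h+O(h^{k+1})$ while the area distortion of $\bP\bP_h$ between the two tangent planes equals $\bn\cdot\bn_h=1+O(h^{2k})$, Lipschitz dependence of the area factor on the matrix (on bounded sets) gives $|\bB|=1+O(h^{k+1})$, hence $\|1-|\bB|\|_{L^\infty(\Gamma_h)}\lesssim h^{k+1}$ and the boundedness of $|\bB|$ and $|\bB|^{-1}$ for $h$ small. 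Alternatively, the factor splits as $(\bn\cdot\bn_h)\prod_i(1-d\kappa_i)$ with $\kappa_i$ the principal curvatures of $\Gamma$, and each factor is $1+O(h^{k+1})$.

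I expect the genuine difficulty to be the bookkeeping associated with the rank-deficient matrices $\bB$ and $\bC$: one has to keep careful track of which two-dimensional subspace (${\rm range}(\bP)$ or ${\rm range}(\bP_h)$) each operator and its partial inverse acts on, and verify that all perturbation terms can be pushed to order $h^{k+1}$ using that the $O(h^k)$ error in $\bn_h$ only enters quadratically while the genuine $O(h^{k+1})$ contribution comes from $d$. The area-element identity $d\Gamma=|\bB|\,d\Gamma_h$ itself is classical and, if one prefers, the whole lemma can simply be cited from \cite{hansbo2016analysis,Demlow06}.
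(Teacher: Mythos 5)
The paper does not prove this lemma at all: it simply states that these properties of $\bB$ ``are known in the literature'' and cites \cite{hansbo2016analysis} (see also \cite{Demlow06}), so there is no in-paper argument to compare against. Your reconstruction is correct and is essentially the standard proof found in those references: the decomposition $\bB-\bP\bP_h=-d\,\bP\bH\bP_h$ together with $|d|\lesssim h^{k+1}$ gives the $O(h^{k+1})$ perturbation of $\bP\bP_h$; the injectivity argument on ${\rm range}(\bP_h)$ and the Neumann-series comparison with $\bC=\bP\bP_h$ (whose two-sided inverse between the tangent planes is $\bP_h\bP$ up to $O(h^{2k})$) yield the bounds on $\bP_h\bB^{-1}\bP$; and the identification of $\bB$ with the tangential Jacobian $\nabla p\,\bP_h$ of the closest-point map gives $d\Gamma=|\bB|\,d\Gamma_h$ and $|\bB|=(\bn\cdot\bn_h)\prod_i(1-d\kappa_i)=1+O(h^{k+1})$. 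Your key bookkeeping observation --- that the $O(h^k)$ normal error only ever enters quadratically via $\bP\bn_h=\bP(\bn_h-\bn)$ and $\bP_h\bn=\bP_h(\bn-\bn_h)$, while the genuinely linear contribution comes from $d$, and $2k\geq k+1$ for $k\geq 1$ --- is exactly the mechanism that makes all the stated rates come out as $h^{k+1}$, so the argument is complete as written.
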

Applying Lemma \ref{lemmaB} yields, for $u \in H^1(\Gamma)$,
\begin{equation*}
\gradG u^l(p(x)) = \bP(x)\bB^{-T}(x)\gradGh u(x), \quad x \in \Gamma_h.
\end{equation*}
Similar useful transformation results for vector-valued functions are given in the following corollary  from \cite{jankuhn2019higher}:
\begin{corollary} \label{corollarygradients}
For $\bu \in H^1(\Gamma)^3$ and $\bv \in H^1(\Gamma_h)^3$ we have
\begin{equation*} \begin{split}
\left(\nabla \bu^e \bP\right)^e &= \nabla \bu^e \bP = \nabla \bu^e \bP_h \bB^{-1}\bP \quad \text{on } \Gamma_h, \\
\left(\nabla \bv^l \bP \right)^e &= \nabla \bv^l \bP = \nabla \bv^l \bP_h \bB^{-1} \bP \quad \text{on } \Gamma_h.
\end{split}
\end{equation*}
\end{corollary}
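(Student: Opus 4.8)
The plan is to reduce the two matrix-valued chains of identities to the scalar transformation rule~\eqref{transfo1} and to the partial-inverse relations $\bB\bB^{-1}=\bP$, $\bB^{-1}\bB=\bP_h$ of Lemma~\ref{lemmaB}, working row by row on the Jacobian. Let $\bg$ denote $\bu^e$ in the first line of the corollary and $\bv^l$ in the second: in both cases $\bg$ is the normal extension to $U_\delta$ of an $H^1$ function on $\Gamma$, so $\bg=\bg\circ p$ on $U_\delta$. Differentiating this relation gives $\nabla\bg(x)=\nabla\bg(p(x))\,\nabla p(x)$, and since $\nabla p=\bP-d\bH$ with $(\bP-d\bH)\bn=\mathbf 0$, the normal derivative of every component of $\bg$ vanishes identically; hence $\nabla\bg\,\bn=\mathbf 0$ and $\nabla\bg\,\bP=\nabla\bg$ throughout $U_\delta$. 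This is the middle identity in both chains; the first identity, which states that this quantity may be evaluated on $\Gamma_h$ either directly or through its trace on $\Gamma$, follows from the same chain rule together with $\nabla p=\bP$ on $\Gamma$. The substantive claim is the last one.

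For that last identity I would argue componentwise. Write $g_i$ for the $i$-th component of $\bg$; then the $i$-th row of $\nabla\bg$ is $(\nabla g_i)^T$, and by the previous step $\nabla g_i$ is tangential to $\Gamma$ at every point of $\Gamma$, so $\gradG g_i=\nabla g_i$ on $\Gamma$. Inserting $g_i$ into~\eqref{transfo1} gives $\bP_h\,\nabla g_i(x)=\bB^T(x)\,\nabla g_i(p(x))$ for $x\in\Gamma_h$. Transposing (using $\bP_h^T=\bP_h$), multiplying on the right by $\bB^{-1}\bP$, and using $\bB\bB^{-1}=\bP$ together with $\bP(x)\nabla g_i(p(x))=\nabla g_i(p(x))$ (tangentiality at $p(x)$, and $\bP(x)=\bP(p(x))$), I obtain $\nabla g_i(x)^T\,\bP_h\bB^{-1}\bP=\nabla g_i(p(x))^T$, which is exactly the $i$-th row of $(\nabla\bg\,\bP)^e$ read on $\Gamma_h$. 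Stacking the three rows yields $(\nabla\bg\,\bP)^e=\nabla\bg\,\bP_h\bB^{-1}\bP$ on $\Gamma_h$; taking $\bg=\bu^e$ and then $\bg=\bv^l$ gives the two lines of the corollary. (The same row identity can also be read off directly from the vector transformation formula in Lemma~\ref{lemmaB}, applied componentwise.)

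The step that I expect to require the most care is bookkeeping rather than analysis: the scalar rule~\eqref{transfo1} is stated for the column vector $\gradGh$, whereas the corollary multiplies the Jacobian on the right by $\bP$ and $\bP_h$, so one must transpose consistently and keep track of the subspaces --- $\bB^{-1}$ maps $\operatorname{range}(\bP)$ onto $\operatorname{range}(\bP_h)$ and $\bB$ the reverse, so the relations $\bB\bB^{-1}=\bP$, $\bB^{-1}\bB=\bP_h$ may be invoked only on those subspaces. Once the ranges are tracked, the computation is exact and no estimates enter; the geometry of the approximation $\Gamma_h\approx\Gamma$ intervenes only implicitly, through the invertibility of $\bB$ on $\operatorname{range}(\bP)$ for $h$ small, which is part of Lemma~\ref{lemmaB}.
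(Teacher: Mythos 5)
Your proof of the substantive identity $(\nabla\bg\,\bP)^e = \nabla\bg\,\bP_h\bB^{-1}\bP$ on $\Gamma_h$ is correct and follows the same route as the source the paper cites for this corollary: apply the scalar rule \eqref{transfo1} to each component $g_i$, transpose, right-multiply by $\bB^{-1}\bP$, and use $\bB\bB^{-1}=\bP$ together with the tangentiality of $\gradG g_i$ at $p(x)$ and $\bP(x)=\bP(p(x))$. The range bookkeeping you flag is indeed the only delicate point there, and you handle it correctly; the paper itself gives no proof and refers to \cite{jankuhn2019higher}, where exactly this componentwise reduction to the scalar transformation rule is used.

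The step that does not hold up is your justification of the first equality in each chain. You correctly derive $\nabla\bg\,\bn=\mathbf 0$, hence $\nabla\bg\,\bP=\nabla\bg$ throughout $U_\delta$, but then claim that $(\nabla\bg\,\bP)^e=\nabla\bg\,\bP$ on $\Gamma_h$ ``follows from the same chain rule together with $\nabla p=\bP$ on $\Gamma$.'' That argument only yields the identity \emph{on} $\Gamma$. At $x\in\Gamma_h$ with $d(x)\neq 0$ the chain rule gives $\nabla\bg(x)=\nabla\bg(p(x))\,(\bP-d\bH)(x)$, so that
\begin{equation*}
\nabla\bg(x)\,\bP(x)=\nabla\bg(p(x))\,\bP - d\,\nabla\bg(p(x))\,\bH ,
\end{equation*}
which differs from $(\nabla\bg\,\bP)^e(x)=\nabla\bg(p(x))\,\bP$ by the term $d\,\nabla\bg(p(x))\,\bH=O(h^{k+1})$; this term is nonzero already for the unit sphere, where $\bH(p(x))=\bP(p(x))$. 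So either the first equality is read under the paper's stated convention of not distinguishing a function on $\Gamma$ from its normal extension --- in which case it is purely notational and needs no chain-rule argument --- or it is meant literally, in which case it is only an $O(h^{k+1})$-approximate identity and your derivation of it cannot be repaired. Since the identity actually used downstream (for instance in the proof of Lemma~\ref{lemmabdifference}) is the equality of the first and third expressions, which you do prove exactly, this does not damage the error analysis; but as written the middle link of your chain is not justified, and you should either invoke the notational convention explicitly or prove first~$=$~third directly, as you in fact do.
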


\subsubsection{Consistency error bounds} \label{sectgeometry} We are now prepared to estimate the last term on the right-hand side of \eqref{eqStrang}.
We introduce further notation.
We define, for $\bv, \bw \in \bV_{reg,h}$, $q \in V_{reg,h}$:
\begin{align*}
 G (\bv,\bw) &:= a_{h}(\bv,\bw) - a(\bP\bv^l,\bP \bw^l) + s_h(\bv,\bw) + k_h(\bv,\bw), \\
 G_{b}(\bv,q) &:= b_h(\bv, q) - b(\bP\bv^l, q^l), \qquad G_f(\bw)   := (\bbf,\bw^l)_{L^2(\Gamma)} - (\bbf_h,\bw)_{L^2(\Gamma_h)}, \\
  G_g(q)   &:= (g_h,q)_{L^2(\Gamma_h)} - (g,q^l)_{L^2(\Gamma)}.
\end{align*}
Let $(\bu, p)  \in \bV_T \times L^2_0(\Gamma)$ be the unique solution of problem \eqref{contform} and $(\bv_h,q_h) \in \bU_h \times Q_h$.
The consistency term in \eqref{eqStrang} can be written as
\begin{equation} \label{consterm1} \begin{split}
&\mathcal{A}_h((\bu^e,p^e), (\bv_h,q_h)) - (\bbf_h, \bv_h)_{L^2(\Gamma_h)} + (g_h, q_h)_{L^2(\Gamma_h)}  \\
& = A_h(\bu^e, \bv_h) + b_h(\bv_h,p^e) + b_h(\bu^e, q_h) -  \tilde{s}_h(p^e,q_h) - (\bbf_h, \bv_h)_{L^2(\Gamma_h)} + (g_h, q_h)_{L^2(\Gamma_h)}  \\
&\qquad + \underbrace{(\bbf, \bv_h^l)_{L^2(\Gamma)} -  (g, q_h^l)_{L^2(\Gamma)}  -  a(\bu,\bP\bv_h^l) - b(\bP\bv_h^l, p) - b(\bu, q_h^l)}_{=0} \\
&= G (\bu^e, \bv_h) + G_b(\bv_h,p^e) + G_b(\bu^e, q_h) -  \tilde{s}_h(p^e,q_h) + G_f(\bv_h) + G_g(q_h).
\end{split}
\end{equation}
In \cite[Lemma 5.15, 5.18]{jankuhn2019higher} several $G$-terms in \eqref{consterm1} have already been analyzed. We collect these results in the following lemma.

\begin{lemma}\label{lemconslaplace}
Let $\bbf_h$ and $g_h$ be approximations of $\bbf$ and $g$ such that $\Vert \vert \bB\vert \bbf^e - \bbf_h \Vert_{L^2(\Gamma_h)} \lesssim h^{k+1} \Vert \bbf \Vert_{L^2(\Gamma)}$ and $\Vert \vert \bB\vert g^e - g_h \Vert_{L^2(\Gamma_h)} \lesssim h^{k+1} \Vert g \Vert_{L^2(\Gamma)}$. For the unique solution $(\bu, p)  \in \bV_T \times L^2_0(\Gamma)$ of problem \eqref{contform} and for all $(\bv_h,q_h) \in \bU_h \times Q_h$ the following holds:
\begin{equation*} \begin{split}
\vert G (\bu^e, \bv_h) \vert &\lesssim h^{k}\Vert \bu \Vert_{H^1(\Gamma)}\Vert \bv_h \Vert_{A}, \quad
\vert \tilde{s}_h(p^e,q_h) \vert \lesssim h^{k} \Vert p \Vert_{H^1(\Gamma)} \Vert q_h \Vert_M, \\
\vert G_f(\bv_h) \vert &\lesssim h^{k+1} \Vert \bbf \Vert_{L^2(\Gamma)} \Vert \bv_h \Vert_{L^2(\Gamma_h)}, \quad
\vert G_g(q_h) \vert \lesssim h^{k+1} \Vert g \Vert_{L^2(\Gamma)} \Vert q_h \Vert_{L^2(\Gamma_h)}.
\end{split}
\end{equation*}
\end{lemma}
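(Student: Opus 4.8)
\textbf{Proof plan for Lemma~\ref{lemconslaplace}.}

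The statement collects four consistency bounds, and my plan is to dispatch them in increasing order of difficulty, reusing the vector-Laplace machinery as much as possible. The key background facts I would invoke are: the transformation identities of Lemma~\ref{lemmaB} and Corollary~\ref{corollarygradients}, the surface-measure estimate $\||\bB|-1\|_{L^\infty(\Gamma_h)}\lesssim h^{k+1}$, the equivalences between $\Gamma$- and $\Gamma_h$-norms listed after Lemma~\ref{lemmanormals}, the definition \eqref{defnorms} of $\|\cdot\|_A$ and $\|\cdot\|_M$, and the coercivity estimate \eqref{EstEll} that guarantees $\|\bv_h\|_{L^2(\Gamma_h)}\le\|\bv_h\|_A$ and $\|q_h\|_{L^2(\Gamma_h)}\le\|q_h\|_M$. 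I would also use the normal-extension bound $\|\nabla(\bu\cdot\bn_h)\bn_h\|$-type estimates and the assumptions $\|\bn-\tilde\bn_h\|_{L^\infty}\lesssim h^{k+1}$, $\|\bH-\bH_h\|_{L^\infty}\lesssim h^{k-1}$, $\|\bn_h-\bn\|_{L^\infty}\lesssim h^k$ on the various geometric approximations.

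\emph{The two data terms $G_f$ and $G_g$.} These are the easiest. Writing $(\bbf,\bv_h^l)_{L^2(\Gamma)}=\int_{\Gamma_h}|\bB|\,\bbf^e\cdot\bv_h\,ds_h$ via the change of variables $d\Gamma=|\bB|\,d\Gamma_h$ and $\bv_h^l\circ p=\bv_h$ on $\Gamma_h$, one gets $G_f(\bv_h)=\int_{\Gamma_h}(|\bB|\bbf^e-\bbf_h)\cdot\bv_h\,ds_h$, and Cauchy--Schwarz together with the hypothesis $\||\bB|\bbf^e-\bbf_h\|_{L^2(\Gamma_h)}\lesssim h^{k+1}\|\bbf\|_{L^2(\Gamma)}$ gives the claimed bound. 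The estimate for $G_g$ is identical with $q_h$ in place of $\bv_h$. No geometric subtlety here beyond bookkeeping the Jacobian.

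\emph{The pressure-stabilization term $\tilde s_h(p^e,q_h)$.} Here the point is that $p$ is a genuinely smooth surface function, so its normal extension $p^e$ satisfies $\bn\cdot\nabla p^e=0$ exactly; hence $\bn_h\cdot\nabla p^e=(\bn_h-\bn)\cdot\nabla p^e$, which is $O(h^k)$ pointwise by Lemma~\ref{lemmanormals} and $\|\nabla p^e\|_{L^\infty}\lesssim\|p\|_{W^{1,\infty}(\Gamma)}$ (or, after Cauchy--Schwarz in $L^2(\Omega_\Theta^\Gamma)$, using \eqref{lemmasobolevnormsneighborhood} with $\epsilon=h$ to pick up the missing $h^{1/2}$ against the $\rho_p^{1/2}\simeq h^{1/2}$ weight hidden in $\|q_h\|_M$). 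Then $|\tilde s_h(p^e,q_h)|=\rho_p\big|\int_{\Omega_\Theta^\Gamma}(\bn_h\cdot\nabla p^e)(\bn_h\cdot\nabla q_h)\,dx\big|\lesssim \rho_p^{1/2} h^k\|p\|_{H^1(\Gamma)}\cdot\rho_p^{1/2}\|\bn_h\cdot\nabla q_h\|_{L^2(\Omega_\Theta^\Gamma)}\lesssim h^k\|p\|_{H^1(\Gamma)}\|q_h\|_M$, using $\rho_p\simeq h$ and \eqref{lemmasobolevnormsneighborhood}.

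\emph{The term $G(\bu^e,\bv_h)$.} This is exactly the geometric-consistency estimate for the $A_h$-form of the vector-Laplace problem, which is \cite[Lemma~5.15]{jankuhn2019higher}; since $A_h$ here is literally the same bilinear form (with the same penalty, volume-stabilization and zero-order pieces), the statement and proof transfer verbatim. Concretely, one splits $G$ into (i) the difference $a_h(\bu^e,\bv_h)-a(\bP\bu^e,\bP\bv_h^l)$ of rate-of-strain forms, which one controls by rewriting both via Corollary~\ref{corollarygradients} and Lemma~\ref{lemmaB}, picking up factors $\|\bP\bP_h-\bB\|_{L^\infty}\lesssim h^{k+1}$, $\||\bB|-1\|_{L^\infty}\lesssim h^{k+1}$, and $\|\bH-\bH_h\|_{L^\infty}\lesssim h^{k-1}$ (the last one against $u_N=\bu\cdot\bn=0$ on $\Gamma$, so that the Weingarten-tensor term in $E_h(\bu^e)$ is itself $O(h^k)$ after writing $\bu^e\cdot\bn_h=\bu^e\cdot(\bn_h-\bn)$); (ii) the zero-order difference $\int_{\Gamma_h}\bP_h\bu^e\cdot\bP_h\bv_h-\int_\Gamma\bu\cdot\bP\bv_h^l$, controlled by $\|\bP-\bP_h\|_{L^\infty}\lesssim h^k$ and the Jacobian estimate; (iii) the penalty term $k_h(\bu^e,\bv_h)=\eta\int_{\Gamma_h}(\bu^e\cdot\tilde\bn_h)(\bv_h\cdot\tilde\bn_h)$, where $\bu^e\cdot\tilde\bn_h=\bu^e\cdot(\tilde\bn_h-\bn)$ is $O(h^{k+1})$ so that $\eta^{1/2}\simeq h^{-1}$ still leaves $h^k$, and one bounds $\eta^{1/2}\|\bv_h\cdot\tilde\bn_h\|_{L^2(\Gamma_h)}\le\|\bv_h\|_A$; (iv) the volume term $s_h(\bu^e,\bv_h)$, where $\nabla\bu^e\bn_h=\nabla\bu^e(\bn_h-\bn)$ because $\bu^e$ is normally extended, again yielding the extra powers of $h$, closed off with \eqref{lemmasobolevnormsneighborhood} at $\epsilon=h$ against $\rho_u^{1/2}\simeq h^{-1/2}$.

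The main obstacle, and the reason the powers of $h$ come out right rather than one order short, is tracking \emph{which} geometric quantity is accurate to which order and pairing it with the correct weight in $\|\cdot\|_A$: the penalty and volume-stabilization weights $\eta\simeq h^{-2}$, $\rho_u\simeq h^{-1}$ would each destroy one or two orders of accuracy, and this is compensated exactly because (a) $\bu^e$ is a normal extension so $\nabla\bu^e\bn=0$ and $\bu^e\cdot\bn=0$ identically on $\Gamma_h$, turning $\bn_h$ and $\tilde\bn_h$ contributions into differences $\bn_h-\bn$ ($O(h^k)$) and $\tilde\bn_h-\bn$ ($O(h^{k+1})$), and (b) the missing half-power $h^{1/2}$ in the $\rho_u,\rho_p$-weighted volume norms is recovered from \eqref{lemmasobolevnormsneighborhood}. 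Once this bookkeeping is set up carefully the estimates are routine Cauchy--Schwarz arguments, and all four bounds follow; since $G$, $\tilde s_h$, $G_f$, $G_g$ are precisely the pieces isolated in \cite[Lemmas~5.15,~5.18]{jankuhn2019higher}, I would in the write-up simply cite those results after indicating the (identical) structure above.
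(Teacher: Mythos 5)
Your proposal is correct and follows essentially the same route as the paper: the paper's ``proof'' of Lemma~\ref{lemconslaplace} consists precisely of collecting the bounds from \cite[Lemmas~5.15, 5.18]{jankuhn2019higher}, which is what you do after sketching (accurately) the internal mechanics — the $|\bB|$-Jacobian rewriting for $G_f,G_g$, the cancellations $\bn\cdot\nabla p^e=0$, $\bu^e\cdot\bn=0$, $\nabla\bu^e\,\bn=0$ against the weights $\rho_p,\eta,\rho_u$, and the $\bB$-transformation identities for the $a_h$--$a$ difference. No gaps; your bookkeeping for $\tilde s_h$ in fact yields the slightly stronger order $h^{k+1}$, which of course implies the stated $h^k$ bound.
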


The two terms left to be analyzed are $G_b(\bv_h,p^e)$ and $G_b(\bu^e, q_h)$, which result from geometric inconsistencies due to the difference in the bilinear forms $b(\cdot,\cdot)$ and $b_h(\cdot,\cdot)$.  A bound for $G_b(\bv_h,p^e)$ can be easily derived using Lemma \ref{lemmaB}.  For the term $G_b(\bu^e, q_h)$, however, we need to locally apply Green's formula.

\begin{lemma} \label{lemmabdifference}
Let $(\bu, p)  \in \bV_T \times L^2_0(\Gamma)$ be the unique solution of \eqref{contform} and assume that $(\bu, p)  \in H^2(\Gamma)^3 \times H^1(\Gamma)$. Then for all $(\bv_h, q_h) \in \bU_h \times Q_h$  the following holds:
\begin{equation}\label{eqbdiff1}
\vert G_b(\bv_h,p^e) \vert \lesssim  h^{k} \Vert \bv_h \Vert_{A} \Vert p \Vert_{H^1(\Gamma)}, \quad
\vert G_b(\bu^e, q_h) \vert \lesssim h^{k} \Vert \bu \Vert_{H^2(\Gamma)} \Vert q_h \Vert_{M}.
\end{equation}
\end{lemma}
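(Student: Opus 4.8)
The plan is to reduce both inequalities in \eqref{eqbdiff1} to \emph{pointwise} comparisons of geometric factors, by transporting the integrals defining $b(\bP\cdot,\cdot)$ from $\Gamma$ to $\Gamma_h$ via the transformation identities of Lemma~\ref{lemmaB} and exploiting that the exact velocity $\bu$ is tangential to $\Gamma$.

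\emph{First estimate.} I would transform $b(\bP\bv_h^l,p)=\int_\Gamma \bP\bv_h^l\cdot\gradG p\,ds$ onto $\Gamma_h$. Since $(u^l)^e=u$ on $\Gamma_h$ and $\bP(x)=\bP(p(x))$ there, we have $(\bP\bv_h^l)^e=\bP\bv_h$ on $\Gamma_h$, and combining $d\Gamma=|\bB|\,d\Gamma_h$ with $\gradG p(p(x))=\bP\bB^{-T}\gradGh p^e(x)$ gives $G_b(\bv_h,p^e)=\int_{\Gamma_h}\bv_h\cdot\big(\bP_h-|\bB|\,\bP\bB^{-T}\big)\gradGh p^e\,ds_h$. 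Using $\|1-|\bB|\|_{L^\infty(\Gamma_h)}\lesssim h^{k+1}$, the mapping properties of $\bB^{-1}$ (which give $\bP\bB^{-T}=\bP\bB^{-T}\bP_h$), and the transpose of the bound $\|\bP_h\bP-\bP_h\bB^{-1}\bP\|_{L^\infty(\Gamma_h)}\lesssim h^{k+1}$ from Lemma~\ref{lemmaB}, the matrix equals $\bn\bn^T\bP_h$ up to an $O(h^{k+1})$ $L^\infty$-remainder. Its $\bn\bn^T\bP_h$-part contributes $(\bv_h\cdot\bn)\,(\bn\cdot\gradGh p^e)$, and here $\|\bv_h\cdot\bn\|_{L^2(\Gamma_h)}\lesssim h\|\bv_h\|_A$ (from the penalty term of $\|\cdot\|_A$, with $\eta\simeq h^{-2}$ and $\|\bn-\tilde{\bn}_h\|_{L^\infty(\Gamma_h)}\lesssim h^{k+1}$), while $\bn\cdot\gradGh p^e=(\bn-\bn_h)\cdot\gradGh p^e$ is $O(h^{k})$ pointwise by Lemma~\ref{lemmanormals} and $\|\gradGh p^e\|_{L^2(\Gamma_h)}\lesssim\|p\|_{H^1(\Gamma)}$; the remainder is handled by Cauchy--Schwarz together with $\|\bv_h\|_{L^2(\Gamma_h)}\lesssim\|\bv_h\|_A$. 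This proves the first bound in \eqref{eqbdiff1}.

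\emph{Second estimate.} The same manipulation, now with $\gradGh q_h=\bB^T\gradG q_h^l(p(\cdot))$ and $\bP(\bu^e)^l=\bu$, gives $G_b(\bu^e,q_h)=\int_{\Gamma_h}\big((\bB-|\bB|\,\bI)\bu^e\big)\cdot\gradG q_h^l(p(\cdot))\,ds_h$. The decisive point is that $\bu^e\cdot\bn\equiv0$ on $\Gamma_h$ --- because $\bu$ is tangential to $\Gamma$ and $\bn=\nabla d$ is constant along normal lines --- so that $\bP\bu^e=\bu^e$ and $\bn_h\cdot\bu^e=(\bn_h-\bn)\cdot\bu^e$. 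Substituting this into $\bB=\bP(\bI-d\bH)\bP_h$ and using $\|\bn-\bn_h\|_{L^\infty(\Gamma_h)}\lesssim h^k$, $\|d\|_{L^\infty(\Gamma_h)}\lesssim h^{k+1}$, $\|\bH\|_{L^\infty}\lesssim1$, $\|1-|\bB|\|_{L^\infty(\Gamma_h)}\lesssim h^{k+1}$ and $k\ge2$, every $O(h^k)$-size term either cancels or appears squared, leaving $|(\bB-|\bB|\,\bI)\bu^e|\lesssim h^{k+1}|\bu^e|$ pointwise. It then remains to bound $\|\gradG q_h^l(p(\cdot))\|_{L^2(\Gamma_h)}\simeq\|\gradGh q_h\|_{L^2(\Gamma_h)}\le\|\nabla q_h\|_{L^2(\Gamma_h)}\lesssim h^{-1}\|q_h\|_M$ by the surface inverse inequality \eqref{HH4}, and to use $\|\bu^e\|_{L^2(\Gamma_h)}\simeq\|\bu\|_{L^2(\Gamma)}$; Cauchy--Schwarz then yields $|G_b(\bu^e,q_h)|\lesssim h^{k+1}\cdot h^{-1}\|\bu\|_{L^2(\Gamma)}\,\|q_h\|_M=h^{k}\|\bu\|_{L^2(\Gamma)}\,\|q_h\|_M$, which implies the second bound in \eqref{eqbdiff1}.

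The only delicate step is the bookkeeping behind $|(\bB-|\bB|\,\bI)\bu^e|\lesssim h^{k+1}|\bu^e|$: without the cancellation $\bu^e\cdot\bn=0$ one gets only $O(h^k)$, and after the inverse inequality this would produce the suboptimal order $h^{k-1}$, so the gain of one power has to be tracked carefully through all the factors of $\bB$ and $|\bB|$. An alternative --- the route indicated in the text just before the lemma --- is to keep the derivative on $q_h$: apply Green's formula patch-by-patch (identity \eqref{aux1467}) to $b_h(\bu^e,q_h)$, dispose of the curvature term $\sum_{T}\int_{\Gamma_T}(\bu^e\cdot\bn_h)q_h\divGh\bn_h\,ds$ again via $\bu^e\cdot\bn=0$ and Lemma~\ref{lemmanormals}, and estimate the co-normal-jump term $\sum_{E\in\mathcal{E}_h}\int_E[\nu_h\cdot\bu^e]q_h\,dl$ with Lemma~\ref{lemmaedgeu}. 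In that route the obstacle is that reaching order $h^k$ requires a sharpened estimate $\|\bP[\nu_h]\|_{L^\infty(\mathcal{E}_h)}\lesssim h^{k+1}$ (the $\Gamma_h$-analogue of \eqref{improvedest}), needed to exploit the tangentiality of $\bu^e$, and that the iterated edge-trace estimate for $\bu^e$ on $\mathcal{E}_h$ forces the regularity assumption $\bu\in H^2(\Gamma)$ appearing in the statement.
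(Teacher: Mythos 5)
Your proposal is correct. For the first bound in \eqref{eqbdiff1} you are doing essentially what the paper does: rewrite $G_b(\bv_h,p^e)$ as a single integral over $\Gamma_h$ against the matrix $\bP_h-|\bB|\,\bP\bB^{-T}$ and estimate it with the $L^\infty$ bounds of Lemma~\ref{lemmaB}; the paper simply stops at $\Vert\bP_h-\bP\Vert_{L^\infty(\Gamma_h)}\lesssim h^k$ and $\Vert\bv_h\Vert_{L^2(\Gamma_h)}\lesssim\Vert\bv_h\Vert_A$, whereas your extra splitting of the $\bn\bn^T\bP_h$ part (using the penalty control of $\bv_h\cdot\bn$ and $\bn^T\gradGh p^e=(\bn-\bn_h)^T\gradGh p^e$) actually yields the sharper order $h^{k+1}$ for this term --- fine, but not needed. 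For the second bound your route is genuinely different from the paper's. The paper applies Green's formula patchwise (identity \eqref{aux1467}), compares $\divGh\bu^e$ with $|\bB|(\divG\bu)^e$ pointwise, and controls the co-normal jump term via the sharpened estimate $\Vert\bP[\nu_h]\Vert_{L^\infty(\mathcal E_h)}\lesssim h^{2k}$ together with an edge-trace bound $\Vert\bu^e\Vert_{L^2(\mathcal E_h)}\lesssim h^{-1/2}\Vert\bu\Vert_{H^2(\Gamma)}$; this is exactly where the hypothesis $\bu\in H^2(\Gamma)^3$ enters. You instead keep the gradient on $q_h$, write $G_b(\bu^e,q_h)=\int_{\Gamma_h}\bigl((\bB-|\bB|\,\bI)\bu^e\bigr)\cdot\gradG q_h^l(p(\cdot))\,ds_h$, and use the tangentiality $\bn\cdot\bu^e=0$ to upgrade $\bP_h\bu^e=\bu^e+O(h^k)$ to $(\bB-|\bB|\,\bI)\bu^e=O(h^{k+1})|\bu^e|$ (the dangerous term $\bP\bn_h(\bn_h-\bn)\cdot\bu^e$ is $O(h^{2k})$, and the $d\bH$ and $1-|\bB|$ contributions are $O(h^{k+1})$); paying the factor $h^{-1}$ from the inverse inequality \eqref{HH4} then lands exactly at $h^k$. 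Your argument checks out, avoids the edge machinery entirely, and in fact proves the stronger estimate $|G_b(\bu^e,q_h)|\lesssim h^k\Vert\bu\Vert_{L^2(\Gamma)}\Vert q_h\Vert_M$, so the $H^2$ regularity of $\bu$ is not needed for this term --- a small but real improvement over the paper's statement. The trade-off is that the paper's Green's-formula route keeps $\Vert\nabla q_h\Vert$ out of the estimate altogether (no inverse inequality on the pressure), which is the structure reused in the stability analysis of section~\ref{sectstab}, whereas your route leans on the discrete norm $\Vert q_h\Vert_M$ absorbing the $h^{-1}$ loss and so is specific to finite element pressures.
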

\begin{proof}
 For the first estimate  we use Lemma \ref{lemmaB} and get
\begin{equation*} \begin{split}
&\vert G_b(\bv_h,p^e) \vert =\vert b_h(\bv_h,p^e) - b(\bP\bv_h^l, p) \vert = \vert (\bv_h , \gradGh p^e)_{L^2(\Gamma_h)} - (\vert \bB \vert \bv_h , (\gradG p)^e)_{L^2(\Gamma_h)} \vert  \\
&= \vert (\bv_h , (\bP_h - \bP)\gradGh p^e)_{L^2(\Gamma_h)} + (\bv_h , \bP\bP_h\gradGh p^e)_{L^2(\Gamma_h)} \\
&\hspace*{4.8mm}- (\bv_h , \bP\bB^{-T}\bP_h\gradGh p^e)_{L^2(\Gamma_h)}  + ((1-\vert \bB \vert)\bv_h , \bP\bB^{-T}\gradGh p^e)_{L^2(\Gamma_h)} \vert \\
&\lesssim  \big(\Vert \bP_h - \bP \Vert_{L^{\infty}(\Gamma_h)} + \Vert \bP\bP_h - \bP\bB^{-T}\bP_h \Vert_{L^{\infty}(\Gamma_h)} + \Vert 1-\vert \bB \vert \Vert_{L^{\infty}(\Gamma_h)}\big) \Vert \bv_h \Vert_{L^2(\Gamma_h)} \Vert p \Vert_{H^1(\Gamma)}\\
&\lesssim  h^{k} \Vert \bv_h \Vert_{A} \Vert p \Vert_{H^1(\Gamma)}.
\end{split}
\end{equation*}
We now consider the second estimate  in \eqref{eqbdiff1}. We use Green's formula and Lemma~\ref{lemmaB}, and thus obtain, with $\mathcal{E}_h$ and
$\nu_h$ as in section~\ref{sectintpart}:
\begin{equation} \label{eqb1} \begin{split}
G_b(\bu^e, q_h) &= b_h(\bu^e, q_h) - b(\bP\bu, q_h^l) \\
&= \sum_{T \in \mathcal{T}_h^{\Gamma}} (\bu^e , \gradGh q_h)_{L^2(\Gamma_T)} - (\bu , \gradG q_h^l)_{L^2(\Gamma)} \\
&= \sum_{T \in \mathcal{T}_h^{\Gamma}} -(\divGh (\bP_h \bu^e) , q_h)_{L^2(\Gamma_T)} + \sum_{E \in \mathcal{E}_h} ([\nu_h]\cdot \bu^e,q_h)_{L^2(E)} \\
&\hspace*{4.8mm}+ (\divG \bu , q_h^l)_{L^2(\Gamma)} \\
&= \sum_{T \in \mathcal{T}_h^{\Gamma}} -(\divGh (\bP_h \bu^e) , q_h)_{L^2(\Gamma_T)} + \sum_{E \in \mathcal{E}_h} ([\nu_h]\cdot \bu^e,q_h)_{L^2(E)} \\
&\hspace*{4.8mm}+ (\vert \bB \vert (\divG \bu)^e , q_h)_{L^2(\Gamma_h)}.
\end{split}
\end{equation}
Note that $\divGh (\bP_h \bu^e) = \divGh \bu^e - (\bu^e \cdot \bn_h) \tr(\gradGh \bn_h)$ on $\Gamma_T$ and $(\divG \bu)^e = \tr((\bP\nabla \bu^e \bP)^e) = \tr(\bP\nabla \bu^e \bP_h \bB^{-1}\bP)$ on $\Gamma_h$ (by Lemma \ref{lemmaB}) holds. Hence, we have
\begin{equation*} \begin{split}
&\vert \bB \vert (\divG \bu)^e - \divGh \bu^e \\
&= \vert \bB \vert \tr(\bP\nabla \bu^e \bP_h \bB^{-1}\bP) - \tr(\bP_h \nabla \bu^e \bP_h) \\
&= (\vert \bB \vert - 1) \tr(\bP\nabla \bu^e \bP_h \bB^{-1}\bP) +  \tr(\bP\nabla \bu^e \bP_h \bB^{-1}\bP)  - \tr((\bP_h - \bP) \nabla \bu^e \bP_h) \\
&\hspace*{4.8mm}+ \tr( \bP \nabla \bu^e \bP_h (\bP - \bP_h)) - \tr( \bP \nabla \bu^e \bP_h \bP) \\
&= (\vert \bB \vert - 1) \tr(\bP\nabla \bu^e \bP_h \bB^{-1}\bP) +  \tr(\bP\nabla \bu^e (\bP_h \bB^{-1}\bP - \bP_h \bP)) \\
&\hspace*{4.8mm}- \tr((\bP_h - \bP) \nabla \bu^e \bP_h)+ \tr( \bP \nabla \bu^e \bP_h (\bP - \bP_h)).
\end{split}
\end{equation*}
Therefore, using Lemma \ref{lemmaB} we obtain for the sum of the first and last term on the right-hand side of equation \eqref{eqb1}
\begin{equation*} \begin{split}
&\Big|\sum_{T \in \mathcal{T}_h^{\Gamma}} -(\divGh (\bP_h \bu^e) , q_h)_{L^2(\Gamma_T)} + (\vert \bB \vert (\divG \bu)^e , q_h)_{L^2(\Gamma_h)} \Big| \\
&= \Big| \sum_{T \in \mathcal{T}_h^{\Gamma}} \Big(  -(\divGh \bu^e , q_h)_{L^2(\Gamma_T)} + (\vert \bB \vert (\divG \bu)^e , q_h)_{L^2(\Gamma_T)} \\
&\hspace*{4.8mm}+ ((\bu^e \cdot \bn_h ) \tr(\gradGh \bn_h) , q_h)_{L^2(\Gamma_T)}\Big) \Big|\\
&\lesssim \vert  (\vert \bB \vert (\divG \bu)^e - \divGh \bu^e , q_h)_{L^2(\Gamma_h)} \vert + \sum_{T \in \mathcal{T}_h^{\Gamma}} \vert ((\bu^e \cdot (\bn_h - \bn)) \tr(\gradGh \bn_h) , q_h)_{L^2(\Gamma_T)}\vert \\
&\lesssim (\Vert 1-\vert \bB \vert \Vert_{L^{\infty}(\Gamma_h)} + \Vert \bP_h\bB^{-1}\bP - \bP_h\bP \Vert_{L^{\infty}(\Gamma_h)} + \Vert \bP_h - \bP \Vert_{L^{\infty}(\Gamma_h)}) \Vert \bu \Vert_{H^1(\Gamma)} \Vert q_h \Vert_{M} \\
&\hspace*{4.8mm}+ \sum_{T \in \mathcal{T}_h^{\Gamma}} \Vert \bn_h -\bn  \Vert_{L^{\infty}(\Gamma_T)} \Vert \bu^e \Vert_{L^2(\Gamma_T)} \Vert q_h\Vert_{L^2(\Gamma_T)} \\
&\lesssim h^{k} \Vert \bu \Vert_{H^1(\Gamma)} \Vert q_h \Vert_{M} + \sum_{T \in \mathcal{T}_h^{\Gamma}} h^{k } \Vert \bu^e \Vert_{L^2(\Gamma_T)} \Vert q_h\Vert_{L^2(\Gamma_T)} \\
&\lesssim h^{k} \Vert \bu \Vert_{H^1(\Gamma)} \Vert q_h \Vert_{M}.
\end{split}
\end{equation*}
For the second term on the right-hand side of equation \eqref{eqb1} we  need a bound on the jump in the conormals across the edges $E$. Such a bound is derived in \cite[Lemma 3.5]{ORXimanum} for the case of a piecewise planar surface approximation. The arguments immediately extend to the higher order surface approximation $\Gamma_h$, resulting in the estimate
\begin{equation*}
	\Vert \bP [\nu_h]\|_{L^\infty(\mathcal{E}_h)} \lesssim h^{2k}.
\end{equation*}
Using \eqref{eqtraceestimate} and arguments similar to \eqref{ERT} we get
\begin{equation*}
 \|\bu^e\|_{L^2(\mathcal{E}_h)} \lesssim h^{-1} \|\bu^e\|_{H^2(\Omega_\Theta^\Gamma)} \lesssim h^{-\frac12} \|\bu\|_{H^2(\Gamma)}.
\end{equation*}
Using these estimates and the result \eqref{eqedgexi} we obtain
\begin{equation*}
\sum_{E \in \mathcal{E}_h} ([\nu_h]\cdot \bu^e,q_h)_{L^2(E)} \lesssim \|\bP [\nu_h]\|_{L^\infty(\mathcal{E}_h)} \|\bu^e\|_{L^2(\mathcal{E}_h)}   \Vert q_h\Vert_{L^2(\mathcal{E}_h)}
\lesssim h^{2k-1}  \Vert \bu \Vert_{H^2(\Gamma)} \Vert q_h \Vert_M,
\end{equation*}
which completes the proof for the second estimate in \eqref{eqbdiff1}.
\end{proof}

Applying Lemma \ref{lemconslaplace} and \ref{lemmabdifference} results in the following bounds for the consistency errors.

\begin{lemma} \label{lemmaconsistencyerror}
Let $(\bu, p)  \in \bV_T \times L^2_0(\Gamma)$ be the unique solution of problem \eqref{contform} and assume that $(\bu, p)  \in H^2(\Gamma)^3 \times H^1(\Gamma)$. We further assume that the data errors satisfy $\Vert \vert \bB\vert \bbf^e - \bbf_h \Vert_{L^2(\Gamma_h)} \lesssim h^{k+1} \Vert \bbf \Vert_{L^2(\Gamma)}$ and $\Vert \vert \bB\vert g^e - g_h \Vert_{L^2(\Gamma_h)} \lesssim h^{k+1} \Vert g \Vert_{L^2(\Gamma)}$. The following holds:
\begin{multline} \label{consistencyp1}
\sup_{(\bv_h,q_h)  \in \bU_h \times Q_h} \frac{\vert \mathcal{A}_h((\bu^e,p^e), (\bv_h,q_h)) - (\bbf_h, \bv_h)_{L^2(\Gamma_h)} + (g_h, q_h)_{L^2(\Gamma_h)} \vert}{\left(\Vert \bv_h \Vert_{A}^2 + \Vert q_h \Vert_{M}^2\right)^\frac{1}{2}} \\
\lesssim h^{k}\left(\Vert \bu \Vert_{H^2(\Gamma)} +  \Vert p \Vert_{H^1(\Gamma)}\right) + h^{k+1} \left(\Vert \bbf \Vert_{L^2(\Gamma)} + \Vert g \Vert_{L^2(\Gamma)}\right).
\end{multline}
\end{lemma}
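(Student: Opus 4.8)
The plan is to read the bound off from the algebraic decomposition \eqref{consterm1} of the consistency term, combined with the component estimates already established in Lemmas~\ref{lemconslaplace} and~\ref{lemmabdifference}. Fix $(\bv_h,q_h)\in\bU_h\times Q_h$. By \eqref{consterm1}, the expression inside the absolute value in \eqref{consistencyp1} equals
\[
G(\bu^e,\bv_h)+G_b(\bv_h,p^e)+G_b(\bu^e,q_h)-\tilde s_h(p^e,q_h)+G_f(\bv_h)+G_g(q_h),
\]
so it suffices to bound these six terms individually and add them.

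First I would apply Lemma~\ref{lemconslaplace}, noting that its hypotheses on $\bbf_h$ and $g_h$ are exactly the data-error assumptions imposed here; this yields
\[
|G(\bu^e,\bv_h)|\lesssim h^{k}\|\bu\|_{H^1(\Gamma)}\|\bv_h\|_A,\qquad |\tilde s_h(p^e,q_h)|\lesssim h^{k}\|p\|_{H^1(\Gamma)}\|q_h\|_M,
\]
together with $|G_f(\bv_h)|\lesssim h^{k+1}\|\bbf\|_{L^2(\Gamma)}\|\bv_h\|_{L^2(\Gamma_h)}$ and $|G_g(q_h)|\lesssim h^{k+1}\|g\|_{L^2(\Gamma)}\|q_h\|_{L^2(\Gamma_h)}$. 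Next I would apply Lemma~\ref{lemmabdifference} — the only place the assumed $H^2(\Gamma)$-regularity of $\bu$ is used — to obtain $|G_b(\bv_h,p^e)|\lesssim h^{k}\|\bv_h\|_A\|p\|_{H^1(\Gamma)}$ and $|G_b(\bu^e,q_h)|\lesssim h^{k}\|\bu\|_{H^2(\Gamma)}\|q_h\|_M$.

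To finish, I would convert the $L^2(\Gamma_h)$ factors in the data-error terms into the energy and pressure norms: $\|q_h\|_{L^2(\Gamma_h)}\le\|q_h\|_M$ is immediate from \eqref{defnorms}, and $\|\bv_h\|_{L^2(\Gamma_h)}\lesssim\|\bv_h\|_A$ is contained in Lemma~\ref{Ahscalarproduct}. Summing the six estimates and using $\|\bu\|_{H^1(\Gamma)}\le\|\bu\|_{H^2(\Gamma)}$, the numerator is bounded by
\[
\Big(h^{k}\big(\|\bu\|_{H^2(\Gamma)}+\|p\|_{H^1(\Gamma)}\big)+h^{k+1}\big(\|\bbf\|_{L^2(\Gamma)}+\|g\|_{L^2(\Gamma)}\big)\Big)\big(\|\bv_h\|_A+\|q_h\|_M\big).
\]
Dividing by $\big(\|\bv_h\|_A^2+\|q_h\|_M^2\big)^{1/2}$, estimating $\|\bv_h\|_A+\|q_h\|_M\le\sqrt2\,\big(\|\bv_h\|_A^2+\|q_h\|_M^2\big)^{1/2}$, and taking the supremum over $(\bv_h,q_h)\in\bU_h\times Q_h$ gives \eqref{consistencyp1}.

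I do not expect a substantial obstacle: all the genuine geometric-consistency work sits in Lemmas~\ref{lemconslaplace} and~\ref{lemmabdifference}, and what remains is the bookkeeping of combining the six contributions and passing to the product norm. The one mildly delicate point is the inequality $\|\bv_h\|_{L^2(\Gamma_h)}\lesssim\|\bv_h\|_A$, which is precisely where one checks that the weaker $L^2(\Gamma_h)$ factors in the $h^{k+1}$ data-error terms do not degrade the overall order; this follows from the coercivity estimates in \eqref{EstEll}.
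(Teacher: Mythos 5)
Your proposal is correct and follows exactly the route the paper takes: the paper's proof is the single sentence ``Applying Lemma~\ref{lemconslaplace} and~\ref{lemmabdifference} results in the following bounds,'' i.e.\ it combines the decomposition \eqref{consterm1} with the component estimates, just as you do. Your additional bookkeeping (converting $\Vert q_h \Vert_{L^2(\Gamma_h)}\le\Vert q_h\Vert_M$ and $\Vert \bv_h\Vert_{L^2(\Gamma_h)}\lesssim\Vert\bv_h\Vert_A$ via \eqref{EstEll}, then passing to the product norm) is exactly what the paper leaves implicit.
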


\subsection{Finite element error bound} \label{sectDiscrerror}

We combine the Strang-Lemma~\ref{stranglemma} and the bounds for the approximation error and the consistency error to obtain a bound for the discretization error in the energy norm.

\begin{theorem} \label{energyerrorboundph1}
Let $(\bu, p)  \in \bV_T \times L^2_0(\Gamma)$ be the unique solution of problem \eqref{contform} and assume that $(\bu, p)  \in H^2(\Gamma)^3 \times H^1(\Gamma)$. Let $(\bu_h, p_h) \in \bU_h \times Q_h$ be the unique solution of the discrete problem \eqref{discreteform1} with parameters as in  \eqref{choicerho}. We further assume that the data errors satisfy $\Vert \vert \bB\vert \bbf^e - \bbf_h \Vert_{L^2(\Gamma_h)} \lesssim h^{k+1} \Vert \bbf \Vert_{L^2(\Gamma)}$ and $\Vert \vert \bB\vert g^e - g_h \Vert_{L^2(\Gamma_h)} \lesssim h^{k+1} \Vert g \Vert_{L^2(\Gamma)}$. Then the following error bound holds:
	\begin{equation} \label{bound1} \begin{split}
\Vert \bu^e - \bu_{h} \Vert_{A} + \Vert p^e - p_{h} \Vert_{M} & \lesssim h^{k} \left(\Vert \bu \Vert_{H^{k+1}(\Gamma)} +  \Vert p \Vert_{H^{k}(\Gamma)} \right) \\
 &\qquad +h^{k+1} \left(\Vert \bbf \Vert_{L^2(\Gamma)} + \Vert g \Vert_{L^2(\Gamma)}\right).
	\end{split}
\end{equation}
\end{theorem}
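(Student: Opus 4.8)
The plan is to assemble Theorem~\ref{energyerrorboundph1} from the three preparatory results of this section: Strang's Lemma~\ref{stranglemma}, the approximation bound of Lemma~\ref{lemmaapproximationerror}, and the consistency bound of Lemma~\ref{lemmaconsistencyerror}; no new estimate is needed. First I would apply Lemma~\ref{stranglemma}, which is legitimate because the uniform inf-sup condition~\eqref{infsup} established in Section~\ref{sectstab}, together with the coercivity of $A_h$ on $\bU_h$ from Lemma~\ref{Ahscalarproduct}, gives the stability estimate~\eqref{eqinfsupAh} for $\mathcal{A}_h$, and because the assumed regularity guarantees $(\bu,p)\in\bV_{reg,h}\times V_{reg,h}$. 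This bounds $\Vert\bu^e-\bu_h\Vert_A+\Vert p^e-p_h\Vert_M$ by the best-approximation term plus the consistency term appearing on the right-hand side of~\eqref{eqStrang}.

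Second, I would bound the best-approximation term by Lemma~\ref{lemmaapproximationerror}, obtaining $h^k\big(\Vert\bu\Vert_{H^{k+1}(\Gamma)}+\Vert p\Vert_{H^k(\Gamma)}\big)$; one may assume $\bu\in H^{k+1}(\Gamma)^3$ and $p\in H^k(\Gamma)$, since otherwise~\eqref{bound1} holds vacuously, and for $k\ge2$ this regularity already implies the $H^2\times H^1$ hypothesis used below. Third, I would bound the consistency term by Lemma~\ref{lemmaconsistencyerror}, which gives $h^k\big(\Vert\bu\Vert_{H^2(\Gamma)}+\Vert p\Vert_{H^1(\Gamma)}\big)+h^{k+1}\big(\Vert\bbf\Vert_{L^2(\Gamma)}+\Vert g\Vert_{L^2(\Gamma)}\big)$. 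Since $k\ge2$, the continuous embeddings $\Vert\bu\Vert_{H^2(\Gamma)}\lesssim\Vert\bu\Vert_{H^{k+1}(\Gamma)}$ and $\Vert p\Vert_{H^1(\Gamma)}\lesssim\Vert p\Vert_{H^k(\Gamma)}$ allow me to absorb the $h^k$-terms of the consistency bound into the approximation contribution, and adding everything yields~\eqref{bound1}.

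In short, the proof of the theorem itself is a one-line combination of the cited lemmas; the substantive work lies in those lemmas, and the genuine obstacles have already been overcome. These are: the uniform (in $h$ and in the position of $\Gamma_h$ in the background mesh) inf-sup stability~\eqref{infsup}, proved in Section~\ref{sectstab} by reducing stability for $\Gamma_h$ to stability for $\Gammalin$, then via Verf\"urth's trick to an equivalent condition in a weaker norm, and finally by the explicit test-function construction~\eqref{VforQregA}; and the geometric-consistency estimate for $G_b(\bu^e,q_h)$ in Lemma~\ref{lemmabdifference}, which is the single place where a local Green's formula and the conormal-jump bound $\Vert\bP[\nu_h]\Vert_{L^\infty(\mathcal{E}_h)}\lesssim h^{2k}$ are required. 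Granting those, estimate~\eqref{bound1} is immediate.
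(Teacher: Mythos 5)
Your proposal is correct and follows exactly the route the paper intends: Theorem~\ref{energyerrorboundph1} is obtained by inserting the approximation bound of Lemma~\ref{lemmaapproximationerror} and the consistency bound of Lemma~\ref{lemmaconsistencyerror} into the Strang estimate~\eqref{eqStrang}, with the $h^k$ consistency terms absorbed via $\Vert\bu\Vert_{H^2(\Gamma)}\lesssim\Vert\bu\Vert_{H^{k+1}(\Gamma)}$ and $\Vert p\Vert_{H^1(\Gamma)}\lesssim\Vert p\Vert_{H^k(\Gamma)}$. Your identification of where the real work lies (the uniform inf-sup condition~\eqref{infsup} and the $G_b$ estimates of Lemma~\ref{lemmabdifference}) matches the paper's structure.
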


\section{Numerical experiments}\label{sectNumer}
Results of numerical experiments (for different surfaces $\Gamma$)  that confirm the optimal order of convergence of the trace Taylor--Hood finite method  for  $k=2$ and $k=3$ are presented in \cite{jankuhn2019higher}. These results show optimal convergence behavior, not only in the energy norm but also in the $L^2$-norm. In that paper, one can also find numerical results for an \emph{inconsistent variant} of the method in which an approximation $\bH_h$ of the Weingarten mapping is \emph{not} needed. In \cite{olshanskii2019inf}   results of a numerical experiment with $k=2$ are presented which illustrate that without the pressure normal stabilization term, i.e., using $\rho_p=0$, the trace Taylor--Hood pair is \emph{not} inf-sup stable.
Below we  present results of two further numerical experiments. In Section \ref{sectinfsup} we
numerically confirm the inf-sup stability of the trace Taylor--Hood pair $\bU_h \times Q_h$ for $k=2,3,4,5$. The results show that the (best) inf-sup constant is (in this $k$ range) essentially independent of $k$.
In Section \ref{sectKelvin}
we apply our method to the Kelvin-Helmholtz instability problem, which
illustrates the potential of the method.

\subsection{Inf-sup constant} \label{sectinfsup}
We consider the Stokes problem on the unit sphere, characterized as the zero level of
the distance function $\phi(x) = \sqrt{x_1^2+x_2^2+x_3^2} - 1$, $x=
(x_1,x_2,x_3)^T$.  The discretizaton \eqref{discreteform1} is implemented in  NGSolve
\cite{ngsolve} with the surface embedded in a domain $\Omega=(-\tfrac{5}{3},\tfrac{5}{3})^3$, a coarsest mesh-size of $h_0 = 0.5$ and several uniform refinements (only of tetrahedra intersected by the surface).
We use parameter values $\rho_u=h^{-1}$, $\rho_p=h$, $\eta=h^{-2}$. The resulting
discrete saddle point problem  and its pressure Schur complement are of the form
\[
\mathbf{ \mathcal{A}}:=\left[\begin{matrix}
\bA & \bB^T \\
\bB & -\bC
\end{matrix}\right],\quad \bS=\bB \bA^{-1} \bB^T + \bC.
\]
Let $\bM$ be the symmetric positive definite matrix corresponding to the scalar product that induces the norm $\|\cdot\|_M$ used in the pressure space $Q_h$, cf. \eqref{defnorms}. We consider the generalized eigenvalue problem
\begin{equation*}
\begin{split}
\bS \vec p = \lambda \bM \vec p.
\end{split}
\end{equation*}
The smallest strictly positive  eigenvalue, denoted by $\lambda = \lambda_{\min}$ is related to the best possible inf-sup constant in \eqref{infsup}  through $\frac12 c_0^2 \leq \lambda_{\min} \leq 2 c_0^2$. For the
computation of the eigenvalues we use SciPy \cite{SciPy}.  Further details concerning this eigenvalue computation are given in \cite{olshanskii2019inf}.
 In Table~\ref{table2} we show computed $\lambda_{\min}$ values for several grid refinements and polynomial degree $k=2,\ldots,5$. (Due to computational limitations the last entries in the fifth and sixth column are not included).
\begin{table}[ht!]
  \centering
 \begin{tabular}{|c||c|c|c|c|c|}
\hline
 \textit{l} & $\vect P_1$--$P_{1}$ & $\vect P_2$--$P_{1}$ & $\vect P_3$--$P_{2}$ & $\vect P_4$--$P_{3}$ & $\vect P_5$--$P_{4}$ \\ \hline
1 & 0.84227 & 0.98940 & 0.98996 & 0.98999 & 0.99045 \\
2 & 0.73001 & 0.98312 & 0.98247 & 0.98346 & 0.98534 \\
3 & 0.65410 & 0.97322 & 0.97363 & 0.97583 & 0.97798 \\
4 & 0.52795 & 0.96089 & 0.96563 & 0.96595 & 0.96923 \\
5 & 0.39170 & 0.94002 & 0.93990 & 0.93486 & -- \\
6 & 0.27037 & 0.94585 & 0.94670 & -- & -- \\ \hline
 \end{tabular}
\caption{Smallest strictly positive eigenvalue $\lambda_{\min}$.} \label{table2}
\end{table}

As predicted by the theoretical analysis, the eigenvalue $\lambda_{\min}$ remains bounded away from zero as the grid is refined. We also observe that $\lambda_{\min}$ remains essentially constant if one increases $k$. This robustness property does not follow from our analysis. In the second column of the table we show the result for the $\vect P_1$--$P_{1}$ pair of trace finite element spaces (with $P_1$ approximation of the surface). The results  indicate  that, as expected,  this
pair is not inf-sup stable if we use (only) the normal derivative pressure stabilization $\tilde s_h(\cdot,\cdot)$. If one uses an additional Brezzi-Pitk\"aranta type stabilization this pair becomes inf-sup stable, as is shown in \cite{olshanskii2018finite}.

\subsection{Kelvin--Helmholtz instability on a sphere} \label{sectKelvin}
To demonstrate the performance of the method under more general circumstances not covered by the presented analysis, we further consider a classical problem of the Kelvin--Helmholtz instability  in a mixing layer of isothermal incompressible viscous flow at high Reynolds number.
For a detailed discussion of the problem in a 2D periodic square, which can be seen as a planar analogue of our setup, we refer to~\cite{schroeder2019reference} and the references therein. There are almost no numerical studies of Kelvin--Helmholtz instability for surface fluids; examples of a cylinder and a sphere are treated in ~\cite{lederer2019divergence}, where a higher order $H$(div)-conforming finite element method is applied on triangulated surfaces. We follow that paper to design  our numerical experiment.

For~$\Gamma=S^2$ , let $\xi$ and $\zeta$ to be {renormalized} azimuthal and polar coordinates, respectively: $-1/2 \le \xi, \zeta < 1/2$. The corresponding directions are~$\vect e_\xi \coloneqq \nabla_{\Gamma}\xi/\|\nabla_{\Gamma}\xi\|$ and $\vect e_\zeta \coloneqq \nabla_{\Gamma}\zeta/\|\nabla_{\Gamma}\zeta\|$. Consider the initial velocity field
\begin{align}\begin{split} \label{kh:ini}
	{\vect u}_0(\xi, \zeta) &\coloneqq \tanh(2\,\zeta/\delta_0)\,r(\zeta)\,\vect e_\xi + c_n\vCurl_\Gamma\psi, \\
	\psi(\xi, \zeta) &\coloneqq e^{-(\zeta/\delta_0)^2}\,\big(a_a\cos(m_a\,\pi\,\xi) + a_b\cos(m_b\,\pi\,\zeta)\big),
\end{split}\end{align}
where $r$ is the distance from $\Gamma$ to the $z$-axis. We take $\delta_0 \coloneqq 0.05$ (for $|z| \gtrsim \delta_0$ the velocity field is close to a rigid body rotation around the $z$-axis), $c_n \coloneqq 10^{-2}$ (perturbation parameter), and $a_a = 1$, $m_a = 16$, $a_b = 0.1$, $m_b = 20$ (perturbation magnitudes and frequencies). Note that ${\vect u}_0$ is tangential by construction, ${\vect u}_0\cdot\bn=0$. The initial velocity field is illustrated in Figure~\ref{fig:kh:ini}.

\begin{figure}[ht!]
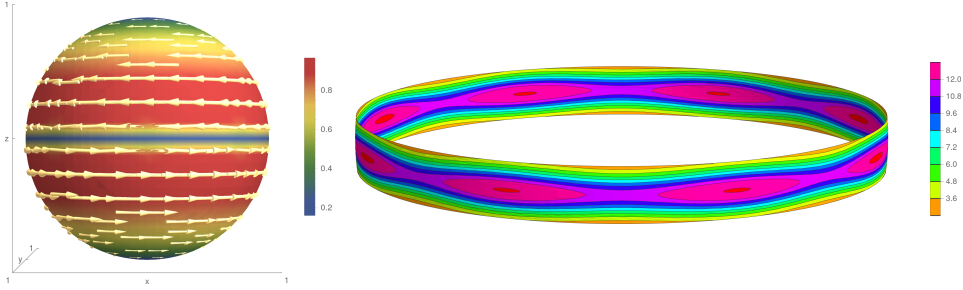

	\par\bigskip
	\centering
	\begin{subfigure}{.35\linewidth}
		\centering
		\includegraphicsw[.95]{{kh_ini_u_8.cropped}.png}
	\end{subfigure}%
	\begin{subfigure}{.65\linewidth}
		\centering
		\includegraphicsw[.95]{{kh_ini_w_8.cropped}.png}
	\end{subfigure}%
	\caption{Left: Initial velocity field~$\vect u_0$ from \eqref{kh:ini}. Right: The initial vorticity, $\Curl_\Gamma \vect u_0$, in the strip $|z| < 2\,\delta_0$. We see that the initial perturbation consists of 8 vortices squeezed around equator.}
	\label{fig:kh:ini}		
\end{figure}

Compared to the surface Stokes problem~\eqref{eqstrong}, the surface Navier--Stokes equations considered in this experiment are time-dependent and include inertia terms:
\begin{align}\begin{split}\label{split:tang}
	\vect P \frac{\mbox d\vect u}{\mbox d t} - 2\nu\,\vect P\divG(E(\vect u)) + \nabla_\Gamma p &= \vect 0,\\
	\divG \vect u &= 0,
\end{split}\end{align}
where~$\frac{ {\rm d} \vect u}{{\rm d} t} = \frac{\partial {\vect u}}{\partial t} + (\vect u\cdot\nabla)\vect u$ is the material derivative.  For the unit sphere and initial condition such that $\|\vect u\|_{\LInfSpace} \simeq 1$,  we have a Reynolds number~$\text{Re} \simeq \nu^{-1}\delta_0$. In our numerical tests we set $\nu = \frac12 10^{-5}$, resulting in $\text{Re}=10^4$.

We note that equations~\eqref{split:tang} follows by tangential projection of a fluid system governing the evolution of a viscous material layer under the assumption of vanishing radial motions; see~\cite{Jankuhn1}.  The operator $\vect P \frac{{\rm d}\vect u}{{\rm d} t}$ can be seen as covariant material derivative. One checks the identity  $\vect P \frac{{\rm d}\vect u}{{\rm d} t} = \frac{\partial\vect u}{\partial t}+(\nabla_\Gamma \bu)\bu$ for a tangential vector field $\bu$, which we further use in the finite element formulation.

We outline the discretization approach used for the simulation of this surface Navier--Stokes problem. The  trace $\vect P_2$--$P_1$ Taylor--Hood finite element method as described in this paper, cf. \eqref{discreteform1},  is applied for the spatial discretization. Discretization parameters were chosen as ~$\rho_p = h$, $\rho_u = h^{-1}$, and~$\eta = h^{-2}$, cf.  \eqref{choicerho}. We use the BDF2 scheme to approximate  $\frac{\partial\vect u}{\partial t}$
and linearize the inertia term at  $t^n$ as $(\nabla_\Gamma \bu(t^n))\bu(t^n)\approx(\nabla_\Gamma \bu(t^n))\bw$, where $\bw$ is the linear extrapolation of velocity fields from two previous time nodes, $t^{n-1}$ and $t^{n-2}$. The grad-div stabilization term~\cite{olshanskii2004grad}, $\gamma\int_{\Gamma^h}\Tr E(\vect u)\Tr E(\vect v)\,ds$ with $\gamma=1$, is added to the finite element formulation to better enforce divergence free condition. This stabilization also facilitates the construction of preconditioners for the resulting algebraic systems~\cite{heister2013efficient}.
No further stabilizing terms, e.g., of streamline diffusion type, were included in the method, since the computed solution does not reveal any spurious modes.

The method is implemented in the DROPS package \cite{DROPS}.
For this series of experiments,  an initial triangulation $\T_{h_0}$  was build by  dividing $\Omega=(-\frac53,\frac53)^3$ into $2^3$ cubes and further splitting each cube into 6 tetrahedra with  $h_0 = \frac53$. Further, the mesh is  refined only close to the surface, and $\ell \in \mathbb{N}$ denotes the level of refinement so that $h_\ell = \frac53\,2^{-\ell}$.

\begin{figure}[ht!]
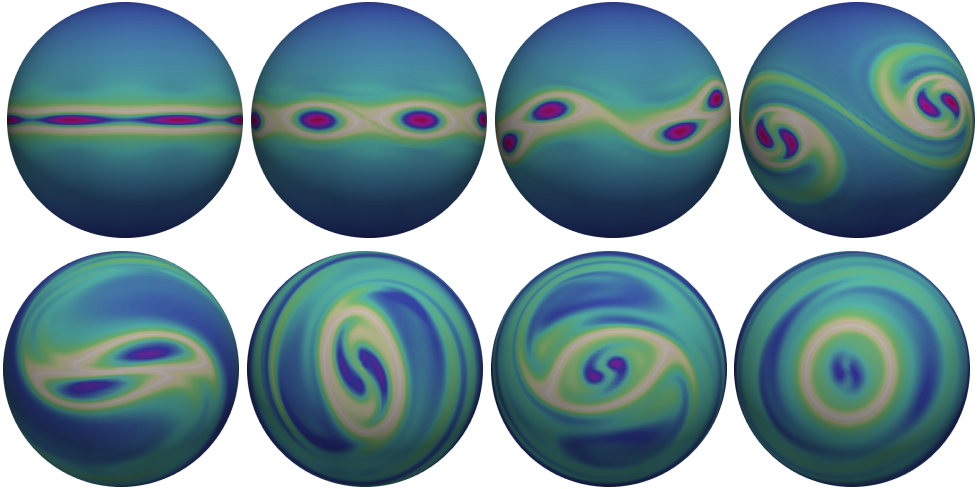

	\centering
\href{https://youtu.be/EwF3vCgFhuI}{
		\includegraphicsw[.24]{{kh_t=0.cropped}.png}
		\includegraphicsw[.24]{{kh_t=2.5.cropped}.png}
		\includegraphicsw[.24]{{kh_t=5.cropped}.png}
		\includegraphicsw[.24]{{kh_t=6.25.cropped}.png}
\vskip1ex
		\includegraphicsw[.24]{{kh_t=10.cropped}.png}
		\includegraphicsw[.24]{{kh_t=12.5.cropped}.png}
		\includegraphicsw[.24]{{kh_t=15.cropped}.png}
		\includegraphicsw[.24]{{kh_t=20.cropped}.png}
              }
	\caption{Snapshots of surface vorticity~${w_h = \Curl_{\Gamma_h}\vect u_h}$ for~${t \in \{ 0, 2.5, 5, 6.25, 10, 12.5, 15, 20 \}}$, ${h = 2.6\times 10^{-2}}$. Click any picture for a full animation.}
	\label{fig:kh:curl}		
\end{figure}

We perform numerical simulations for mesh levels $\ell = 4, 5, 6$. The DROPS  package currently does not support parametric elements, so for a sufficiently accurate numerical integration we use a piecewise linear approximation of $\Gamma_{h}$  with $m_\ell$ levels of local refinement, where $m_4=2$, $m_5=4$, $m_6=8$; see section~6.3 in \cite{olshanskii2019inf} for further details. The time interval is fixed to be~$\lbrack0,  20\rbrack$. We use uniform time stepping with $\Delta t=1/16$, $1/32$ and $1/64$ for mesh levels $4,5$ and $6$, respectively.

Figure~\ref{fig:kh:curl} shows several snapshots of the surface vorticity, $w_h = \mbox{curl}_\Gamma\bu_h$, computed on the finest mesh level 6.  The trace $\mathbf{P}_2$--$P_1$  finite element method that we use reproduces qualitatively correct flow dynamics that follows the well known pattern of the planar Kelvin--Helmholtz instability development: we see the initial vortices formation in the layer followed by pairing and self-organization into two large counter-rotating vortices. Conservation of the initial zero angular momentum prevents further pairing. The two remaining vortices  should decay for $t\to+\infty$ due to energy dissipation.

We next assess the method by monitoring the energy dissipation of the computed solutions on three subsequent levels. To have a better insight into the expected behaviour, we note that the initial velocity $\bu_0$ is $L^2$-orthogonal to all rigid tangential motions of $\Gamma$, functions from $E = \{\bv\in\bV_T\,:\,E(\bv)=0\}$. It is straightforward to check that a velocity field $\bu$ that solving \eqref{split:tang}  preserves this orthogonality condition for all $t>0$ and hence it satisfies the following Korn inequality:
\begin{equation}\label{Korn2}
\|\vect u\|_{\LTwoSpace} \le C_K(\Gamma)\,\|E(\vect u)\|_{\LTwoSpace}.
\end{equation}
For the total kinetic energy  $\KinEn(t) = \frac 12 \|\vect u(\cdot, t)\|^2_{\LTwoSpace}$, testing    \eqref{split:tang} with $\vect v = \vect u$ and applying \eqref{Korn2} leads to the following identity and a corresponding energy bound:
\begin{equation*}
	\frac{\diff{\KinEn(t)}}{\diff{t}} = -2\nu\,\|E(\vect u(t))\|^2_{\LTwoSpace} \le -\frac{4\,\nu}{C_K^2(\Gamma)}\,\KinEn(t)\quad\Longrightarrow~\KinEn(t)\le \KinEn(0)\exp\left(-\frac{4\,\nu\,t}{C_K^2(\Gamma)}\right).
\end{equation*}
We outline an approach for estimating the Korn constant $C_K(\Gamma)$. The best  value of this constant is obtained if  $C_K(\Gamma)^{-2}$ is the smallest strictly positive eigenvalue of the diffusion operator $-\bP \divG (E(\cdot))$ restricted to the space of tangential divergence free vector fields, cf. \eqref{Korn2}.
We have the following relation between this surface diffusion operator and the Hodge-de Rham operator $\Delta^H_\Gamma$ (see, eq.~(3.18) in \cite{Jankuhn1}):
\begin{equation}\label{aux1328}
-2\bP \divG (E(\bv))=\Delta^H_\Gamma\bv- 2K\bv,\quad\text{for}~\bv\in\bV_T,~\text{s.t.}~\divG\bv=0,
\end{equation}
where $K$ is the Gauss curvature ($K=1$ for $\Gamma=S^2$). The eigenvalues of $\Delta^H_\Gamma$ for the unit sphere are given by $\lambda_k(\Delta^H_\Gamma)=k(k+1)$, $k=1,2,\dots$,~\cite[p.349]{chow2007ricci}. The tangential rigid motions are eigenfunctions corresponding to $\lambda_1$.  Hence, we estimate:
\[
\begin{split}
C_K(\Gamma)^{-2}&=\inf_{\bv\in\bV_T/E\atop \divG\bv=0 }\frac{\|E(\bv)\|_{L^2(\Gamma)}^2}{\|\bv\|_{L^2(\Gamma)}^2}
=\inf_{\bv\in\bV_T/E\atop \divG\bv=0 }\frac{\frac12\langle\Delta^H_\Gamma\bv- 2K\bv,\bv\rangle}{\|\bv\|_{L^2(\Gamma)}^2} \\ &\ge
\inf_{\bv\in\bV_T/E}\frac{\frac12\langle\Delta^H_\Gamma\bv- 2K\bv,\bv\rangle}{\|\bv\|_{L^2(\Gamma)}^2} = \frac12(\lambda_2(\Delta^H_\Gamma)-2)=2,
\end{split}
\]
resulting in   $C_K(\Gamma)^2\le\frac12$\footnote{Results of numerical experiments (not included), strongly suggest that $C_K(\Gamma)^2=\frac12$ for $\Gamma=S^2$.}.
Substituting this in the above estimate for the kinetic energy, we arrive at the bound
\begin{equation}\label{kin_decay}
	\KinEn(t)\le \KinEn(0)\exp\left(-8\nu\,t\right)=\KinEn(0)\exp\left(-4 \cdot 10^{-5}\,t\right) .
\end{equation}

\begin{figure}[h]
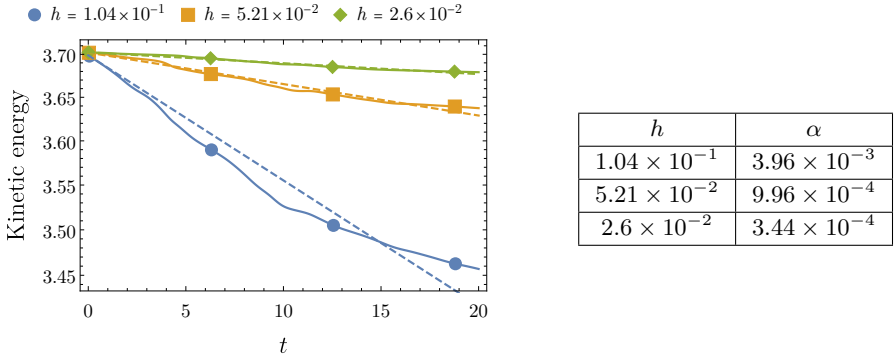

	\centering
	\begin{subfigure}{.5\linewidth}
		\centering
		\includegraphicsw{kh_kinetic_energy.pdf}
	\end{subfigure}%
	\begin{subfigure}{.5\linewidth}
		\centering\small
		\begin{tabular}[1.2]{|c|c|}
			\hline
			$h$ & $\alpha$  \\
			\hline
			$1.04\times 10^{-1}$ & $3.96\times 10^{-3}$ \\
			\hline
			$5.21\times 10^{-2}$ & $9.96\times 10^{-4}$ \\
			\hline
			$2.6\times 10^{-2}$  & $3.44\times 10^{-4}$ \\
			\hline
		\end{tabular}
	\end{subfigure}%
	\caption{Left: Numerical kinetic energies~$\mathcal{E}_h(t)=\frac 12 \|\vect u_h(\cdot, t)\|^2_{L^2(\Gamma_h)}$ as functions of time for~$\ell=4,5,6$ (straight lines) and corresponding exponential fitting  (dashed lines). Right: Values of the exponent $\alpha$ in the fitting function $C\exp(-\alpha t)$.}
	\label{fig:kh:kinetic}		
\end{figure}

In Figure~\ref{fig:kh:kinetic} we show the kinetic energy plots for the computed solutions together with exponential fitting. There are two obvious reasons for the computed energy to decay faster than the upper estimate \eqref{kin_decay} suggests: the presence of numerical diffusion and the persistence of higher harmonics in the true solution. On the finest mesh the numerical solution looses about  $0.5\%$ of kinetic energy up to the point when the solution is dominated by two counter-rotating vortices.
This compares well to results computed with a higher order method in \cite{schroeder2019reference} for the planar case with $Re=10^4$.

\subsection*{Acknowledgment} The authors Th. Jankuhn and A. Reusken wish to thank the German Research Foundation (DFG) for financial support within the Research Unit ``Vector- and tensor valued surface PDEs'' (FOR 3013) with project no. RE 1461/11-1. M.O. and A.Zh. were partially supported by NSF through the Division of Mathematical Sciences grant 1717516.

\bibliographystyle{siam}
\bibliography{main}{}

\end{document}